\def\pr{\textup{ P\/}}
\def\ex{\textup{E\/}}
\def\eps{\varepsilon}
\def\part{\partial}
\newcommand{\beq}{\begin{equation}}
\newcommand{\eeq}{\end{equation}}
\newtheorem{Theorem}{Theorem}[section]
\newtheorem{Lemma}[Theorem]{Lemma}
\newtheorem{Corollary}[Theorem]{Corollary}
\theoremstyle{remark}
\numberwithin{equation}{section}
\date{\today}
\begin{document}

\title[Stable partitions]{On random stable partitions}

\author{Boris Pittel}
\address{Department of Mathematics, The Ohio State University, Columbus, Ohio 43210, USA}
\email{bgp@math.ohio-state.edu}

\keywords
{stable matchings, partitions,  random preferences, asymptotics }

\subjclass[2010] {05C30, 05C80, 05C05, 34E05, 60C05}

\begin{abstract} The stable roommates problem 
does not necessarily have a solution, i.e. a stable matching. We had found that, for the uniformly random instance, the expected number of solutions converges
to $e^{1/2}$ as $n$, the number of members, grows, and with Rob Irving we proved that the 
limiting probability of solvability  is $e^{1/2}/2$, at most. Stephan Mertens's extensive numerics compelled him to conjecture that this probability is of order $n^{-1/4}$.  Jimmy Tan introduced a  notion of a stable cyclic partition, and proved existence of such a partition for every
system of members' preferences, discovering  that presence of odd cycles in a stable partition is equivalent to absence of a stable matching. In this paper we show that the expected number of stable partitions with odd cycles grows as $n^{1/4}$. However the standard deviation of that number is  of order $n^{3/8}\gg n^{1/4}$, too large to conclude
that the odd cycles exist with high probability (whp). Still, as a byproduct, we show 
that whp the fraction of members with more than one stable ``predecessor'' is of order
$n^{-1/4}$. Furthermore, whp the average rank of a predecessor in every stable partition is of
order $n^{1/2}$. The likely size of the largest stable matching is $n/2-O(n^{1/4+o(1)})$,
and  the likely number of pairs of unmatched members blocking the optimal complete matching 
is $O(n^{3/4+o(1)})$.
\end{abstract}

\maketitle

\section{Introduction and main results} 
A  roommates problem instance is specified by an even integer $n$, number of members, and for each $i$ ($1\le i\le n)$ a permutation $\sigma_i$ of the set $[n]=\{1,2,\dots, n\}$ in
which $i$ itself occupies position $n$, ($\sigma_i(n)=i$). The permutation $\sigma_i$ forms the preference list of person $i$: $\sigma_i(k)=j$ if person $j$ occupies position $k$ in the preference list
of person $i$, and each person $i$ is a the end of their own preference list. Equivalently,
the instance can be specified by the ranking list $R_i$ of each person $i$, defined as the inverse permutation of $\sigma_i$: $R_i(j)=k$ if the person $j$ is the $k$-th best for person $i$.

For a given roommates instance with $n$ members, a stable permutation (cyclic partition) is a permutation $\bold\Pi$ of 
$[n]$ such that:
\begin{equation}\label{Tan1,2}
\begin{aligned}
&(1)\, \forall\,i\in [n]: \,\,R_i\bigl(\bold\Pi(i)\bigr)\le R_i\bigl(\bold\Pi^{-1}(i)\bigr);\\
&(2)\, \forall\, 1\le i\neq j\le n:\,\, R_i(j)<R_i\bigl(\bold\Pi^{-1}(i)\bigr)\Longrightarrow R_j(i)>R_j\bigl(\bold\Pi^{-1}(j)\bigr).
\end{aligned}
\end{equation}
Viewing $\bold\Pi$ in terms of its cyclic decomposition, we will refer to $\bold\Pi(i)$ and $\bold\Pi^{-1}(i)$ as the successor of $i$ and the predecessor of $i$ in the permutation $\bold\Pi$. Then condition (1) states no person prefers his predecessor to his successor, and condition (2) states that no two
mutually-unaligned members prefer each other to their predecessors. 
Note that equality in condition (1) is
possible iff $\bold\Pi^2(i)=i$, i.e. either $i$ is a fixed point of $\bold\Pi$, or $(i,\bold\Pi(i))$
is a transposition in $\bold \Pi$---in this case we say that $(i,\bold\Pi(i))$ forms a pair in the
partition $\bold\Pi$. Thus inequality (1) is not vacuous iff $i$ is in a cycle of length $3$ or more, in
which case it is strict. Also if $i$ is a fixed point, then $R_i(\bold\Pi^{-1}(i))=R_i(i)=n$; so condition (2) implies that there are no other fixed points, and every $j\neq i$ prefers his own predecessor to $i$.
Intuitively, each member $i$ proposes to $\bold\Pi(i)$ and holds a proposal from $\bold\Pi^{-1}(i)$. 

Clearly, if a stable partition $\bold\Pi$ is such that it has cycles of length $2$ only, then $\bold\Pi$ is
a stable matching. However, while for every even $n\ge 4$ there are instances without a stable
matching, Tan \cite{Tan}, who introduced the notion of a  cyclic partition $\bold\Pi$, proved that,
for every instance of preferences, (1) there is at least one stable permutation; (2) all stable permutations have the same odd cycles (``parties''); (3) replacing each even cycle $(i_1,i_2,\dots,i_{2m})$ of a stable permutation by the transpositions $(i_1,i_2),\dots,(i_{2m-1},i_{2m})$, or by the transpositions $(i_2,i_3),\dots, (i_{2m},
i_1)$ we get another stable, {\it reduced\/}, permutation;  (4) thus a stable matching exists iff there are no odd cycles. 

Suppose that the random problem instance, call it $I_n$, is chosen uniformly at random among all $[(n-1)!]^n$ instances. We showed \cite{Pit2} that the expected number of stable matchings is
$e^{1/2}$ in the limit, implying that the number of stable matchings, if any exist, is bounded in
probability. With Robert Irving \cite{IrvPit} we proved that the probability that a stable matching exists is at most $e^{1/2}/2<1$ in the limit. In a pleasing contrast, the stable partitions do not have a fixed point (odd party of size $1$) with surprisingly high probability $\ge 1- O\bigl(e^{-\sqrt{n}}\bigr)$. So while a stable matching may not exist, stable partitions (that exist always) with high probability
have no ``pariahs'': every member holds a proposal from another member,  while his own proposal is 
accepted by possibly a different  member.

Our task is to analyze asymptotic behavior of a series of leading parameters of
the family of stable (reduced) partitions for $I_n$, and we focus on those that have no fixed point. Among those parameters are 
$\mathcal S_n$ and $\mathcal O_n$, the total number of stable (reduced) partitions and the total number of  ``parties'', i. e. odd (common to all those partitions) cycles. We will prove, for instance, that 
\begin{align}
\ex\bigl[\mathcal S_n\bigr]&=(1+o(1)) \frac{\Gamma(1/4)}{\sqrt{\pi e}\,2^{1/4}}\,n^{1/4},\label{1}\\
\ex\bigl[\mathcal O_n\bigr]&\le(1+o(1)) \frac{\Gamma(1/4)}{4\sqrt{\pi e}\,2^{1/4}}\,n^{1/4}\log n. \label{2}
\end{align}
The fact that $\ex\bigl[\mathcal S_n\bigr]\to\infty$, but at a moderate rate, can be charitably viewed as supporting the claim that, in probability, $\mathcal S_n\to\infty$; if this is the case then 
 with high probability $I_n$ has no stable matching. Numerical experiments conducted
by Stephan Mertens \cite{Mer} made him conjecture that solvability probability goes to zero, as fast
as $n^{-1/4}$. For a rigorous transition from $\ex\bigl[\mathcal S_n\bigr]\to\infty$ to
$\mathcal S_n\to\infty$, one would normally want to show that $\text{Var}(\mathcal S_n)\ll \ex^2\bigl[\mathcal S_n\bigr]$. It turns out, however, that $\text{Var}(\mathcal S_n)$ is of order $n^{3/4}$,
thus exceeding $\ex^2\bigl[\mathcal S_n\bigr]$ by the factor $n^{1/8}$, which invalidates this naive two-moment approach. Can the approach be gainfully modified by narrowing the pool of stable
partitions? 

A key tool for estimating $\text{Var}(\mathcal S_n)$ is an asymptotic formula for
the probability that each of two generic (reduced) partitions $\bold\Pi_1$ and $\bold\Pi_2$ (with the same
odd parties, of course) are stable.  The symmetric difference of the set of matched pairs in $\bold\Pi_1$ and the set of matched pairs in $\bold\Pi_2$ is the edge set of the disjoint even cycles of
length $\ge 4$, whose edges are the matched pairs in $\bold\Pi_1$ interlacing the matched pairs in $\bold\Pi_2$. Each such cycle can be viewed as an even {\it rotation\/} in both partitions, so that
the pair $(\bold\Pi_1,\bold\Pi_2)$ gives rise to $2^{\mu}$ of stable partitions, with $\mu$ being the
total number of those even cycles. Define a random graph $G_n=(\mathcal V_n, \mathcal E_n)$, where $\mathcal V_n$ is the set
of all stable partitions $\bold\Pi$, and $\mathcal E_n$ is the set of pairs $(\bold\Pi_1,\bold\Pi_2)$, each giving
rise to a {\it single\/} even cycle. By \eqref{1}, $\ex[\mathcal V_n]=\ex[\mathcal S_n]$ is of order $n^{1/4}$.
It turns out that $\ex[\mathcal E_n]$ is of order $n^{1/4}$ as well. What, if anything, does this fact tell about the
likely range of $\mathcal S_n$?

There are two positive results that stem from \eqref{1}--\eqref{2}. Tan \cite{Tan}, \cite{Tan1} defined
a maximum stable matching for an instance $I$ as a maximum-size matching $M=M(I)$ which is 
{\it internally\/} stable, i.e. not blocked by any two members from the agent (vertex) set of $M$. He
proved that $|M(I)|=(n-\mathcal O(I))/2$. It follows from \eqref{2} that 
\[
\pr\Biggl(|M(I_n)|\ge \frac{n-\omega(n) n^{1/4}\log n }{2} \Biggr)\ge 1-O(\omega(n)^{-1}) \to 1,
\]
for $\omega(n)\to \infty$, however slowly. In short, the number of members not in the maximum stable
matching is $O_p(n^{1/4}\log n)$. 

Abraham, Bir\'o and Manlove \cite{AbrBirMan} introduced the
alternative notion of  a ``maximally stable'' matching, i.e. a matching $M$ on $[n]$ that is blocked by
the smallest number of pairs, call it $B(I)$, of agents unmatched in $M$. They obtained a two-sided
bound for $B(I)$ in terms of preference lists lengths and the odd cycles. A cruder version of the ABM upper
bound states that $B(I)\le d(I) \mathcal O(I)$, where $d(I)$ is the length of the longest preference list.
Extending our approach, we will show that for the random instance $I_n$, with probability $\ge
1-\exp(-c(\log n)^{2(1+\delta)})$, every member's predecessor is among their best $n^{1/2}(\log n)^{1+\delta} $ choices. So we can apply the last bound with $d(I_n)=n^{1/2}(\log n)^{1+\delta} $. Therefore the bound \eqref{2} together with the ABM bound imply that with high probability there
exists a complete matching which is blocked by $n^{3/4+o(1)}$ pairs, a strikingly small number relative to the total number ($\Theta(n^2)$) of potential blocking pairs.

We will also show that with high probability the sum of the ranks of predecessors in every stable partition
is asymptotic to $n^{3/2}$; consequently the worst predecessor's rank in every stable partition is $n^{1/2}(1-o(1))$ at least, nearly matching $n^{1/2}(\log n)^{1+\delta}$, the likely upper bound. 

Here is an application. Suppose we shrink every member's preference list to their own best $d$ choices. If the constrained instance has no fixed point then neither does the full-lists instance. Consider an instance
$I_{n,d}$  of the stable partition problem chosen uniformly at random among all instances with some $d$ acceptable choices for every member. Randomly, and independently, ordering the remaining $n-1-d$ members for every
member, we will get the uniformly random (full-lists) instance $I_n$. It follows then that if $d\le (1-\eps)n^{1/2}$ ($d\ge n^{1/2}(\log n)^{1+\delta}$ resp.) then with high probability stable partitions for $I_{n,d}$
have (do not have resp.) a fixed point.

Finally, we use the analysis of $\text{Var}(\mathcal S_n)$ to show that the expected fraction of members with multiple stable predecessors is of order $n^{-1/4}$.  

\section{Integral formulas  for stability probabilities}\label{Intforms}
At the core of our proofs are two integral formulas, one for the probability that a generic cyclic
partition is stable, another for the probability that two generic cyclic partitions are stable.
\begin{Lemma}\label{P(partstab)=} Let $\bold\Pi$ be a permutation
of $[n]$ with even cycles of length $2$ only, and possibly a single fixed point $h^*$, i. e. $\bold\Pi(h^*)=
h^*$. Let $\text{Odd}\,(\bold\Pi)$ be the set of all elements from the odd cycles of $\bold\Pi$ with an exception of the fixed point if it is present. Let $D(\bold\Pi)$ be the set of 
unordered pairs $(i\neq j)$ such that $i=\bold\Pi(j)$ or, not exclusively, $j=\bold\Pi(i)$. Then
\begin{equation}\label{p(Pstable)=}
\begin{aligned}
&\pr(\bold\Pi):=\pr\bigl(\bold\Pi\text{ is stable}\bigr)=\idotsint\limits_{\bold x\in [0,1]^{n-1}}F(\bold x)\,d\bold x,\\
F(\bold x)&:=\prod_{h\in \text{Odd}\,(\bold\Pi)}\!\!\!\!\!\!x_h\,\,\cdot\prod_{(i,j)\notin D(\bold\Pi)}\!\!(1-x_ix_j)
\cdot\prod_{k\neq h^*}(1-x_k);
\end{aligned}
\end{equation}
if there is no fixed point $h^*$, then the third product  is replaced by $1$, and $[0,1]^{n-1}$ by $[0,1]^n$.
\end{Lemma}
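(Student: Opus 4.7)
The natural approach is to realise the uniformly random preferences via auxiliary i.i.d.\ uniforms and then condition on the predecessor utilities.  For every ordered pair $(i,j)$ with $j\ne i$ let $X_{i,j}\sim\mathrm{Uniform}[0,1]$ independently, and take $R_i$ to be the rank function of $(X_{i,k})_k$ under the convention $X_{i,i}:=1$ (which forces $R_i(i)=n$, giving a uniformly random $\sigma_i$).  Writing $p_i=\bold\Pi^{-1}(i)$, $s_i=\bold\Pi(i)$, and $\xi_i=X_{i,p_i}$, each $\xi_i$ with $i\ne h^*$ is uniform on $[0,1]$, and the $\xi_i$'s are jointly independent because they involve pairwise-disjoint first indices.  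The plan is to compute the conditional probability that $\bold\Pi$ is stable given $(\xi_i)_{i\ne h^*}$, identify it as $F(\bold\xi)$, and then integrate.

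The substantive step is translating conditions~(1) and~(2) into events on the $X_{i,j}$ and pinning down their conditional probabilities.  Condition~(1) is automatic when $\bold\Pi^2(i)=i$ (transpositions and the fixed point), and for $i$ in an odd cycle of length $\ge 3$ it reads $X_{i,s_i}\le\xi_i$; since $s_i\ne p_i$, this event has conditional probability $\xi_i$, contributing the factor $\prod_{h\in\text{Odd}(\bold\Pi)}x_h$ to $F$.  Condition~(2)---the no-blocking-pair axiom---has non-trivial content only for mutually unaligned pairs $\{i,j\}\notin D(\bold\Pi)$, since for adjacent pairs the implication is either vacuously true or reduces to an inequality already forced by (1).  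For such an unaligned pair with $i,j\ne h^*$, the event $\neg\bigl(X_{i,j}<\xi_i\text{ and }X_{j,i}<\xi_j\bigr)$ has conditional probability $1-\xi_i\xi_j$, because $j\ne p_i$ and $i\ne p_j$ make $X_{i,j}$ and $X_{j,i}$ distinct from the conditioning variables and hence still independent uniforms.  Finally, for a pair $\{i,h^*\}$ the consequent $R_{h^*}(i)>R_{h^*}(h^*)=n$ is impossible, so condition~(2) collapses to $X_{i,h^*}\ge\xi_i$ with conditional probability $1-\xi_i$; this is the role of $\prod_{k\ne h^*}(1-x_k)$.

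To conclude, observe that the atomic events above involve pairwise-disjoint sets of $X_{i,j}$-variables: the condition~(1) event at $i$ uses $X_{i,s_i}$; the condition~(2) event at $\{i,j\}\notin D(\bold\Pi)$ with $i,j\ne h^*$ uses $X_{i,j}$ and $X_{j,i}$; and the fixed-point event at $i$ uses $X_{i,h^*}$.  No collision occurs because each ordered pair $(i,k)$ appears as the second index in at most one of these events, thanks to the cycle structure of $\bold\Pi$ and the isolation of $h^*$.  Conditionally on $(\xi_i)_{i\ne h^*}$ the events are therefore mutually independent; their probabilities multiply to exactly $F(\bold\xi)$, and integrating against Lebesgue measure on $[0,1]^{n-1}$ (respectively $[0,1]^n$ if $h^*$ is absent) gives \eqref{p(Pstable)=}.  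The one delicate step is the case analysis around $D(\bold\Pi)$, together with the observation that the fixed point $h^*$ behaves as if its predecessor utility were frozen at $\xi_{h^*}=1$ and hence withheld from the integration.
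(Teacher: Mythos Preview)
Your proposal is correct and follows essentially the same approach as the paper: realise the preferences via i.i.d.\ uniforms $X_{i,j}$, condition on the predecessor values $X_{i,\bold\Pi^{-1}(i)}=x_i$, translate conditions (1)--(2) into events on the remaining $X_{i,j}$'s, observe conditional independence, and integrate.  You are in fact more explicit than the paper in verifying that the relevant events use pairwise-disjoint $X_{i,j}$-variables, and in isolating the role of the fixed point $h^*$ (frozen at $\xi_{h^*}=1$); the paper simply asserts conditional independence and applies Fubini.
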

If  $\bold\Pi$ is  a matching, we get (\cite {Pit2})
\begin{equation}\label{P(Pstable)=}
\pr\bigl(\bold\Pi\text{ is stable}\bigr)=\idotsint\limits_{\bold x\in [0,1]^{n}}\prod_{(i,j)\notin D(\bold\Pi)}\!\!(1-x_ix_j)\,d\bold x.
\end{equation}
\begin{proof} To generate the random instance $I_n$, introduce an array of the independent random variables 
$X_{i,j}$ ($1\le i\neq j\le n$), each distributed uniformly on $[0,1]$. Assume that each member $i\in [n]$
ranks the members $j\neq i$ in increasing order of the variables $X_{i,j}$. Such an ordering is uniform
for every $i$, and the orderings by different members are independent. 
Then
\begin{multline}\label{P(P)|x)}
\pr\bigl(\bold\Pi\text{ is stable}\,\bold |\,X_{i,\bold\Pi^{-1}(i)}=x_i,\,i\in [n]\bigr)\\
=
\!\!\!\prod_{h\in \text{Odd}(\bold\Pi)}\!\!\!\!\!x_h\,\cdot\prod_{(i,j)\notin D(\bold\Pi)}\!\!(1-x_ix_j)\prod_{k\neq h^*}(1-x_k)
\end{multline}
 Indeed,  by \eqref{Tan1,2}, $\bold\Pi$ is stable iff 
\begin{align*}
&(1) \text{ for every }h\in \text{Odd}(\bold\Pi):\,\,X_{h, \bold\Pi(h)}<X_{h,\bold\Pi^{-1}(h)},\\
&(2)\text{ for every }(i,j)\notin D(\bold\Pi),\, i,j\neq h^*:\,\,X_{i,j}<X_{i,\bold\Pi^{-1}(i)}\Rightarrow X_{j,i}>X_{j,\bold\Pi^{-1}(j)},\\
&(3)\text{ for every }i\neq h: X_{i,\bold\Pi^{-1}(i)}<X_{i,h^*}.
\end{align*}
And, conditioned on the event $\bigl\{X_{i,\bold\Pi^{-1}(i)}=x_i,\,i\in [n]\}$, the events above are independent, with (conditional)  probabilities $x_h$,  $1-x_ix_j$ and $1-x_k$ respectively. Using Fubini's theorem, we have
\eqref{p(Pstable)=}.
\end{proof}
Like analogous formulas in \cite{Pit1}, \cite{Pit2} and \cite{IrvPit}, this is a non-bipartite counterpart  of Knuth's formula for stable bipartite matchings, \cite{Knu}. His derivation was based on the inclusion-exclusion
method, coupled with ingenious observation that the resulting sum equals the multidimensional integral of a product-type integrand resembling our $F(\bold x)$. Of course, we could get a sum-type formula 
for $\pr\bigl(\bold\Pi\text{ is stable}\bigr)$ by expanding the product in \eqref{P(P)|x)} and integrating the
resulting sum term-wise. Moving in the opposite direction, i.e. starting with an inclusion-exclusion formula for $\pr\bigl(\bold\Pi\text{ is stable}\bigr)$, finding an integral-type representation
of the generic summand, and discerning that the sum of the attendant integrands happens to be
an expansion of the ``out-of-the blue'' product in \eqref{P(P)|x)}, would have been very problematic.  
The identity  \eqref{p(Pstable)=} is indispensable for asymptotic estimates,  thanks to a simple, but
powerful, bound
\begin{equation}\label{simple1}
\prod_{\{i,j\}\notin D(\bold\Pi)}\!\!(1-x_ix_j) \le \exp\Bigl(-\frac{s^2}{2}+4.5\Bigr),\quad s:=\sum_{i\in [n]}x_i.
\end{equation}
For instance, this bound and $\prod_k (1-x_k)\le e^{-s}$ will almost immediately yield that  
the stable partitions have no fixed point with  probability $\ge 1- e^{-\Theta(n^{1/2})}$. 
We will prove a surprisingly simple, yet qualitatively sharp estimate: uniformly for a fixed-point free partitions $\bold\Pi$,
\begin{equation}\label{P(Pstab)<simple}
\pr(\bold\Pi\text{ is stable})=O\left(\frac{1}{(n+m-1)!!}\right),\quad m:=|\text{Odd}(\bold\Pi)|.
\end{equation}

We note that Alcalde \cite{Alc} defined an {\it exchange stable\/} matching as a matching $M$ that, to quote from \cite{Man}, ``admits no {\it exchange-blocking pair\/}, which is a pair of members each of whom prefers the other's partner in $M$ to their own''. Cechl\'arov\'a and Manlove \cite{CecMan}
proved that, in stark contrast with the classic stable roommates model, the problem of determining whether a given instance admits an exchange-stable matching is NP-complete. The interested reader
may wish to check that the formula \eqref{P(Pstable)=} continues to hold for $\pr(\bold\Pi\text{ is exchange-stable})$. Consequently the expected number of exchange-stable matchings and the
expected number of the classic stable matchings are exactly the same, implying that the former is also
asymptotic to $e^{1/2}$. Let us call a (fixed-point free) partition $\bold\Pi$ exchange stable if no two members
prefer each other predecessors to their own predecessors under $\bold\Pi$. What about the partitions that are ``doubly-stable'', i.e. stable {\it and\/} exchange stable? It turns out that 
\begin{align*}
&\pr(\bold\Pi):=\pr\bigl(\bold\Pi\text{ is doubly stable}\bigr)=\idotsint\limits_{\bold x\in [0,1]^{n}}F_2(\bold x)\,d\bold x,\\
&\qquad F_2(\bold x):=\prod_{h\in \text{Odd}\,(\bold\Pi)}\!\!\!\!\!\!x_h\,\,\cdot\prod_{(i,j)\notin D(\bold\Pi)}\!\!(1-x_ix_j)^2.
\end{align*}
The counterpart of \eqref{P(Pstab)<simple} is $\pr(\bold\Pi)=O\bigl(2^{-\frac{n+m}{2}}/(n+m-1)!!\bigr)$, implying that the expected number of the doubly stable partitions is of order $2^{-n/2}$, way down from $n^{1/4}$ for the stable partitions.

Continuing, introduce $\mathcal R(\bold\Pi)$, the sum of the ranks of all predecessors in the preference
lists of their successors in a partition $\bold\Pi$. Let $\pr_k(\bold\Pi):=\pr(\bold\Pi\text{ is stable and }\mathcal R(\bold\Pi)=k)$.
\begin{Lemma}\label{P_k(P)=} Suppose $\bold\Pi$ is fixed-point free. Then, letting 
$m:=|\text{Odd}\,(\bold\Pi)|$, and $\bar x=1-x$,
\begin{equation}\label{P_k(P)=}
\begin{aligned}
&\pr_k(\bold\Pi)=\idotsint\limits_{\bold x\in [0,1]^{n}}[z^{k-n-m}] F(\bold x,z)\,d\bold x,\\
F(\bold x,z)&:=\prod_{h\in \text{Odd}\,(\bold\Pi)}\!\!\!\!\!\!x_h\,\,\cdot\prod_{(i,j)\notin D(\bold\Pi)}
\!\!\bigl(\bar x_i\bar x_j+zx_i\bar x_j+z\bar x_ix_j\bigr).
\end{aligned}
\end{equation}
\end{Lemma}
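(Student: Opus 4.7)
The plan is to re-use the coupling from Lemma \ref{P(partstab)=}: introduce the i.i.d.\ uniform array $\{X_{i,j}\}_{i\ne j}$, condition on $X_{i,\bold\Pi^{-1}(i)}=x_i$ for every $i\in[n]$, and observe that under fixed-point freeness this conditioning is consistent and the remaining $X_{i,k}$'s stay i.i.d.\ uniform on $[0,1]$. I would compute the conditional joint generating/indicator function
\[
G(\bold x,z):=\E\bigl[z^{\mathcal R(\bold\Pi)}\,\mathbb{1}\{\bold\Pi\text{ stable}\}\bigm|\bold x\bigr],
\]
integrate it over $\bold x\in[0,1]^n$, and read off $\pr_k(\bold\Pi)$ as the coefficient of $z^k$. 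Specialising $z=1$ must recover Lemma \ref{P(partstab)=}, which is a useful sanity check.

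The next step is to rewrite $\mathcal R(\bold\Pi)$ as a pair-indexed sum. Writing $R_i(\bold\Pi^{-1}(i))=1+|\{k\ne i,\bold\Pi^{-1}(i):X_{i,k}<x_i\}|$ and summing over $i\in[n]$ produces a deterministic $+n$ plus an ordered double sum that I regroup by unordered pairs $\{i,j\}$. The pairs $\{i,j\}\in D(\bold\Pi)$ with $j=\bold\Pi(i)\ne\bold\Pi^{-1}(i)$ occur precisely when $i$ lies in a cycle of length $\ge 3$; under the reduced form this is exactly $i\in\text{Odd}(\bold\Pi)$, and for each such $i$ stability forces $X_{i,\bold\Pi(i)}<x_i$, contributing a further deterministic $+m$. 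The remaining pairs $\{i,j\}\notin D(\bold\Pi)$ contribute $T_{ij}:=\mathbb{1}\{X_{i,j}<x_i\}+\mathbb{1}\{X_{j,i}<x_j\}\in\{0,1\}$ once the stability of that pair is imposed. Hence, on the stability event,
\[
\mathcal R(\bold\Pi)=n+m+\sum_{\{i,j\}\notin D(\bold\Pi)}T_{ij}.
\]

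For each pair $\{i,j\}\notin D(\bold\Pi)$, the variables $X_{i,j},X_{j,i}$ are independent uniform on $[0,1]$ given $\bold x$, so $(X_{i,j},X_{j,i})$ lands in one of four quadrants of $[0,1]^2$: stability excludes the quadrant of weight $x_ix_j$; the quadrant of weight $\bar x_i\bar x_j$ gives $T_{ij}=0$; and the two remaining quadrants give $T_{ij}=1$ with weights $x_i\bar x_j$ and $\bar x_ix_j$. Therefore
\[
\E\bigl[z^{T_{ij}}\mathbb{1}\{\text{pair not blocking}\}\bigm|x_i,x_j\bigr]=\bar x_i\bar x_j+zx_i\bar x_j+z\bar x_ix_j,
\]
which is exactly the pair factor in $F(\bold x,z)$. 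Multiplying across all non-$D$ pairs, multiplying by $\prod_{h\in\text{Odd}(\bold\Pi)}x_h$ for the odd-cycle stability events (whose forced rank contribution is already absorbed into the $+m$), pulling out the overall $z^{n+m}$ factor, and integrating over $\bold x$ yields $\sum_k z^{k-n-m}\pr_k(\bold\Pi)$; coefficient extraction is \eqref{P_k(P)=}.

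The one step I would handle carefully is the identification in the middle paragraph, namely that the pairs $\{i,\bold\Pi(i)\}$ with $\bold\Pi(i)\ne\bold\Pi^{-1}(i)$ correspond bijectively to $i\in\text{Odd}(\bold\Pi)$, so that the deterministic ``$+m$'' under stability and the factor $\prod_{h\in\text{Odd}(\bold\Pi)}x_h$ line up with the same index set. Once that accounting is in place, the rest is the product/independence bookkeeping from Lemma \ref{P(partstab)=}, with each non-$D$ pair's stability factor $1-x_ix_j$ refined into the three $T_{ij}$-classes weighted by $z^{T_{ij}}$.
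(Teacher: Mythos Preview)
Your proposal is correct and follows essentially the same route as the paper: condition on $X_{i,\bold\Pi^{-1}(i)}=x_i$, decompose $\mathcal R(\bold\Pi)$ as $n$ plus the odd-cycle indicators $\sum_{h\in\text{Odd}(\bold\Pi)}\chi(X_{h,\bold\Pi(h)}<x_h)$ plus the non-$D$ pair contributions, and compute the conditional expectation of $z^{\mathcal R(\bold\Pi)}\chi(\bold\Pi\text{ stable})$ as a product over pairs. The identification you flag as needing care---that for a reduced $\bold\Pi$ the pairs $\{i,\bold\Pi(i)\}\in D(\bold\Pi)$ with $\bold\Pi(i)\ne\bold\Pi^{-1}(i)$ are in bijection with $i\in\text{Odd}(\bold\Pi)$---is exactly the observation the paper uses (implicitly) to get the $z^{n+m}$ prefactor, and your quadrant calculation for each non-$D$ pair matches the paper's term-by-term.
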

\begin{proof} First of all, using $\chi(A)$ to denote the indicator of an even $A$, we have
\[
\pr_k(\bold\Pi)=[z^k]\ex\Bigl[z^{\mathcal R(\bold\Pi)}\chi(\bold\Pi\text{ is stable})\Bigr].
\]
Here
\begin{align*}
\chi(\bold\Pi\text{ is stable})&=\prod_{(i,j)\notin D(\bold\Pi)}\chi\bigl(X_{i,j}>X_{i,\Pi^{-1}(i)}\text{ or }
X_{j,i}>X_{j,\bold\Pi^{-1}(j)}\bigr)\\
&\times  \prod_{h\in \text{Odd}\,(\bold\Pi)}\chi\bigl(X_{h,\bold\Pi(h)}<
X_{h,\bold\Pi^{-1}(h)}\bigr).
\end{align*}
Furthermore
\begin{align*}
&\mathcal R(\bold\Pi)=\sum_{(i,j)\notin D(\bold\Pi)}\bigl[\chi(X_{i,j}< X_{i,\bold\Pi^{-1}(i)}) +\chi(X_{j,i}<X_{j,\bold\Pi^{-1}(j)})\bigr]\\
&\qquad+\sum_{i\in [n]} 1 +\sum_{h\in \text{Odd}\,(\bold\Pi)}\!\!\chi(X_{h,\bold\Pi(h)}<X_{h,\bold\Pi^{-1}(h)}),
\end{align*}
where the second sum accounts for the pairs $(i,\bold\Pi^{-1}(i))$, $i\in [n]$. So
\begin{align*}
&\ex\Bigl[\left.z^{\mathcal R(\bold\Pi)}\chi(\bold\Pi\text{ is stable})\,\right|\,X_{i,\bold\Pi^{-1}(i)}=x_i,\,i\in [n]
\Bigr]\\
&=z^{n+|\text{Odd}\,(\bold\Pi)|}\!\!\prod_{h\in \text{Odd}\,(\bold\Pi)}\!\!\!\!x_h\\
&\times\!\!\!\prod_{(i,j)\notin D(\bold\Pi)}\!\!
\ex\Bigl[z^{\chi(X_{i,j}< x_i) +\chi(X_{j,i}<x_j)}
\chi(X_{i,j}>x_i\text{ or }X_{j,i}>x_j)\Bigr]\\
&=z^{n+m}\!\!\prod_{h\in \text{Odd}\,(\bold\Pi)}\!\!\!\!x_h\cdot \!\!\!\prod_{(i,j)\notin D(\bold\Pi)}\!\!
\bigl(\bar x_i\bar x_j +z x_i\bar x_j+z\bar x_ix_j\bigr).
\end{align*}
So
\[
\ex\Bigl[z^{\mathcal R(\bold\Pi)}\chi(\bold\Pi\text{ is stable})\Bigr]=z^{n+m}
\idotsint\limits_{\bold x\in [0,1]^{n}} F(\bold x,z)\,d\bold x,
\]
which proves \eqref{P_k(P)=}.
\end{proof}

Finally, suppose we have a pair of distinct cyclic partitions, $\bold\Pi_1$ and $\bold\Pi_2$. Let 
$\pr(\bold\Pi_1,\bold\Pi_2)$ denote the probability that both
$\bold\Pi_1$ and $\bold\Pi_2$ are stable. We assume the two partitions have
the same odd cycles, since otherwise the probability is zero. Suppose also there is no fixed point. Let $\text{Odd}_{1,2}$ stand
for the vertex set of the family of odd cycles, common to both partitions; so $\bold\Pi_1(h)=\bold\Pi_2(h)$ for all
$h\in \text{Odd}_{1,2}$. The cardinality $|\text{Odd}_{1,2}|$ is even, and $\bold\Pi_1$
and $\bold\Pi_2$ induce a pair of perfect matchings $(M_1,M_2)$ on $\text{Even}_{1,2}:=[n]\setminus\text{Odd}_{1,2}$. 
Together, $M_1$ and $M_2$ determine a graph $G(M_1,M_2)
=\Bigl(\text{Even}_{1,2}, E\Bigr)$, with the edge set $E$ formed by the pairs $(i,j)\in M_1\cup M_2$.
Each component of $G(M_1,M_2)$ is either an edge $e\in M_1\cap M_2$,  or a circuit of even length 
at least $4$, in which the edges from $M_1$ and $M_2$ alternate. The edge set for all these 
(alternating) circuits is the symmetric difference $M_1\Delta M_2$.
\begin{Lemma}\label{p(P1,P2stab)=} Let $\pr(\bold\Pi_1,\bold\Pi_2)$ denote the probability that both
$\bold\Pi_1$ and $\bold\Pi_2$ are stable. For $r=1,2$, let $D_r$ be the set of unordered pairs
$(i\neq j)$ such $i=\bold\Pi_r(j)$ or, not exclusively, $j= \bold\Pi_r(i)$. Then
\begin{align*}
&\pr(\bold\Pi_1,\bold\Pi_2)=\idotsint\limits_{\bold x,\bold y\in [0,1]^n} F(\bold x,\bold y)\,d\bold xd\bold y,\\
&F(\bold x,\bold y)=\prod_{h\in \text{Odd}_{1,2}}\!\!\!\!\!x_h\,\,\cdot
\prod_{(i,j)\in D_1^c\cup D_2^c}\!\!\!
[1-x_ix_j-y_iy_j+(x_i\wedge y_i)(x_j\wedge y_j)];
\end{align*}
here 
\[
d\bold x=\prod_{i\in [n]} dx_i,\quad  d\bold y=\prod_{i: \bold\Pi_1(i)\neq \bold\Pi_2(i)} dy_i,
\]
and for every
circuit $\{i_1,\dots,i_{\ell}\}$ of $G(M_1,M_2)$: 
\begin{align*}
&\text{either } \,\,x_{i_1}>y_{i_1},\,x_{i_2}<y_{i_2},\dots, x_{i_{\ell}}<y_{i_{\ell}},\\
&\text{or}\qquad\,\, x_{i_1}<y_{i_1},\,x_{i_2}>y_{i_2},\dots, x_{i_{\ell}}>y_{i_{\ell}}.
\end{align*}
\end{Lemma}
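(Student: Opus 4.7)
The plan is to adapt the proof of Lemma 2.1 to track two partitions simultaneously. As before, represent the random instance via independent $U[0,1]$ variables $X_{i,j}$ ($i\neq j$), with member $i$ ranking the others in increasing order of $X_{i,\cdot}$. The key move is to condition on the predecessor entries for both partitions: for every $i\in[n]$ set $x_i:=X_{i,\bold\Pi_1^{-1}(i)}$, and for each $i$ with $\bold\Pi_1(i)\neq\bold\Pi_2(i)$ also set $y_i:=X_{i,\bold\Pi_2^{-1}(i)}$. For $i\in\text{Odd}_{1,2}$ or $i$ in a common matched pair of $M_1\cap M_2$, the two predecessors coincide, so only $x_i$ is conditioned on and effectively $y_i=x_i$.

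Conditioned on $(\bold x,\bold y)$, the non-blocking events at distinct unordered pairs and at distinct odd cycles are independent, and the contributions decouple as follows. For each $h\in\text{Odd}_{1,2}$, the common requirement $X_{h,\bold\Pi_1(h)}<x_h$ contributes the factor $x_h$, exactly as in Lemma 2.1. For $(i,j)\in M_1\cap M_2$, no constraint arises. For $(i,j)\in M_1\triangle M_2$ the variables $X_{i,j},X_{j,i}$ are themselves among the conditioned values -- they equal $x_i,x_j$ when $(i,j)\in M_1$ and $y_i,y_j$ when $(i,j)\in M_2$ -- so non-blocking of the ``other'' partition becomes a deterministic inequality between the $x$'s and $y$'s. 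Finally, for every remaining unmatched pair, $X_{i,j},X_{j,i}$ are free independent $U[0,1]$'s, and inclusion--exclusion over the two blocking events (one per partition) gives the conditional non-blocking probability $1-x_ix_j-y_iy_j+(x_i\wedge y_i)(x_j\wedge y_j)$, which is exactly the factor in the integrand.

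To pin down the admissible region, fix an alternating circuit $(i_1,\dots,i_\ell)$ of $G(M_1,M_2)$ and set $s_k:=\mathrm{sign}(x_{i_k}-y_{i_k})\in\{+,-\}$. An $M_1$-edge $(i_k,i_{k+1})$ enforces the disjunction $s_k=+\ \vee\ s_{k+1}=+$ (non-blocking of $\bold\Pi_2$), while an $M_2$-edge enforces $s_k=-\ \vee\ s_{k+1}=-$ (non-blocking of $\bold\Pi_1$). A short propagation argument shows that if any two adjacent signs coincided, the alternating chain of forced signs traversed around the circuit would contradict the disjunction on the closing edge. Consequently only the two strictly alternating patterns survive, giving the either/or dichotomy in the statement. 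Multiplying the four contributions above and integrating against the uniform joint density of $(\bold x,\bold y)$ on this admissible region then yields the claimed formula.

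The main obstacle is the circuit sign-pattern analysis: one must rule out all ``mixed'' sign patterns by a careful walk around each even circuit, and then verify that on the two admissible patterns the minima $(x_{i_k}\wedge y_{i_k})$ appearing in the unmatched-pair factors take the values compatible with the circuit structure. Careful bookkeeping of which $X$-entries are conditioned versus free is also delicate, since the $\bold y$-coordinates are defined only at the sym-diff vertices and the identification $X_{i,j}=x_i$ or $X_{i,j}=y_i$ depends on which matching the edge belongs to.
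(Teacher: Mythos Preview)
Your proposal is correct and follows essentially the same route as the paper's own proof: condition on $x_i=X_{i,\bold\Pi_1^{-1}(i)}$ and $y_i=X_{i,\bold\Pi_2^{-1}(i)}$, split the pairs $(i,j)$ into the odd-cycle edges, the common matches, the $M_1\triangle M_2$ edges (which yield indicator constraints), and the genuinely free pairs (which give the inclusion--exclusion factor $1-x_ix_j-y_iy_j+(x_i\wedge y_i)(x_j\wedge y_j)$), then reduce the system of ``or'' constraints on each alternating circuit to the two alternating sign patterns. Your sign-propagation argument is exactly the combinatorial content behind the paper's terse ``there are only two options on the circuit''; if anything, your separation of the $M_1\triangle M_2$ edges from the $D_1^c\cap D_2^c$ pairs is cleaner than the paper's statement, which writes the product over $D_1^c\cup D_2^c$ but really means $D_1^c\cap D_2^c$ (as confirmed by the later use in Section~5).
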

\noindent We omit the proof since it combines the elements of the proof for $\pr(\bold\Pi\text{ is}$
$\text{stable})$ and
of the formula for $\pr(\bold\Pi_1,\bold\Pi_2)$ in the case when $\bold\Pi_1$ and $\bold\Pi_2$ are
matchings, given in \cite{Pit2}. A counterpart of the bound \eqref{simple1}  is 
\begin{equation}\label{simple2}
\begin{aligned}
\prod_{(i,j)\in D_1^c\cup D_2^c}\!\!\!
&[1-x_ix_j-y_iy_j+(x_i\wedge y_i)(x_j\wedge y_j)]\\
&\le e^{2^8}\exp\Bigl(-\frac{s_1^2}{2}-\frac{s_2^2}{2}+\frac{s_{1,2}^2}{2}\Bigr);
\end{aligned}
\end{equation}
here $s_1=\sum_i x_i$, $s_2=\sum_iy_i$, $s_{1,2}=\sum_i (x_i\wedge y_i)$ and $i$ runs over $[n]$.
Never mind enormity of $e^{2^8}$;  like \eqref{simple1}, the bound \eqref{simple2} is both simple and instrumental in
identifying a relatively small, eminently tractable,  part of the integration domain which is ``in charge''
of the asymptotic behavior of $\pr(\bold\Pi_1,\bold\Pi_2)$.

{\bf Note.\/} The reader interested in our prior work on stable roommates problem (\cite{Pit1}, \cite{Pit2}
and \cite{IrvPit}) will not find the inequalities \eqref{simple1} and \eqref{simple2} there. Working on this
project, we detected a technical, estimational, glitch (see the next Section for details) in \cite{Pit1}, equally consequential for analysis in \cite{Pit2} and \cite{IrvPit}. Luckily the new bounds  \eqref{simple1}-\eqref{simple2} allow
to repair this error and, as an unexpected bonus,  to simplify the arguments as well. The analysis in
this paper can be viewed, in part, as a close template for the correction of that embarrassing oversight.
We emphasize that, fortunately, this correction leaves the ultimate asymptotic results in those references
intact.

\begin{proof} As in the proof of Lemma \ref{P(partstab)=}, we use the array $\{X_{i,j}:1\le i\neq j\le n\}$.
By the definition of stability, we have
\[
\{\bold\Pi_1,\,\bold \Pi_2\text{ are both stable}\}=\bigcap_{h\in \text{Odd}(\bold\Pi_{1,2})}\!\!\!\!\!A_h\bigcap\limits_{(i,j)\in D^c_1\cup D^c_2} \bigl(B_{(i,j)}\bigr)^c.
\]
Here $A_h=\bigl\{X_{h,\bold\Pi_{1,2}(h)}<X_{h,\bold\Pi_{1,2}^{-1}(h)}\bigr\}$.
Furthermore:  {\bf (1)\/} if $(i,j)\in D^c_1\cap D^c_2$, then
\begin{align*}
B_{(i,j)}&=\Bigl\{X_{i,j}<X_{i,\bold\Pi_1^{-1}(i)}; \,X_{j,i}<X_{j,\bold\Pi_1^{-1}(j)}\}\\
&\qquad\cup\{X_{i,j}<X_{i,\bold\Pi_2^{-1}(i)}; \,X_{j,i}<X_{j,\bold\Pi_2^{-1}(j)}\Bigr\};
\end{align*}
{\bf (2)\/} if $(i,j)\in  D^c_1\cap D_2$, then necessarily $(i,j)\in M_1^c\cap M_2$, 
and, by stability of $\bold\Pi_1$,
\[
B_{(i,j)}=\Bigl\{X_{i,\bold\Pi_2^{-1}(i)}<X_{i,\bold\Pi_1^{-1}(i)}; \,X_{j,\bold\Pi_2^{-1}(j)}<X_{j,\bold\Pi_1^{-1}(j)}\Bigr\};
\]
{\bf (3)\/} if $(i,j)\in D_1\cap D^c_2$, then necessarily $(i,j)\in M_1\cap M_2^c$ and, by
stability of $\bold\Pi_2$,
\[
B_{(i,j)}=\Bigl\{X_{i,\bold\Pi_1^{-1}(i)}<X_{i,\bold\Pi_2^{-1}(i)}; \,X_{j,\bold\Pi_1^{-1}(j)}<X_{j,\bold\Pi_2^{-1}(j)}\Bigr\}.
\]
Conditioned on the values 
\[
X_{i,\bold\Pi_1^{-1}(i)}=x_i,\,\,(i\in [n]),\quad  X_{i,\bold\Pi_2^{-1}(i)}=y_i, \,\,(i\in [n]:\bold\Pi_1(i)\neq \bold\Pi_2(i)),
\]
the events $A_h$, $B_{(i,j)}$ are all independent. And, denoting the characteristic function of
a set $U\subset [0,1]^{2n}$ by $\chi(U)$, we have $\pr(A_h \boldsymbol|\boldsymbol\cdot) = x_h$,
\[
\pr\bigl((B_{(i,j)})^c \boldsymbol|\boldsymbol\cdot\bigr)=\left\{\begin{aligned}
&1-x_ix_j-y_iy_j+(x_i\wedge y_i)(x_j\wedge y_j),&&\text{ Case }{\bf (1)\/},\\
&\chi(y_i\ge x_i\text{ or } y_j\ge x_j),&&\text{ Case }{\bf (2\/}),\\
&\chi(x_i\ge y_i\text{ or }x_j\ge y_j),&&\text{ Case }{\bf (3)\/}.\end{aligned}\right.
\]
Therefore
\begin{align*}
&\pr\bigl(\bold\Pi_1,\,\bold \Pi_2\text{ are both stable}\boldsymbol|\boldsymbol\cdot\bigr)\\
&\prod_{h\in\text{Odd}_{1,2}}\!\!\!\!\!\!x_h\,\,\,\cdot\prod_{(i,j)\in  D^c_1\cup D^c_2}
\bigl[1-x_ix_j-y_iy_j+(x_i\wedge y_i)(x_j\wedge y_j)\bigr],
\end{align*}
provided that  $\forall\,(i,j)\in M_1^c\cap M_2$, we have $y_i\ge x_i$ or $y_j\ge x_j$ and
$\forall\,(i,j)\in M_1\cap M_2^c$, we have $x_i\ge y_i$ or $x_j\ge y_j$. (The conditional probability is
zero otherwise.) Since the edges from $M_1\Delta M_2$ form the disjoint alternating circuits of length 
$\ge 4$, the condition means that for every such circuit $\{i_1, i_2,\dots, i_{\ell},\}$ [with $(i_1,i_2)\in M_1, (i_2,i_3)\in M_2,\dots, (i_{\ell},i_1)\in M_2$, say], we
have
\begin{align*}
&y_{i_1}\le x_{i_1}\qquad\text{or}\qquad y_{i_2}\le x_{i_2},\\
&y_{i_2}\ge x_{i_2}\qquad\text{or}\qquad y_{i_3}\ge x_{i_3},\\
&\qquad\qquad\qquad\boldsymbol:\\
&y_{i_{\ell-1}}\le x_{i_{\ell-1}}\,\text{ or}\qquad y_{i_\ell}\le x_{i_\ell},\\
&y_{i_{\ell}}\ge x_{i_{\ell}}\qquad\!\!\!\,\,\,\text{ or}\qquad y_{i_1}\ge x_{i_1}.
\end{align*}
We may, of course, assume that all these inequalities are strict. Thus there are only two options on the
circuit: either $x_{i_1}>y_{i_1},\,x_{i_2}<y_{i_2},\,\dots, x_{i_{\ell}}<y_{i_{\ell}},$ or
$x_{i_1}<y_{i_1},\,x_{i_2}>y_{i_2},\,\dots, x_{i_{\ell}}>y_{i_{\ell}}$. (In both options, the inequalities
alternate.)
Application of Fubini's theorem completes the proof.
\end{proof}
\section{Estimation tools}\label{Esttools} To estimate the integrals in Lemma \ref{p(Pstable)=} 
and Lemma \ref{p(P1,P2stab)=} for $n\to\infty$, we will need the following claim, see \cite{Pit0}, \cite{Pit2}:
\begin{Lemma}\label{intervals1} Let $X_1,\dots, X_{\nu}$ be independent $[0,1]$-Uniforms. Let 
$S=\sum_{i\in [\nu]}X_i$ and $\bold V=\{V_i=X_i/S; i\in [\nu]\}$, so that $\sum_{i\in [\nu]}V_i=1$.
Let $\bold L=\{L_i ;\, i\in [\nu]\}$ be the set of lengths of the $\nu$ consecutive subintervals of $[0,1]$ obtained by selecting, independently and uniformly at random, $\nu-1$ points in $[0,1]$. 
Then (with  $\chi(A)$ standing for the indicator of an event $A$) the joint density $f_{S,\bold V}(s,\bold v)$, ($\bold v=(v_1,\dots,v_{\nu-1})$), of $(S,V)$ is given by
\begin{equation}\label{joint<}
\begin{aligned}
f_{S,\bold V}(s,\bold v)&=s^{\nu-1}\chi\bigl(\max_{i\in [\nu]} v_i\le s^{-1}\bigr) \chi(v_1+\cdots+v_{\nu-1}\le 1)\\
&\le \frac{s^{\nu-1}}{(\nu-1)!} f_{\bold L}(\bold v),\quad v_{\nu}:=1-\sum_{i=1}^{\nu-1}v_i;
\end{aligned}
\end{equation}
here $f_{\bold L}(\bold v)=(\nu-1)!\,\chi(v_1+\cdots+v_{\nu-1}\le 1)$ is the density of $(L_1,\dots,L_{\nu-1})$.
\end{Lemma}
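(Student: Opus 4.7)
The plan is to obtain the joint density of $(S,\bold V)$ by a direct change of variables from $(X_1,\dots,X_\nu)$, after which the stated bound reduces to dropping a single indicator. Consider the inverse map $X_i = SV_i$ for $i\le \nu-1$ and $X_\nu = S(1-V_1-\cdots-V_{\nu-1})$. Its Jacobian matrix has first column $(V_1,\dots,V_{\nu-1},V_\nu)^T$ whose entries sum to $1$, while each of the remaining $\nu-1$ columns contains a single $+S$ in some row $j<\nu$, a $-S$ in row $\nu$, and zeros elsewhere, so those columns sum to $0$. Adding rows $1,\dots,\nu-1$ to row $\nu$ converts the last row to $(1,0,\dots,0)$, and cofactor expansion along that row gives $|\det J| = S^{\nu-1}$.

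Since $(X_1,\dots,X_\nu)$ has joint density $\prod_i \chi(x_i\in[0,1])$, the change-of-variables formula then yields
\[
f_{S,\bold V}(s,\bold v) \;=\; s^{\nu-1}\,\chi\!\bigl(\max_{i\in[\nu]} v_i\le s^{-1}\bigr)\,\chi(v_1+\cdots+v_{\nu-1}\le 1),
\]
where the second indicator enforces $v_\nu=1-\sum_{i<\nu}v_i\ge 0$ (the upper bound $sv_\nu\le 1$ is absorbed into the maximum once the index $i=\nu$ is included). This is precisely the identity in \eqref{joint<}.

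For the inequality I would simply drop the factor $\chi(\max_i v_i\le s^{-1})\le 1$ and invoke the classical fact that the first $\nu-1$ of the $\nu$ spacings cut out of $[0,1]$ by $\nu-1$ i.i.d.\ Uniforms have joint density $f_{\bold L}(\bold v)=(\nu-1)!\,\chi(v_1+\cdots+v_{\nu-1}\le 1)$ (a Dirichlet$(1,\dots,1)$ density, easily derived by an analogous change of variables). Combining,
\[
f_{S,\bold V}(s,\bold v)\;\le\; s^{\nu-1}\,\chi(v_1+\cdots+v_{\nu-1}\le 1)\;=\;\tfrac{s^{\nu-1}}{(\nu-1)!}\,f_{\bold L}(\bold v).
\]
No step is technically hard; the only place to be careful is the Jacobian, where the row-reduction trick avoids sign juggling. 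The pay-off of the bound is conceptual rather than technical: it trades the $s$\textendash $\bold v$ coupling contained in $\chi(\max_i v_i\le s^{-1})$ for a clean product $s^{\nu-1}\cdot f_{\bold L}(\bold v)$, which factorizes downstream integrals into a one-dimensional $s$-integral times a tractable Dirichlet integral, exactly the decoupling needed when the identity \eqref{joint<} is used together with the bounds \eqref{simple1} and \eqref{simple2}.
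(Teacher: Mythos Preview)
Your proof is correct and is the standard change-of-variables argument; the paper does not actually supply a proof of this lemma but simply cites \cite{Pit0}, \cite{Pit2}, so there is nothing to compare against here beyond noting that your derivation is exactly the kind of computation those references contain.
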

We will also use the classic identities,  Andrews, Askey and Roy \cite{AndAskRoy}, Section 1.8:
\begin{equation}\label{int,prod}
\begin{aligned}
&\overbrace {\idotsint}^{\nu}_{\bold x\ge \bold 0 \atop x_1+\cdots+x_{\nu}\le 1}\prod_{i\in [\nu]} x_i^{\alpha_i}\,\,d\bold x=\frac{\prod_{i\in [\nu]}\alpha_i!}{(\nu+\alpha)!},\quad \alpha:=\sum_{i\in [\nu]}\alpha_i,\\
&\overbrace {\idotsint}^{\nu-1}_{\bold x\ge\bold 0\atop x_1+\cdots+x_{\nu}=1}\prod_{i\in [\nu]} x_i^{\alpha_i}\,\,dx_1\cdots dx_{\nu-1} =\frac{\prod_{i\in [\nu]}\alpha_i!}{(\nu-1+\alpha)!}.
\end{aligned}
\end{equation}
The identity/bound \eqref{joint<} is useful since the random vector $\bold L$ had been well studied. It is known,
for instance, that 
\begin{equation}\label{L^{(nu)}=frac}
\bold L\overset{\mathcal D}\equiv\left\{\frac{w_i}{\sum_{j\in [\nu]}w_j}\right\}_{i\in [\nu]},
\end{equation}
where $w_j$ are independent, exponentially distributed, with the same parameter, $1$ say. Here is this property at work.
\begin{Lemma}\label{sumsofLs} (1) Let $s\ge 2$. If $\eps_{\nu}\to 0$, $\eps_{\nu}\gg
\nu^{-\tfrac{1}{s+1}}$. Then
\begin{equation}\label{sumLj^s}
\pr\Biggl(\Bigl|\frac{\nu^{s-1}}{s!}\sum_{j\in [\nu]}L_j^s-1\Bigr|\ge\eps_{\nu}\Biggr)=
O\Bigl(\exp\bigl(-c\,\eps_{\nu}\nu^{\tfrac{1}{s+1}}\bigr)\Bigl).
\end{equation}
(2) For $\nu$ even,
\begin{equation}\label{sumsofprods}
\pr\Biggl(\Bigl|2\nu\sum_{j\in [\nu/2]}L_j L_{j+\nu/2}-1\Bigr|\ge\eps_2\Biggr)=
O\Bigl(\exp\bigl(-c\,\eps_{\nu}\nu^{\tfrac{1}{s+1}}\bigr)\Bigl).
\end{equation}
\end{Lemma}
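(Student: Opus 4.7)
The plan is to exploit the representation \eqref{L^{(nu)}=frac}, writing $L_j=w_j/W$ with $w_j$ i.i.d.\ Exp$(1)$ and $W=\sum_{j\in[\nu]} w_j$. Setting $T_s:=\sum_{j\in[\nu]} w_j^s$, we have
\begin{equation*}
\frac{\nu^{s-1}}{s!}\sum_{j\in[\nu]} L_j^s \;=\; \frac{T_s/(\nu\,s!)}{(W/\nu)^s},
\end{equation*}
and both the numerator and the denominator have mean $1$, since $\ex[w_j^s]=s!$. If each is within a multiplicative factor $1\pm c\eps_{\nu}$ of $1$, so is their ratio (for any fixed $s$ and $\eps_{\nu}\to 0$). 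Hence the task reduces to showing that the two events $|W-\nu|\ge c\nu\eps_{\nu}$ and $|T_s-\nu\cdot s!|\ge c\nu\eps_{\nu}$ each have probability $O\bigl(\exp(-c\eps_{\nu}\nu^{1/(s+1)})\bigr)$.

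For $W$ this is immediate from the classical Chernoff bound for sums of i.i.d.\ exponentials, which yields the much stronger sub-exponential rate $\exp(-c\nu\eps_{\nu}^2)$. The delicate piece is $T_s$, since for $s\ge 2$ the MGF of $w^s$ is infinite; I will handle it by truncation at level $T:=B\nu^{1/(s+1)}$ with $B$ a suitable constant. A union bound gives $\pr(\max_j w_j>T)\le\nu e^{-T}$, which is absorbed into the target since $T\gg\log\nu$. On the complementary event, the variables $Y_j:=w_j^s$ are bounded by $T^s=B^s\nu^{s/(s+1)}$ and have variance at most $\ex[w^{2s}]=(2s)!$; Bernstein's inequality then gives
\begin{equation*}
\pr\!\left(\Bigl|\sum\nolimits_j Y_j-\ex\sum\nolimits_j Y_j\Bigr|\ge\nu\eta\right)\le 2\exp\!\left(-\frac{c\,\nu\eta^2}{(2s)!+T^s\eta}\right),
\end{equation*}
and for $\eta=\Theta(\eps_{\nu})$ with $\eps_{\nu}\gg\nu^{-1/(s+1)}$ the second term in the denominator dominates, yielding exponent $\Theta(\nu\eta/T^s)=\Theta(\eps_{\nu}\nu^{1/(s+1)})$. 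The truncation-induced mean shift $\nu\,\ex[w^s\chi(w>T)]=O(\nu T^s e^{-T})$ is $o(\nu\eps_{\nu})$ for $B$ large, again using $T\gg\log(1/\eps_{\nu})$.

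Part (2) follows from the same recipe. Writing $L_jL_{j+\nu/2}=w_jw_{j+\nu/2}/W^2$, observe that the pairs $(j,j+\nu/2)$ for $j\in[\nu/2]$ partition $[\nu]$, so the products $U_j:=w_jw_{j+\nu/2}$ are independent with $\ex U_j=1$ and $\var(U_j)=3$. Their tail $\pr(U_j>t)$ decays like $e^{-2\sqrt{t}}$ (up to polynomial factors), mimicking the effective $s=2$ regime; truncating each $w_j$ at $T=B\nu^{1/3}$ yields $U_j\le T^2=B^2\nu^{2/3}$, and Bernstein delivers the exponent $\Theta(\eps_{\nu}\nu^{1/3})$, which is the intended content of the stated bound with $s=2$. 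Combining with the Chernoff bound for $W$ closes this case. The one delicate point throughout is calibrating the truncation level $T$ so as to simultaneously (i) absorb the union-bound tail $\nu e^{-T}$, (ii) produce the Bernstein exponent of order $\nu\eps_{\nu}/T^s$, and (iii) keep the truncation mean shift $\nu T^s e^{-T}$ below $\nu\eps_{\nu}$; the choice $T=B\nu^{1/(s+1)}$ meets all three, using $\eps_{\nu}\gg\nu^{-1/(s+1)}$ to force $T\gg\log(1/\eps_{\nu})$.
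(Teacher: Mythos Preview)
Your proposal is correct and follows essentially the same route as the paper: both reduce via the exponential representation $L_j=w_j/W$ to separate concentration estimates for $W$ and $T_s=\sum_j w_j^s$, handle $W$ by Chernoff, and attack $T_s$ (whose MGF is infinite for $s\ge 2$) by truncating each $w_j$ at level $\Theta(\nu^{1/(s+1)})$, this being the optimal scale. The only cosmetic difference is that after truncation the paper computes the MGF $\ex[e^{\nu^{-s\alpha}V^s}]$ directly and applies a Chernoff bound, whereas you invoke Bernstein's inequality; these are equivalent packagings of the same sub-exponential estimate, yielding the same exponent $\Theta(\eps_\nu\,\nu^{1/(s+1)})$. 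Your treatment of part~(2) via the independent products $U_j=w_jw_{j+\nu/2}$ is likewise in the same spirit (the paper simply says ``similar, and we omit it'').
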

\begin{proof} Observe that $\ex[W]=1$, $\ex[W^s]=s!$. Choose 
\[
a=\left(1+\frac{\eps_{\nu}}{3}\right)\!s!,\quad b=1-\frac{\eps_{\nu}}{3s},
\]
so that $a/b^s<(1+\eps)s!$, for $\nu$ sufficiently large. Then, denoting $\mathcal W^{(\ell)}=\sum_j W_j^{\ell}$,
\begin{align*}
&\pr\Bigl(\nu^{s-1}\sum_{j\in [\nu]}L_j^s\ge (1+\eps_{\nu})s!\Bigr)=
\! \pr\!\left(\frac{\mathcal W^{(s)}}{\bigl(W^{(1)}\bigr)^s}\ge \frac{(1+\eps_{\nu})s!}{\nu}\right)\\
&\le \pr\Bigl(W^{(s)}\ge a\nu\text{ or }\mathcal W^{(1)}<b\nu)\Bigr)
\le \!\pr\Bigl(\!\mathcal W^{(s)}\ge a\nu\Bigr) +\pr\Bigl(W^{(1)}<b\nu\Bigr).
\end{align*}
Since $\ex\bigl[e^{-zW}\bigr]<\infty$ for every $z\ge 0$, the standard application of Chernoff's method yields 
\begin{equation}\label{denom}
\pr\bigl(W^{(1)}<b\nu\bigr) \le \exp(-\nu c(b)),\quad c(b)=b-1-\log b=\Theta\bigl(\eps_{\nu}^2\bigr).
\end{equation}
Bounding  $\pr\Bigl(\!\mathcal W^{(s)}\ge a\nu\Bigr)$ is more problematic since $\ex\bigl[e^{zW^2}\bigr]=\infty$ for $z>0$. Truncation to the rescue! Introduce $V=\min\{W,n^{\alpha}\}$, $(\alpha<1)$; then
\begin{equation}\label{P(WneqV)<}
\pr(W_j\not\equiv V_j,\, j\in [\nu])\le \nu\pr(W\ge \nu^{\alpha})=\nu e^{-\nu^{\alpha}}= e^{-\Theta(\nu^{\alpha})}.
\end{equation}
Further
\begin{align*}
\ex\bigl[e^{n^{-s\alpha}V^s}\bigr]&=\int_0^{n^{\alpha}}e^{(n^{-\alpha} w)^s}\,e^{-w}\,dw +e^{1-\nu^{\alpha}}\\
&\le 1+ \nu^{-s\alpha}\int_0^{\infty}w^s e^{-w}\,dw+O\bigl(\nu^{-2s\alpha}\bigr)\\
&=1+\nu^{-s\alpha}s!+O\bigl(n^{-2s\alpha}\bigr).
\end{align*}
So
\begin{align*}
\pr\Biggl(\sum_{j\in [\nu]} V_j^s\ge a\nu\Biggr)&\le \frac{\left(1+\nu^{-s\alpha}s!+O\bigl(\nu^{-2s\alpha}\bigr)\right)^{\nu}}
{\exp\bigl(\nu(an^{-s\alpha})\bigr)}\\
&=\exp\Bigl(-\nu^{1-s\alpha}(a-s!) +O\bigl(\nu^{1-2s\alpha}\bigr)\Bigr).
\end{align*}
Combining this bound with \eqref{P(WneqV)<}, we select the best $\alpha=1/(s+1)$ and obtain
\begin{equation}\label{subexp}
\pr\bigl(W^{(s)}\ge a\nu\bigl)\le \exp\bigl(-\hat c\, \eps_{\nu}\nu^{\tfrac{1}{s+1}}\bigr),\quad (\hat c>0).
\end{equation}
This bound combined with \eqref{denom} prove that
\[
\pr\Bigl(\nu^{s-1}\sum_{j\in [\nu]} L_j^s\ge (1+\eps_{\nu})s!\Bigr)=O\Bigl(\exp\bigl(-c\,\eps_{\nu}\nu^{\tfrac{1}{s+1}}\bigr)\Bigl).
\]
Since $\ex\bigl[e^{-zW^s}\bigr]<\infty$ for all $z>0$,  there is no need for truncation, and we get
\[
\pr\Bigl(\nu^{s-1}\sum_{j\in [\nu]} L_j^s\le (1-\eps_{\nu})s!\Bigr) \le e^{-\Theta\bigl(\nu\eps_{\nu}^2\bigr)}, 
\]
So \eqref{sumLj^s} follows. The proof of \eqref{sumsofprods} is similar, and we omit it.
\end{proof}
 {\bf Note.\/} In \cite{Pit1} we claimed that the probabilities in Lemma \ref{sumsofLs}, for $\eps$ fixed, could be shown to be exponentially small, and used
this claim also in \cite{Pit2} and \cite{IrvPit}, hoping to apply it again in this study. We have realized though that for the right tail of the sums' distributions we could get only  a {\it sub\/}-exponential bound,
see \eqref{subexp}.
Fortunately, with the new inequalities \eqref{simple1}-\eqref{simple2} put to
use, the sub-exponential bounds  \eqref{sumLj^s} and \eqref{sumsofprods} are all we need. The
interested reader may see for themselves that the resulting proof provides a clear recipe for local changes  in \cite{Pit1}, \cite{Pit2} and \cite{IrvPit}, which make the thorny issue of exponential bounds go away completely.

In addition to the bounds \eqref{sumLj^s}, we will need
\begin{equation}\label{P(L^+>)<}
\pr\left(\max_{j\in [\nu]} L_j^{(\nu)}\ge \frac{1.01\log ^2\nu}{\nu}\right)\le e^{-\log^2\nu},
\end{equation}
which directly follows from 
\[
\pr\left(\max_{j\in [\nu]} L_j^{(\nu)}\ge x\right)\le \nu(1-x)^{\nu-1}. 
\]
\section{Estimates of $\ex[\mathcal S_n]$ and $\ex[\mathcal O_n]$, ramifications}
We need to identify a part of the cube $[0,1]^n$ that provides the dominant contribution to
the integral in \eqref{p(Pstable)=}. This will allow us to estimate, sharply,
the expected total length of the odd cycles in the random instance $I_n$. Many of the intermediate estimates can be traced back to \cite{Pit1}, \cite{Pit2} and \cite{IrvPit}. We begin with the pair
of two new, instrumental, bounds for the products in the integrands expressing $\pr(\bold\Pi):=\pr(\bold \Pi\text{ is stable})$ and
$\pr(\bold\Pi_1,\bold\Pi_2):=\pr(\bold\Pi_1\text{ and }\bold\Pi_2\text{ are both stable})$. 

In the statement below and elsewhere we will write $A_n\le_b B_n$ as a shorthand for ``$A_n=O(B_n)$, uniformly over parameters that determine $A_n$, $B_n$'', when the expression for $B_n$ is uncomfortably bulky for an argument of the big O notation. We will also write $A_n\lesssim B_n$ if $\limsup A_n/B_n\le 1$.

\begin{Lemma}\label{simple1,2} 
\begin{align*}
&\prod_{\{i,j\}\notin D(\bold\Pi)}\!\!(1-x_ix_j) \le_b \exp\Bigl(-\frac{s^2}{2}\Bigr),\quad s:=\sum_{i\in [n]}x_i,\\
&\prod_{(i,j)\in D_1^c\cup D_2^c}\!\!\!
[1-x_ix_j-y_iy_j+(x_i\wedge y_i)(x_j\wedge y_j)]\\
&\qquad\qquad\le_b\exp\Bigl(-\frac{s_1^2}{2}-\frac{s_2^2}{2}+\frac{s_{1,2}^2}{2}\Bigr);
\end{align*}
here $s_1=\sum_i x_i$, $s_2=\sum_iy_i$, $s_{1,2}=\sum_i (x_i\wedge y_i)$ and $i$ runs through $[n]$.
\end{Lemma}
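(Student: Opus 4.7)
My plan for Lemma~6.1 is as follows. Both inequalities come from the same strategy: apply the second-order Taylor bound $\log(1-t)\le -t-\tfrac12 t^2$ (valid for $t\in[0,1)$) to each factor of the product, and show that the negative quadratic correction absorbs the ``linear overshoot'' past the target Gaussian exponent, up to an additive $O(1)$.

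For the first bound, let $q:=\sum_i x_i^2$. Summing $\log(1-x_ix_j)\le -x_ix_j-\tfrac12(x_ix_j)^2$ over $\{i,j\}\notin D(\bold\Pi)$, the linear part rearranges as
\[
-\!\!\!\sum_{\{i,j\}\notin D}\!\!\!x_ix_j
=-\tfrac12 s^2+\tfrac12 q+\!\!\!\sum_{\{i,j\}\in D}\!\!\!x_ix_j.
\]
Every vertex has degree at most $2$ in $D(\bold\Pi)$ (one pair to its predecessor, one to its successor), so $x_ix_j\le\tfrac12(x_i^2+x_j^2)$ edgewise yields $\sum_D x_ix_j\le q$ and hence an overshoot at most $3q/2$. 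For the quadratic part, $\sum_{i<j}(x_ix_j)^2=\tfrac12(q^2-\sum x_i^4)\ge\tfrac12(q^2-q)$ using $x_i^4\le x_i^2$, and $\sum_D(x_ix_j)^2\le q$, so the quadratic correction is at most $-\tfrac14\max(0,q^2-3q)$. The one-variable residue $\tfrac{3q}{2}-\tfrac14\max(0,q^2-3q)$ is bounded above by $81/16$ on $q\ge 0$, which delivers the claim (consistent with the constant $4.5$ in \eqref{simple1}).

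For the second bound, set $w_i:=x_i\wedge y_i$, $M_i:=x_i\vee y_i$, and $t_{ij}:=x_ix_j+y_iy_j-w_iw_j$; since $w_iw_j\le x_ix_j$ and $w_iw_j\le y_iy_j$, we immediately get $t_{ij}\ge\max(x_ix_j,y_iy_j)\ge 0$, so each factor $1-t_{ij}\in[0,1]$. The crucial pointwise identity $x_i^2+y_i^2-w_i^2=M_i^2$ separates the linear contribution exactly as
\[
\sum_{i<j}t_{ij}=\frac{s_1^2+s_2^2-s_{1,2}^2}{2}-\frac{\sum_i M_i^2}{2}.
\]
After removing the $(i,j)\in D_1\cap D_2$ piece, the target exponent $-\tfrac12s_1^2-\tfrac12s_2^2+\tfrac12s_{1,2}^2$ emerges together with an overshoot $\tfrac12\sum_i M_i^2+\sum_{D_1\cap D_2}t_{ij}$. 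Bounding $M_i^2\le x_i^2+y_i^2$ and $t_{ij}\le M_iM_j\le\tfrac12(M_i^2+M_j^2)$, together with the degree-at-most-$2$ property of $D_1\cap D_2$, controls the overshoot by $\tfrac32(q_1+q_2)$, where $q_1:=\sum x_i^2$ and $q_2:=\sum y_i^2$. The inequality $t_{ij}^2\ge\tfrac12[(x_ix_j)^2+(y_iy_j)^2]$ (from $t_{ij}\ge\max(x_ix_j,y_iy_j)$), combined with the same arithmetic as in the first bound, produces for each $r=1,2$ a quadratic correction at most $-\tfrac18\max(0,q_r^2-3q_r)$. The residue is then a bounded function of $(q_1,q_2)\in[0,\infty)^2$.

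The main obstacle is the bookkeeping for the second bound: pinning down the target exponent $-\tfrac12s_1^2-\tfrac12s_2^2+\tfrac12s_{1,2}^2$ cleanly requires both the pointwise identity $x_i^2+y_i^2-w_i^2=M_i^2$ (to be checked by a three-line case split on the sign of $x_i-y_i$) and a careful accounting of which pairs are deleted when passing from $\binom{[n]}{2}$ to $D_1^c\cup D_2^c$, namely the common $2$-cycles and the common odd-cycle edges. Once this algebra is in place, the remaining work is the one-variable calculus of the residue plus the degree-$\le 2$ argument in $D_1\cap D_2$, which only affects the numerical value of the absorbed constant, consistent with the generous $e^{2^8}$ cited in \eqref{simple2}.
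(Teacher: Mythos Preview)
Your proposal is correct and follows essentially the same route as the paper's proof: both parts apply the second-order bound $\log(1-t)\le -t-\tfrac12 t^2$, isolate the target Gaussian exponent from the linear sum up to an overshoot of order $q=\sum x_i^2$ (respectively $q_1+q_2$), and then absorb that overshoot using the quadratic sum, which dominates $q^2$ up to $O(q)$; the residue is a bounded one-variable function. Your use of the pointwise identity $x_i^2+y_i^2-w_i^2=M_i^2$ and the bound $t_{ij}\le M_iM_j$ streamlines the bookkeeping in part~(2) slightly compared to the paper's separate upper/lower estimates of $\sum x_ix_j$, $\sum y_iy_j$, $\sum w_iw_j$, and yields noticeably smaller absolute constants, but the architecture is identical.
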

\begin{proof}
{\bf (1)\/}  Using $1-z\le e^{-z-z^2/2}$,  we have
\begin{equation}\label{start}
\prod_{(i,j)\notin D(\bold\Pi)}(1-x_ix_j)\le\exp\Biggl(-\sum_{(i,j)\notin D(\bold\Pi)}\Bigl(x_ix_j+\frac{x_i^2x_j^2}{2}\Bigr)\Biggr).
\end{equation}
Here, using $2ab\le a^2+b^2$,
\begin{align}
\sum_{(i,j)\notin D(\bold\Pi)}\!\!\!x_ix_j&=\frac{s^2}{2}-\frac{1}{2}\sum_{i\in [n]}x_i^2-\sum_{i\in [n_1/2]}\!\!x_i
x_{i+n_1/2}-\sum_{h\in \text{Odd}(\bold\Pi)}\!\!\!x_hx_{\bold\Pi(h)}\label{exactexp}\\
&\ge \frac{s^2}{2}-\frac{3}{2}\sum_{i\in [n]}x_i^2.\notag
\end{align}
Analogously, and using $\max_i x_i\le 1$,
\begin{align}
\sum_{(i,j)\notin D(\bold\Pi)}x_i^2x_j^2&\ge \frac{1}{2}\Biggl(\sum_{i\in [n]}x_i^2\Biggr)^2-\frac{3}{2}\sum_{i\in [n]}x_i^4\label{exactexp2}\\
&\ge \frac{1}{2}\Biggl(\sum_{i\in [n]}x_i^2\Biggr)^2-\frac{3}{2}\sum_{i\in [n]}x_i^2.
\end{align}
Therefore
\begin{align}
\sum_{(i,j)\notin D(\bold\Pi)}\left(x_ix_j+\frac{x_i^2x_j^2}{2}\right)&\ge \frac{s^2}{2}+ \frac{1}{2}\left(\sum_{i\in [n]}x_i^2\right)^2-3\sum_{i\in [n]}x_i^2\\
&\ge \frac{s^2}{2}-4.5,
\end{align}
so that 
\begin{equation*}
\prod_{(i,j)\notin D(\bold\Pi)}(1-x_ix_j)\le\exp\left(-\frac{s^2}{2}+4.5\right).
\end{equation*}
{\bf (2)\/} Let $M_i$ be the perfect matching on $\text{Even}_{1,2}=[n]\setminus \text{Odd}_{1,2}$, induced by $\bold\Pi_i$. Then $M_1\cap M_2$ is the set of matched pairs common to $\bold\Pi_1$ and
$\bold\Pi_2$, and $M_1\Delta M_2$ is the edge set of the even circuits, of length $4$ at least, formed (in 
alternating fashion) by the matched pairs in $M_1$ and $M_2$.  So $D_1\cup D_2$ is the disjoint
union of $M_1\cap M_2$, $M_1\Delta M_2$ the set of pairs $(i,u_{\bold\Pi_{1,2}})$.

So, given $u_i$, $i\in [n]$,
\begin{equation}\label{sumuiuj}
\begin{aligned}
&\sum_{(i\neq j)\in D^c_1\cap D^c_2}\!\!\!\!\!\!\!u_iu_j=\sum_{(i\neq j)}u_iu_j-\sum_{(i\neq j)\in D_1\cup D_2}
\!\!\!\!\!\!\!u_iu_j\\
&=\sum_{(i\neq j)}u_iu_j\,\,-\sum_{(i,j)\in M_1\cap M_2}\!\!\!\!\!\!\!u_iu_j\,\,-\sum_{(i,j)\in M_1\Delta M_2}
\!\!\!\!\!\!\!u_iu_j\,\,
-\sum_{i\in \text{Odd}_{1,2}}\!\!\!\!\!u_iu_{\bold\Pi_{1,2}(i)}\\
&= \frac{1}{2}\Bigl(\sum_{i\in [n]} u_i\Bigr)^2-\frac{1}{2}\sum_{i\in [n]}u_i^2\,\,-\sum_{(i,j)\in M_1\cap M_2}\!\!\!\!\!\!\!u_iu_j\,\,-\!\sum_{(i,j)\in E_{1,2}}u_iu_j;
\end{aligned}
\end{equation}
here $E_{1,2}$ is the edge set of the odd cycles and the even circuits, formed by $\bold\Pi_1$ and $\bold\Pi_2$. This exact formula certainly implies that 
\begin{equation*}
\frac{1}{2}\Bigl(\sum_{i\in [n]} u_i\Bigr)^2-3\sum_{i\in [n]}u_i^2\,\,\,\,\le\sum_{(i\neq j)\in D^c_1\cap D^c_2}\!\!\!\!\!\!\!u_iu_j\,\,\,\le\,\,\,\, \frac{1}{2}\Bigl(\sum_{i\in [n]} u_i\Bigr)^2.
\end{equation*}
Therefore we bound
\begin{align*}
&\sum_{(i\neq j)\in D^c_1\cap D^c_2}\bigl[x_ix_j+y_iy_j-(x_i\wedge y_i)(x_j\wedge y_j)\bigr]\\
&\ge\sum_{(i\neq j)}\bigl[x_ix_j+y_iy_j-(x_i\wedge y_i)(x_j\wedge y_j)\bigr]-3\sum_{i\in [n]}(x_i^2+y_i^2)
\\
&\ge\frac{s_1^2}{2}+\frac{s_2^2}{2}-\frac{s_{1.2}^2}{2}-3\sum_{i\in [n]}(x_i^2+y_i^2).
\end{align*}
Furthermore
\begin{align*}
&\bigl[x_ix_j+y_iy_j-(x_i\wedge y_i)(x_j\wedge y_j)\bigr]^2\ge \bigl[x_ix_j+y_iy_j-(x_ix_j\wedge y_iy_j)\bigr]^2\\
&\qquad\qquad\ge\left(\frac{x_ix_j+y_iy_j}{2}\right)^2\ge \frac{1}{8}(x_i^2x_j^2+y_i^2y_j^2).
\end{align*}
So
\begin{align*}
&\sum_{(i\neq j)\in D^c_1\cap D^c_2}\bigl[x_ix_j+y_iy_j-(x_i\wedge y_i)(x_j\wedge y_j)\bigr]^2\\
& \ge\frac{1}{8}\sum_{(i\neq j)\in D^c_1\cap D^c_2}(x_i^2x_j^2+y_i^2y_j^2)\\
&\ge \frac{1}{16}\Biggl(\sum_{i\in[n]}x_i^2\Biggr)^2+ \frac{1}{16}\Biggl(\sum_{i\in[n]}y_i^2\Biggr)^2-\sum_{i\in [n]} \bigl(x_i^4+y_i^4\bigr).
\end{align*}
As 
\[
\sum_{i\in [n]}(x_i^4+y_i^4)\le \sum_{i\in [n]}(x_i^2+y_i^2),
\]
we obtain 
\begin{align*}
&\prod_{(i\neq j)\in D^c_1\cap D^c_2}\!\!\!\!\!\!\!
\bigl[1-x_ix_j-y_iy_j+(x_i\wedge y_i)(x_j\wedge y_j)\bigr]\\
&\le\exp\left(-\frac{s_1^2}{2}-\frac{s_2^2}{2}+\frac{s_{1,2}^2}{2}\right)\\
&\times\exp\Biggl[-\frac{1}{32}\Biggl(\sum_{i\in[n]}x_i^2\Biggr)^2-\frac{1}{32}\Biggl(\sum_{i\in[n]}y_i^2\Biggr)^2
+4\sum_{i\in [n]}(x_i^2+y_i^2)\Biggr].
\end{align*}
It remains to observe that $-\frac{z^2}{32}+4z\le 128$.
\end{proof}

\subsection{Bounds for $\pr(\bold\Pi)$, probability of a fixed point, and the likely $|\text{Odd}\,(\bold\Pi)|$}

Here are our first applications of Lemma \ref{simple1,2}.
\begin{Lemma}\label{Odd(P)=m} Denoting $m=|\text{Odd}\,(\bold\Pi)|$,
\[
\pr(\bold\Pi)\le_b\left\{\begin{aligned}
&\frac{e^{-n^{1/2}}}{(n+m-2)!!},&&\bold\Pi\text{ has a fixed point},\\
&\frac{1}{(n+m-1)!!},&&\bold\Pi\text{ has no fixed point}.\end{aligned}\right.
\]
\end{Lemma}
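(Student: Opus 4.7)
I begin from the integral formula \eqref{p(Pstable)=} of Lemma \ref{P(partstab)=}. Apply the pair-product bound from Lemma \ref{simple1,2}(1), $\prod_{(i,j)\notin D(\bold\Pi)}(1-x_ix_j)\le_b e^{-s^2/2}$ with $s:=\sum_i x_i$, and in the fixed-point case also $1-x_k\le e^{-x_k}$ to get $\prod_{k\neq h^*}(1-x_k)\le e\cdot e^{-s}$. Setting $\nu=n$ when there is no fixed point and $\nu=n-1$ otherwise, $\pr(\bold\Pi)$ is bounded by the integral of $\bigl(\prod_{h\in\text{Odd}(\bold\Pi)}x_h\bigr)\cdot e^{-s^2/2}\cdot\varepsilon(s)$ over $[0,1]^\nu$, where $\varepsilon(s)=1$ or $\varepsilon(s)\le_b e^{-s}$.

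Next I change variables $x_i=sv_i$, $v_\nu=1-\sum_{i<\nu}v_i$. Lemma \ref{intervals1} bounds the joint density by $\tfrac{s^{\nu-1}}{(\nu-1)!}f_{\bold L}(\bold v)$, cleanly separating the $s$- and $\bold v$-pieces. Writing $\prod_{h\in\text{Odd}(\bold\Pi)}x_h=s^m\prod_h v_h$ and computing the $\bold v$-integral as $\ex\bigl[\prod_h L_h^{(\nu)}\bigr]=(\nu-1)!/(\nu+m-1)!$ via the identity \eqref{int,prod} with $\alpha_h=1$ on the $m$ indices of $\text{Odd}(\bold\Pi)$ and $0$ elsewhere, the bound becomes
\[
\pr(\bold\Pi)\le_b\frac{1}{(\nu+m-1)!}\int_0^{\infty}s^{\nu+m-1}e^{-s^2/2}\varepsilon(s)\,ds.
\]

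Finally, I evaluate the $s$-integral in each case. In the no-fixed-point case, $\nu=n$ and $n+m$ is even (both $n$ and the total odd-cycle size are even), so the integral equals $2^{(n+m-2)/2}\Gamma\bigl(\tfrac{n+m}{2}\bigr)$; combining with $(n+m-1)!=(n+m-1)!!\cdot 2^{(n+m-2)/2}\bigl(\tfrac{n+m-2}{2}\bigr)!$ and $\Gamma(\tfrac{n+m}{2})=\bigl(\tfrac{n+m-2}{2}\bigr)!$ gives $\pr(\bold\Pi)\le_b 1/(n+m-1)!!$. In the fixed-point case, $\nu=n-1$ and $n+m$ is odd (the fixed point subtracts $1$); writing $n+m-2=2\ell+1$, the bound becomes $1/(n+m-2)!\cdot\int_0^{\infty}s^{2\ell+1}e^{-s^2/2-s}\,ds$ up to constants. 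Since $(n+m-2)!=(n+m-2)!!\cdot 2^\ell\ell!$ and $\int_0^{\infty}s^{2\ell+1}e^{-s^2/2}\,ds=2^\ell\ell!$, it suffices to prove
\[
\int_0^{\infty}s^{2\ell+1}e^{-s^2/2-s}\,ds\le_b e^{-\sqrt{n}}\cdot 2^\ell\ell!.
\]
This is a Laplace-type estimate: the shift $s=\sqrt{2\ell+1}+t$ (with $\sqrt{2\ell+1}\sim\sqrt{n+m}$) expands $s^{2\ell+1}e^{-s^2/2}$ as a Gaussian in $t$ of width $O(1)$, and the extra factor $e^{-s}$ contributes $e^{-\sqrt{n+m}+O(1)}\le e^{-\sqrt{n}+O(1)}$ at the peak while the residual Gaussian integration in $t$ yields a harmless constant. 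This Laplace estimate for the effect of the $e^{-s}$ factor is the principal (though standard) technical step; everything else is bookkeeping with the identity \eqref{int,prod} and relations between factorials and double factorials.
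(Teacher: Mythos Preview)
Your proof is correct and follows essentially the same approach as the paper: bound the pair product by $e^{-s^2/2}$ (and $\prod_{k\neq h^*}(1-x_k)$ by $e^{-s}$ in the fixed-point case), reduce the multidimensional integral to a one-dimensional integral in $s$ via the Dirichlet identity \eqref{int,prod}, and then either evaluate that integral exactly or apply a Laplace-type estimate around $s^*\sim(n+m-2)^{1/2}$ to extract the factor $e^{-n^{1/2}}$. Your detour through Lemma~\ref{intervals1} and $\ex\bigl[\prod_h L_h^{(\nu)}\bigr]$ is equivalent to the paper's direct use of \eqref{int,prod} after dropping the constraint $\max_i x_i\le 1$.
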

\begin{proof} For the second case, by \eqref{p(Pstable)=} and Lemma \ref{simple1,2}, 
\[
\pr(\bold\Pi)\le_b \text{Int}(m):=\overbrace {\idotsint}^{n}_{\bold x\in [0,1]^{n}}e^{-\tfrac{s^2}{2}}\!\!\!\!\prod_{h=n-m+1}^n\!\!\!\!\!x_h\, d\bold x.
\]
Disregarding the constraint $\max_i x_i\le 1$, and using \eqref{int,prod},
we obtain 
\begin{equation}\label{Dis}
\begin{aligned}
\text{Int}(m)&\le \frac{1}{(n+m-1)!}\int_0^{\infty}e^{-\tfrac{s^2}{2}}s^{n+m-1}\,ds\\
&=\frac{(n+m-2)!!}{(n+m-1)!}=\frac{1}{(n+m-1)!!}.
\end{aligned}
\end{equation}

If $\bold\Pi$ has a fixed point $h^*$, then using 
\[
\prod_{k\neq h^*}(1-x_k)\le e^{-s},\quad s:=\sum_{k\neq h^*}x_k,
\]
 we obtain that
\[
\pr(\bold\Pi)\le_b\frac{1}{(n+m-2)!}\int_0^{\infty}e^{-s-\frac{s^2}{2}}s^{n+m-2}\,ds.
\]
A quick glance at the integrand shows that the dominant contribution to the integral comes from $s$
 within, say, $\log n$ distance from the integrand's maximum point 
\[
s^*=(n+m-2)^{1/2}-\frac{1}{2}+O\bigl(n^{-1/2}\bigr),
\]
$(n+m-2)^{1/2}$ being the maximum point of $e^{-s^2/2} s^{n+m-2}$. So the above integral is of order
\[
e^{-n^{1/2}}\int_0^{\infty}e^{-\frac{s^2}{2}}s^{n+m-2}\,ds=e^{-n^{1/2}}(n+m-3)!!,
\]
whence
\[
\pr(\bold\Pi\text{ is stable})\le_b\frac{e^{-n^{1/2}}}{(n+m-2)!!}.
\]
\end{proof}
Now the total number of permutations $\bold\Pi$ of $[n]$ with a fixed point and $|\text{Odd}(\bold\Pi)=
m$ is at most 
\[
n\binom{n-1}{m}m! (n-m-2)!!=\frac{n!}{(n-m-1)!!}.
\]
\begin{Corollary}\label{nofix}
\[
\pr(\text{stable }\bold\Pi\text{'s have a fixed point})=O\bigl(n^2e^{-\sqrt{n}}\bigr)\to 0.
\]
\end{Corollary}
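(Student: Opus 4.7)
The plan is to bound the expected number of stable partitions $\bold\Pi$ of $[n]$ possessing a fixed point and conclude by Markov's inequality. Decomposing according to $m := |\text{Odd}(\bold\Pi)|$ and combining the combinatorial count displayed just before the corollary with the bound of Lemma \ref{Odd(P)=m},
\[
\!\!\!\sum_{\bold\Pi\text{ has fixed pt}}\!\!\!\!\pr(\bold\Pi)\;\le_b\;\sum_m\frac{n!}{(n-m-1)!!}\cdot\frac{e^{-\sqrt n}}{(n+m-2)!!}\;=\;e^{-\sqrt n}\sum_m a_m,
\]
where $a_m := n!\bigl/\bigl[(n-m-1)!!\,(n+m-2)!!\bigr]$. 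For even $n$ the admissible $m$ are the odd integers in $\{3,5,\dots,n-1\}$: each non-singleton odd cycle has length $\ge 3$, and the parity of the pair-matching on the remaining $n-1-m$ elements forces $m$ odd.

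The heart of the argument is the elementary bound $\sum_m a_m = O(n^2)$. First I would compute the base case $a_3$ in closed form, using $(n-1)!=(n-1)!!(n-2)!!$, $(n+1)!!=(n+1)(n-1)!!$, and $(n-4)!!=(n-2)!!/(n-2)$:
\[
a_3 \;=\; \frac{n!}{(n-4)!!\,(n+1)!!} \;=\; \frac{n\,(n-1)!!\,(n-2)!!\,(n-2)}{(n-2)!!\,(n+1)\,(n-1)!!} \;=\; \frac{n(n-2)}{n+1} \;=\; O(n).
\]
Next I would exhibit the telescoping ratio
\[
\frac{a_{m+2}}{a_m} \;=\; \frac{(n-m-1)!!\,(n+m-2)!!}{(n-m-3)!!\,(n+m)!!} \;=\; \frac{n-m-1}{n+m} \;<\; 1,
\]
so $(a_m)_{m\ge 3}$ is strictly decreasing. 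Hence $a_m\le a_3 = O(n)$ uniformly in $m$, and summing over at most $n/2$ admissible values yields $\sum_m a_m = O(n^2)$. Thus the expected number of stable partitions with a fixed point is $O(n^2 e^{-\sqrt n})$, and Markov's inequality delivers the corollary.

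No real obstacle is anticipated. The substantive work---the saddle-point evaluation that extracted the decisive factor $e^{-\sqrt n}$ from the additional product $\prod_{k\neq h^*}(1-x_k)$ in the integrand for $\pr(\bold\Pi)$---is already absorbed into Lemma \ref{Odd(P)=m}; what remains here is purely double-factorial bookkeeping. A sharper analysis would reveal Gaussian-like decay $a_m\asymp n\,e^{-m^2/n}$, improving the polynomial prefactor to $O(n^{3/2})$, but the crude monotonicity bound is already far more than enough.
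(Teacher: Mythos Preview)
Your proposal is correct and follows essentially the same route as the paper: both combine the count $n!/(n-m-1)!!$ with the Lemma~\ref{Odd(P)=m} bound, observe that the resulting summand $a_m$ is maximized at $m=3$, and bound the sum by (number of terms)$\times a_3=O(n)\cdot O(n)=O(n^2)$. Your version is simply more explicit---you compute $a_3$ in closed form and verify the monotonicity via the ratio $a_{m+2}/a_m=(n-m-1)/(n+m)$---whereas the paper states the maximality at $m=3$ without justification.
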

\begin{proof} By Lemma \ref{simple1,2} and the union bound, the probability in question is
of order
\begin{align*}
&e^{-n^{1/2}}\sum_{m\ge 3}\frac{n!}{(n-m-1)!!\,(n+m-2)!!}\\
&\quad\le e^{-n^{1/2}}n \left.\frac{n!}{(n-m-1)!!\,(n+m-2)!!}\right|_{m=3}= O\bigl(n^2e^{-n^{1/2}}\bigr).
\end{align*}
\end{proof}
\noindent 
Our original proof in \cite{Pit2} was considerably more involved, and reliant on the problematic 
existence of the exponential bounds, the issue we touched upon in the previous sections, and will stop
bringing up in the sequel. 

From now on we focus on stable partitions without a fixed point. Here is another low hanging fruit. 
\begin{Corollary}\label{fruit} Denoting by $\text{Odd}\,(\bold\Pi)$ the set of members in the odd cycles
of stable partitions,
\[
\pr\bigl(|\text{Odd}\,(\bold\Pi)|\ge n^{1/2}\log n\bigr)\le_b \exp(-\log^2 n/3),
\]
i.e. with super-polynomially high probability (quite surely in terminology of \cite{KnuMotPit}) the total
length of all odd cycles is below $n^{1/2}\log n$. 
\end{Corollary}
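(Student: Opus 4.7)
The approach is a straightforward Markov/union-bound argument combining Lemma \ref{Odd(P)=m} with a crude enumeration of the relevant fixed-point-free permutations. Since Tan's theorem guarantees that all stable partitions of a given instance share the same odd cycles, $|\text{Odd}(\bold\Pi)|$ is an instance invariant, and the event of interest is contained in the event that at least one stable, fixed-point-free $\bold\Pi$ with $|\text{Odd}(\bold\Pi)|\ge n^{1/2}\log n$ exists. Markov's inequality then gives
\[
\pr\bigl(|\text{Odd}(\bold\Pi)|\ge n^{1/2}\log n\bigr)\le\sum_{m\ge n^{1/2}\log n}N(n,m)\,\max_{\bold\Pi:\,|\text{Odd}|=m}\pr(\bold\Pi),
\]
where $N(n,m)$ counts the fixed-point-free permutations of $[n]$ whose odd cycles have total length $m$ (the sum running over even $m\le n$).

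For the enumeration I would use the crude bound
\[
N(n,m)\le\binom{n}{m}\,m!\,(n-m-1)!!=\frac{n!}{(n-m)!}\,(n-m-1)!!,
\]
obtained by choosing the $m$-vertex set of the odd cycles, arranging it into odd cycles of length $\ge 3$ (bounded trivially by $m!$), and pairing the remaining $n-m$ elements into transpositions. Combining with the probability bound $\pr(\bold\Pi)\le_b 1/(n+m-1)!!$ of Lemma \ref{Odd(P)=m} and using the double-factorial identities $(2k-1)!!=(2k)!/(2^k k!)$ to convert both double factorials to ordinary ones, the summand rearranges into the clean product
\[
N(n,m)\,\pr(\bold\Pi)\le_b\prod_{k=1}^{m}\frac{n-m+2k}{n+k}.
\]

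Each factor equals $1-(m-k)/(n+k)\le\exp\bigl(-(m-k)/(n+m)\bigr)$, so the full product is bounded by $\exp\bigl(-m(m-1)/(2(n+m))\bigr)$. For $m\ge n^{1/2}\log n$ with $m\le n$, this exponent is at most $-\tfrac12\log^2 n\,(1-o(1))$, and the Gaussian-type tail $\sum_{m\ge n^{1/2}\log n}\exp\bigl(-m(m-1)/(2(n+m))\bigr)$ is dominated by its first term up to a factor of $O(n^{1/2}/\log n)$. Since $\log(n^{1/2}/\log n)=o(\log^2 n)$, this polynomial prefactor is absorbed into the $o(1)$ correction in the exponent, and the total is $\le_b\exp(-\log^2 n/3)$ for $n$ large — the weaker constant $1/3$ (rather than the natural $1/2$) providing exactly the slack needed to absorb the polynomial prefactor together with the implicit $\le_b$ constant.

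The only slightly nontrivial step is the algebraic simplification into the product form $\prod_{k=1}^m(n-m+2k)/(n+k)$; once in this shape, the elementary inequality $1-x\le e^{-x}$ and a one-line tail estimate finish the job. No delicate asymptotic analysis, integration, or variance calculation is needed, which makes this corollary a genuine low-hanging fruit of the probability bound in Lemma \ref{Odd(P)=m}.
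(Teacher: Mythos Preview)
Your argument is correct and follows essentially the same route as the paper: union bound over fixed-point-free $\bold\Pi$ with $|\text{Odd}(\bold\Pi)|=m\ge m_n$, the crude enumeration $\binom{n}{m}m!\,(n-m-1)!!=n!/(n-m)!!$, and the probability bound from Lemma~\ref{Odd(P)=m}. The only difference is cosmetic: the paper reduces $n!/\bigl((n-m)!!\,(n+m-1)!!\bigr)$ via Stirling's formula and the entropy inequality $(1+x)\log(1+x)+(1-x)\log(1-x)\ge x^2$ to obtain $\exp(-m^2/(2n))$, whereas you rewrite the same ratio as the telescoping product $\prod_{k=1}^m(n-m+2k)/(n+k)$ and apply $1-x\le e^{-x}$ directly, arriving at the asymptotically equivalent $\exp\bigl(-m(m-1)/(2(n+m))\bigr)$; both then handle the tail sum identically.
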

\begin{proof} Denote $m_n=\lceil n^{1/2}\log n\rceil$. The total number of potential stable partitions with an even $|\text{Odd}\,(\bold\Pi)|=m\ge 4$ is at most 
\[
\binom{n}{m}m! (n-m-1)!!=\frac{n!}{(n-m)!!}.
\]
So, by Lemma \ref{simple1,2}, Stirling formula, and the inequality
\[
(1+x)\log(1+x)+(1-x)\log(1-x)\ge x^2,
\]
 the probability in question is of order
\begin{align*}
&\sum_{m=m_n}^n \frac{n!}{(n-m)!!\,(n+m-1)!!}\le_b\sum_{m=m_n}^n\frac{n^n}{(n-m)^{\frac{n-m}{2}}
(n+m)^{\frac{n+m}{2}}}\\
&\le \sum_{m=m_n}^n\exp\Bigl(-\frac{m^2}{2n}\bigr)
\le_b n^{1/2}\!\!\!\!\!\!\!\!\!\int\limits_{x\ge m_n/n^{1/2}}\!\!\!\!\!\!\!e^{-\frac{x^2}{2}}\,dx\ll e^{-\log^2 n/3}.
\end{align*}
\end{proof}
Focusing on the likely stable partitions, we may and will consider only the permutations $\bold\Pi$ 
without a fixed point and with $|\text{Odd}\,(\bold\Pi)|\le m_n$.

\subsection{Sharp estimate of $\pr(\bold\Pi)$}  
In steps,  we will chop off the peripheral parts of the integration cube $[0,1]^n$ till we get to its part narrow enough to allow us to  approximate the integrand in the formula \eqref{p(Pstable)=} within
$1+o(1)$ factor, so that the accumulative error cost is of order $e^{-\Theta(\log^2 n)}$.

{\bf Step 1.\/} For the first reduction, we set $s_n=n^{1/2}+3\log n$, and define
\begin{equation}\label{P_1}
\pr_1(\bold\Pi):= \overbrace {\idotsint}^{n}_{\bold x\in C_1} F(\bold x)\,d\bold x,\quad C_1:=\{\bold x\in [0,1]^n:
s\le s_n\},
\end{equation}
\begin{Lemma}\label{C1}
\begin{equation}\label{P(Pi)-P_1(Pi)}
\pr(\bold\Pi)-\pr_1(\bold\Pi)\le_b \frac{e^{-3\log^2n}}{(n+m-1)!!}.
\end{equation}
\end{Lemma}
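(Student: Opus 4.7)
The plan is to proceed exactly as in the proof of Lemma \ref{Odd(P)=m}, but with the $s$-integral truncated to $s \ge s_n$, and to show that this truncation costs a factor of order $e^{-3\log^2 n}$.

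First, by \eqref{p(Pstable)=} in the fixed-point-free case and Lemma \ref{simple1,2},
\[
\pr(\bold\Pi)-\pr_1(\bold\Pi) \;=\; \idotsint\limits_{\bold x\in [0,1]^n,\, s>s_n} F(\bold x)\,d\bold x
\;\le_b\;\idotsint\limits_{\bold x\in [0,1]^n,\, s>s_n}\!\! e^{-s^2/2}\!\!\prod_{h\in \text{Odd}(\bold\Pi)}\!\!\!\!x_h\, d\bold x.
\]
Dropping the constraint $\max_i x_i\le 1$ and using the Dirichlet identity \eqref{int,prod} to integrate over the simplex $\sum x_i = s$ for each fixed $s$, exactly as in \eqref{Dis}, I get
\[
\pr(\bold\Pi)-\pr_1(\bold\Pi)\;\le_b\;\frac{1}{(n+m-1)!}\int_{s_n}^{\infty}e^{-s^2/2}s^{n+m-1}\,ds.
\]

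Next I would analyze the tail $T:=\int_{s_n}^{\infty} g(s)\,ds$ with $g(s)=e^{-s^2/2}s^{n+m-1}$. The function $\log g$ is strongly concave with $(\log g)''(s)=-1-(n+m-1)/s^2\le -1$ and is maximized at $s^*=\sqrt{n+m-1}$. Since by Corollary \ref{fruit} we may restrict to $m\le m_n = \lceil n^{1/2}\log n\rceil$, we have $s^*\le \sqrt{n}+(m-1)/(2\sqrt{n})\le \sqrt{n}+\tfrac12\log n$, so
\[
s_n-s^*\;\ge\;3\log n-\tfrac12\log n\;=\;\tfrac{5}{2}\log n.
\]
Strong concavity gives the Gaussian-type bound $g(s)\le g(s^*)e^{-(s-s^*)^2/2}$ valid for all $s$, and the standard Mills ratio inequality $\int_a^{\infty}e^{-t^2/2}\,dt\le e^{-a^2/2}/a$ then yields
\[
T\;\le\;g(s^*)\cdot\frac{e^{-(s_n-s^*)^2/2}}{s_n-s^*}\;\le\;\frac{g(s^*)}{\tfrac52\log n}\,e^{-25\log^2 n/8}\;=\;o\!\left(e^{-3\log^2 n}g(s^*)\right),
\]
the exponent $25/8>3$ providing the necessary slack.

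Finally, the Laplace-type asymptotics (or direct comparison to the full Gaussian integral) give $\int_0^{\infty}g(s)\,ds=(n+m-2)!!\asymp g(s^*)\sqrt{\pi}$, so $T\le_b e^{-3\log^2 n}(n+m-2)!!$. Dividing by $(n+m-1)!$ and using $(n+m-2)!!/(n+m-1)!=1/(n+m-1)!!$ delivers \eqref{P(Pi)-P_1(Pi)}. The only delicate step is the tail estimate for $T$; the specific choice $s_n=n^{1/2}+3\log n$ is tuned precisely so that $(s_n-s^*)^2/2$ exceeds $3\log^2 n$ uniformly over the admissible range $m\le m_n$, but since $(25/8)\log^2 n-3\log^2 n = (1/8)\log^2 n\to\infty$, there is ample margin and no sharper analysis is required.
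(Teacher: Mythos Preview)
Your proof is correct and takes essentially the same approach as the paper: reduce to the one-dimensional tail integral $\tfrac{1}{(n+m-1)!}\int_{s_n}^\infty e^{-s^2/2}s^{n+m-1}\,ds$ via Lemma~\ref{simple1,2} and \eqref{int,prod}, then exploit log-concavity of the integrand under the standing hypothesis $m\le m_n$. The only difference is cosmetic packaging of the tail bound---the paper uses the linear tangent estimate $\int_{s_n}^\infty e^{h}\,ds\le e^{h(s_n)}/(-h'(s_n))$ together with a second-order Taylor estimate of $h(s_n)-h(s^*)$, whereas you use the global Gaussian bound $g(s)\le g(s^*)e^{-(s-s^*)^2/2}$ (from $(\log g)''\le -1$) and Mills' ratio, arriving at exponent $25/8$ in place of the paper's $4$.
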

\begin{proof} By Lemma \ref{simple1,2}, 
\begin{equation}\label{P-P_1,1}
\pr(\bold\Pi)-\pr_1(\bold\Pi)\le_b \frac{1}{(n+m-1)!}\int\limits_{s\ge s_n}\!\!\exp\left(-\frac{s^2}{2}\right) s^{n+m-1}\,ds.
\end{equation}
The integrand, write it as $e^{h(s)}$, attains its maximum at $s_{n,m}=(n+m-1)^{1/2}$,
and 
\begin{align*}
e^{h(s(n,m))}&=\exp\left(-\frac{n+m-1}{2}\right)(n+m-1)^{\frac{n+m-1}{2}}\\
&\le_b n^{1/2}(n+m-2)!! .
\end{align*}
Further
\begin{align*}
h(s_n)&=h(s(n,m))+(1+o(1))\frac{h^{\prime\prime}(s(n,m))}{2}(s_n-s(n,m))^2\\
&\le h(s(n,m))-4\log^2n,\\
h^\prime(s_n)&=-s_n+\frac{n+m-1}{s_n}\le -5\log n.
\end{align*}
Now, since $h(s)$ is concave, we have 
\[
\int_{s\ge s_n}e^{h(s)}\,ds \le e^{h(s_n)}\int_{s\ge s_n}\exp\bigl(h^\prime(s_n)(s-s_n)\bigr)=
\frac{e^{h(s_n)}}{-h^\prime(s_n)}.
\]
Therefore
\begin{align*}
\pr(\bold\Pi)-\pr_1(\bold\Pi)&\le_b\frac{n^{1/2}e^{-4\log^2n}}{\log n}\frac{(n+m-2)!!}{(n+m-1)!}
\le \frac{e^{-3\log^2n}}{(n+m-1)!!}.
\end{align*}
\end{proof}

Next, motivated by the inequalities  \eqref{exactexp} and \eqref{exactexp2}, we will  derive sharp asymptotics, on progressively smaller $C_j\subset C_1$,
for the leading sums\linebreak $\sum_{i\in [n_1]} x_i^2$,  $\sum_{i\in [n_1/2]}\!x_i
x_{i+n_1/2}$, $(n_1:=n-m)$,
and obtain sufficiently strong upper bounds for the secondary sums  $\sum_{h\in \text{Odd}\,(\bold\Pi)}x_h^2$,  $\sum_{i\in [n]}x_i^4$ and $\sum_{i\in [n]}x_i^6$.
We will end up  with a rather sharp asymptotic formula for $\prod_{(i,j)}(1-x_ix_j)$ on the terminal dominant subset of $C_1$.\\

{\bf Step 2.\/} With $s:=\sum_{i\in [n]} x_i$, define $\bold u=\{u_i=x_i/s: i\in [n]\}$. 
Introduce $t_1(\bold u)=\max_{i\in [n]}u_i$. Define 
$
C_2=\Bigl\{\bold x\in C_1: t_1(\bold u)\le 1.01\frac{\log^2n}{n}\Bigr\},
$
and let $P_j(\bold\Pi)$ be the  integral of $F(\bold x)$ over $C_j$. Introduce $L_1,\dots,L_{n}$, the lengths of the $n$ consecutive subintervals of $[0,1]$ obtained by choosing, at random, $n-1$ points in $[0,1]$.  Applying  
Lemma \ref{intervals1}, the identity \eqref{int,prod} and 
Lemma \ref{sumsofLs} (1) with $\nu=n$, we have
\begin{align*}
&P_1(\bold\Pi)-P_2(\bold\Pi)\le\!\!\overbrace {\idotsint}^{n}_{\bold x\ge \bold 0}\! s^m e^{-\frac{s^2}{2}}\chi\!\left\{t_1(\bold u)\ge 1.01\frac{\log^2n}{n}\right\}\!
\prod_{h\in\text{Odd}(\bold\Pi)}\!\!\!\!\! u_h \,\,d\bold x\notag\\
&\le \frac{\ex\left[\chi\left\{\max_{i\in [n]} L_i\ge 1.01\frac{\log^2n}{n}\right\}\prod_{h=1}^m L_h\right]}
{(n-1)!}\int_0^{\infty}\!\! e^{-\frac{s^2}{2}}s^{m+n-1}\,ds.
\end{align*}
By the union bound, the expected value is below
\begin{multline}\label{union}
n\ex\left[\chi\left\{ L_n\ge 1.01\frac{\log^2n}{n}\right\}\prod_{h=1}^{m-1} L_h\right]\le
 n!\overbrace {\idotsint}^{n-1}_{z_1+\cdots+z_{n-1}\atop\le 1-1.01\frac{\log^2n}{n}}\prod_{h=1}^{m-1} z_h\,d\bold z\\
= n!\left(1-1.01\frac{\log^2n}{n}\right)^{n+m-2}\overbrace {\idotsint}^{n-1}_{z_1+\cdots+z_{n-1}\le 1}\prod_{h=1}^{m-1} z_h\,d\bold z\le \frac{n! e^{-1.01\log^2 n}}{(n+m-2)!}.
\end{multline}
\begin{Lemma}\label{P1-P2}
\begin{equation}\label{P_1(Pi)-P_2(Pi)}
P_1(\bold\Pi)-P_2(\bold\Pi)\le \frac{e^{-\log^2 n}}{(n+m-1)!!}.
\end{equation}
\end{Lemma}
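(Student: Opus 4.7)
The plan is to formalize the chain of inequalities that has already been set up in the text immediately preceding the lemma, and then evaluate the resulting elementary quantities against the target bound $e^{-\log^2 n}/(n+m-1)!!$.

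First, I would write $P_1(\bold\Pi)-P_2(\bold\Pi)$ explicitly as the integral of $F(\bold x)$ over the slice $C_1\setminus C_2=\{\bold x\in C_1:t_1(\bold u)\ge 1.01\log^2 n/n\}$. Using Lemma \ref{simple1,2} to drop the $(1-x_ix_j)$ product as $\exp(-s^2/2)$ and changing variables from $\bold x$ to $(s,\bold u)$ via Lemma \ref{intervals1} (the density bound $s^{n-1}/(n-1)!$ times the spacings density), I would arrive at the two-factor estimate already displayed:
\[
P_1(\bold\Pi)-P_2(\bold\Pi)\le \frac{\ex\!\left[\chi\{\max_i L_i\ge 1.01\log^2 n/n\}\prod_{h=1}^m L_h\right]}{(n-1)!}\int_0^{\infty}\!\!e^{-s^2/2}s^{m+n-1}\,ds.
\]

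Next I would verify the two numerical inputs. For the expectation, the union bound over the choice of the index attaining the maximum, followed by the Dirichlet integral identity \eqref{int,prod}, gives
\[
\ex\!\left[\chi\{\max_i L_i\ge 1.01\log^2 n/n\}\prod_{h=1}^m L_h\right]\le \frac{n!\,(1-1.01\log^2 n/n)^{n+m-2}}{(n+m-2)!}\le \frac{n!\,e^{-1.01\log^2 n}}{(n+m-2)!},
\]
which is precisely \eqref{union}. For the Gaussian integral I would invoke $\int_0^{\infty}e^{-s^2/2}s^{m+n-1}\,ds=(n+m-2)!!$ (the same identity used in \eqref{Dis}).

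Finally, I would combine the two to obtain
\[
P_1(\bold\Pi)-P_2(\bold\Pi)\le \frac{n!\,e^{-1.01\log^2 n}}{(n-1)!\,(n+m-2)!}\cdot(n+m-2)!! = \frac{n\,e^{-1.01\log^2 n}}{(n+m-3)!!},
\]
and then rewrite $(n+m-3)!!=(n+m-1)!!/(n+m-1)$ to get the explicit factor $n(n+m-1)\le n^2$ in the numerator; since $n^2 e^{-1.01\log^2 n}=\exp(2\log n-1.01\log^2 n)\le e^{-\log^2 n}$ for all sufficiently large $n$, the bound \eqref{P_1(Pi)-P_2(Pi)} follows. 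The only step that is not essentially mechanical is confirming that the constants $1.01$ and $3\log n$ (from the definition of $s_n$ and $C_2$) are large enough that the $2\log n$ loss from the union bound is absorbed, which is immediate; there is no serious analytic obstacle here, as all the work has been done in Lemmas \ref{simple1,2} and \ref{intervals1}.
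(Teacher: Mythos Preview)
Your proposal is correct and follows exactly the paper's own argument: the paper's proof of Lemma~\ref{P1-P2} \emph{is} the chain of displayed inequalities immediately preceding the lemma statement, and you have faithfully reproduced and completed it, including the union-bound step \eqref{union} and the Gaussian-moment identity from \eqref{Dis}. One trivial slip: $n(n+m-1)\le n^2$ is false for $m\ge 2$, but of course $n(n+m-1)\le 2n^2$ (since $m\le m_n\ll n$) and the extra factor of $2$ is still swallowed by $e^{-0.01\log^2 n}$, so the conclusion stands.
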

In addition, since $m\le m_n =\lceil n^{1/2}\log n\rceil$ and $s\le s_n=n^{1/2}+3\log n$, it follows from \eqref{P_1(Pi)-P_2(Pi)} that on $C_2$
\begin{equation}\label{sumx_h^2}
\begin{aligned}
&\sum_{h\in \text{Odd}\,(\bold\Pi)}x_h\le_b s n^{-1}\log^3 n,\quad 
\sum_{h\in \text{Odd}\,(\bold\Pi)}x_h^2 \le_b n^{-1/2}\log^5 n,\\
&\qquad\quad\sum_{i=1}^n x_i^{4}\le_b n^{-1}\log^8n, \quad \sum_{i=1}^n x_i^{6}\le_bn^{-2}\log^{12}n.
\end{aligned}
\end{equation}

{\bf Step 3.\/} With $\xi:=\sum_{i\in [n_1]}x_i$, define $\bold v=\{v_i=x_i/\xi: i\in [n_1]\}$. 
Introduce $t_2(\bold v)=\sum_{i\in [n_1]}\!v_i^2$. Define 
\[
C_3=\left\{\bold x\in C_2: \Bigl|\frac{n_1}{2}t_2(\bold v)-1\Bigr|\le n^{-\sigma}\right\},\quad \sigma<1/3.
\]
Introduce $\mathcal L_1,\dots, \mathcal L_{n_1}$, the lengths of the $n_1$ consecutive subintervals of $[0,1]$ in the partition of $[0,1]$
by the random $n_1-1$ points.  Analogously to {\bf Step 2\/},
we have
\begin{equation*}\label{P_2(Pi)-P_3(Pi)}
\begin{aligned}
&P_2(\bold\Pi)-P_3(\bold\Pi)\le\!\!\overbrace {\idotsint}^{n}_{\bold x\ge \bold 0}\!e^{-\frac{s^2}{2}}\chi\!\left\{\!\Bigl|\frac{n_1}{2}t_2(\bold v)-1\Bigr|\ge n^{-\sigma}\!\right\}\!
\prod_{h\in\text{Odd}(\bold\Pi)}\!\!\!\!\!\!\! u_h \,\,d\bold x\\
&\le \frac{\pr\left(\Bigl|\frac{n_1}{2}t_2(\bold{\mathcal  L})-1\Bigr|\ge n^{-\sigma}\right)}
{(n_1-1)!(2m-1)!}\iint\limits_{\eta,\,\xi\ge 0}e^{-\frac{(\xi+\eta)^2}{2}}\xi^{n_1-1}\eta^{2m-1}\,d\xi\, d\eta\\
&\le \frac{e^{-\Theta(n^{1/3-\sigma})}}{(n+m-1)!}\int_0^{\infty}e^{-\frac{s^2}{2}} s^{n+m-1}\,ds
=\frac{e^{-\Theta(n^{1/3-\sigma})}}{(n+m-1)!!}.
\end{aligned}
\end{equation*}
\begin{Lemma}\label{P2-P3}
\begin{equation}\label{P_2(Pi)-P_3(Pi)}
P_2(\bold\Pi)-P_3(\bold\Pi)\le \frac{e^{-\Theta(n^{1/3-\sigma})}}{(n+m-1)!!}.
\end{equation}
\end{Lemma}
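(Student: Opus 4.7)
The plan is to mirror Step 2 (Lemma~\ref{P1-P2}) exactly, replacing the first-order statistic $t_1(\bold u)=\max_i u_i$ by the second-order statistic $t_2(\bold v)=\sum_{i\in [n_1]}v_i^2$, and invoking Lemma~\ref{sumsofLs}(1) with $s=2$ in place of the union bound used in \eqref{union}. First I would bound $F(\bold x)\le_b e^{-s^2/2}\prod_{h\in\text{Odd}(\bold\Pi)}x_h$ via Lemma~\ref{simple1,2}, and relax the integration domain from $C_2\setminus C_3$ to $\{\bold x\ge\bold 0\}\cap\{|\tfrac{n_1}{2}t_2(\bold v)-1|\ge n^{-\sigma}\}$ (legitimate since the integrand is nonnegative). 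Then I would change variables separately on the even block $[n_1]$ and the odd block $\text{Odd}(\bold\Pi)$: write $\bold x_{\text{even}}=\xi\bold v$ with $\xi=\sum_{i\in[n_1]}x_i$ and $\bold v\in\Delta^{n_1-1}$, and $\bold x_{\text{odd}}=\eta\bold w$ with $\eta=\sum_h x_h$ and $\bold w\in\Delta^{m-1}$. The Jacobians contribute $\xi^{n_1-1}\eta^{m-1}$, and $\prod_h x_h=\eta^m\prod_h w_h$.

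Because the bad event depends only on $\bold v$, Lemma~\ref{intervals1} identifies the $\bold v$-marginal on $\Delta^{n_1-1}$ with the distribution of the spacing vector $\bold{\mathcal L}$ of $n_1-1$ uniform points in $[0,1]$, while \eqref{int,prod} evaluates $\int_{\Delta^{m-1}}\prod_h w_h\,d\bold w_-=1/(2m-1)!$. This reduces the estimate to
\[
\frac{\pr\bigl(|\tfrac{n_1}{2}t_2(\bold{\mathcal L})-1|\ge n^{-\sigma}\bigr)}{(n_1-1)!\,(2m-1)!}\iint_{\xi,\eta\ge 0}e^{-(\xi+\eta)^2/2}\,\xi^{n_1-1}\eta^{2m-1}\,d\xi\,d\eta.
\]
The double integral, after the substitution $s=\xi+\eta$, $t=\xi/s$, factors into a Beta integral $B(n_1,2m)=(n_1-1)!\,(2m-1)!/(n+m-1)!$ and the Gaussian moment $\int_0^\infty e^{-s^2/2}s^{n+m-1}\,ds=\Theta((n+m-2)!!)$. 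Using $(n+m-1)!=(n+m-1)!!\,(n+m-2)!!$, all the factorials and double factorials telescope and leave a single factor $1/(n+m-1)!!$ as a prefactor of the probability.

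It remains to apply Lemma~\ref{sumsofLs}(1) with $s=2$, $\nu=n_1=n(1-o(1))$, and $\eps_\nu=n^{-\sigma}$. The hypothesis $\eps_\nu\gg\nu^{-1/(s+1)}=\nu^{-1/3}$ holds precisely because $\sigma<1/3$---this is the origin of the constraint imposed in the definition of $C_3$---giving $\pr\le\exp(-c\,n^{-\sigma} n_1^{1/3})=\exp(-\Theta(n^{1/3-\sigma}))$. Combining the pieces yields \eqref{P_2(Pi)-P_3(Pi)}. The only delicate point, and the main obstacle if any, is this last bookkeeping step, which pins the range $\sigma<1/3$ in $C_3$ to the available exponent $1/(s+1)=1/3$ in Lemma~\ref{sumsofLs}(1); the rest of the argument is a mechanical replay of Lemma~\ref{P1-P2}.
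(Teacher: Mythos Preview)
Your proposal is correct and follows essentially the same route as the paper's proof: bound $F(\bold x)$ by $e^{-s^2/2}\prod_h x_h$ via Lemma~\ref{simple1,2}, split the integration into the even block $[n_1]$ and the odd block $\text{Odd}(\bold\Pi)$ with radial variables $\xi,\eta$, recognize the $\bold v$-marginal as the spacing law $\bold{\mathcal L}$ on $[n_1]$, and reduce to the product of $\pr\bigl(|\tfrac{n_1}{2}t_2(\bold{\mathcal L})-1|\ge n^{-\sigma}\bigr)$ with the Beta--Gaussian integral, which collapses to $1/(n+m-1)!!$. Your identification of the constraint $\sigma<1/3$ with the hypothesis $\eps_\nu\gg\nu^{-1/3}$ of Lemma~\ref{sumsofLs}(1) at $s=2$ is exactly the point.
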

\noindent Similarly, with $t_3(\bold v):=\sum_{i\in [n_1/2]}\,v_iv_{i+n_1/2}$ and
\[
C_4:=\left\{\bold x\in C_3:\left|2n_1t_3(\bold v)-1\right|\le n^{-\sigma}\right\},
\]
\begin{Lemma}\label{P3-P4}
\begin{equation}\label{P_3(Pi)-P_4(Pi)}
P_3(\bold\Pi)-P_4(\bold\Pi)\le\frac{e^{-\Theta(n^{1/3-\sigma})}}{(n+m-1)!!}.
\end{equation}
\end{Lemma}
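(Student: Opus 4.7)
The plan is to recycle the proof of Lemma \ref{P2-P3} essentially verbatim, with a single substitution: invoke part (2) of Lemma \ref{sumsofLs} in place of part (1). The constraint defining $C_4\subset C_3$ involves the bilinear sum $t_3(\bold v)=\sum_{i\in [n_1/2]}v_iv_{i+n_1/2}$ rather than the diagonal quadratic $t_2(\bold v)=\sum_{i\in [n_1]}v_i^2$, but the analytic template is identical. Note that $n_1=n-m$ is even, since $n$ is even and every even cycle of $\bold\Pi$ has length $2$, so the pairing $i\mapsto i+n_1/2$ is well-defined.

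After bounding $F(\bold x)\le_b e^{-s^2/2}\prod_{h\in\text{Odd}(\bold\Pi)}x_h$ via Lemma \ref{simple1,2} and enlarging the integration domain from $C_3\setminus C_4$ to $\{\bold x\ge\bold 0:|2n_1 t_3(\bold v)-1|\ge n^{-\sigma}\}$, which only weakens the bound since the new integrand is nonnegative, I would pass to ``polar'' coordinates on the even- and odd-cycle blocks separately: $\xi=\sum_{i\in [n_1]}x_i$, $\eta=\sum_{h\in\text{Odd}(\bold\Pi)}x_h$, $\bold v=(x_i/\xi)_{i\in [n_1]}$, and $\bold u=(x_h/\eta)_{h\in\text{Odd}(\bold\Pi)}$, with Jacobian $\xi^{n_1-1}\eta^{m-1}$ and $\prod_h x_h=\eta^m\prod_h u_h$. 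Crucially, the indicator depends only on $\bold v$ while the Gaussian weight $e^{-(\xi+\eta)^2/2}$ depends only on the radii.

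The $(m-1)$-simplex integral of $\prod_h u_h$ yields $1/(2m-1)!$ by \eqref{int,prod}, and integration of the indicator against the uniform measure on the $(n_1-1)$-simplex yields $\frac{1}{(n_1-1)!}\pr(|2n_1 t_3(\bold{\mathcal L})-1|\ge n^{-\sigma})$ via Lemma \ref{intervals1}, with $\bold{\mathcal L}$ the $n_1$ random interval lengths. Applying Lemma \ref{sumsofLs}(2) with $\nu=n_1$ (which is of order $n$, since $m\le m_n=O(n^{1/2}\log n)$) and $\eps_\nu=n^{-\sigma}$ bounds this probability by $O(\exp(-c\,n^{1/3-\sigma}))$. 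The remaining radial integral $\iint_{\xi,\eta\ge 0}e^{-(\xi+\eta)^2/2}\xi^{n_1-1}\eta^{2m-1}\,d\xi\,d\eta$, combined with the prefactor $[(n_1-1)!(2m-1)!]^{-1}$, collapses via $s=\xi+\eta$, $t=\eta/s$ and the Beta identity into $(n+m-2)!!/(n+m-1)!=1/(n+m-1)!!$, exactly as at the end of the proof of Lemma \ref{P2-P3}.

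I do not foresee any genuine obstacle: the argument is a mechanical copy of Step 3, and the only ingredient specific to the bilinear structure of $t_3$ is the concentration bound in Lemma \ref{sumsofLs}(2), which the paper has already established with the correct rate $\nu^{1/3}$ in the exponent; this is precisely what makes the final decay come out to $\exp(-\Theta(n^{1/3-\sigma}))$ as claimed.
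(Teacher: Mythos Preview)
Your proposal is correct and follows exactly the approach the paper intends: the paper itself dispatches Lemma \ref{P3-P4} with the remark that the argument is identical to Step~3 (Lemma \ref{P2-P3}), and you have spelled out that identical argument, correctly substituting the bilinear concentration estimate Lemma \ref{sumsofLs}(2) for the quadratic one in part (1). Your observation that $n_1=n-m$ is even (so the pairing $i\mapsto i+n_1/2$ is well-defined) and your reduction of the radial integral via the Beta identity to $1/(n+m-1)!!$ are both on the mark.
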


Combining the estimates \eqref{P(Pi)-P_1(Pi)}, \eqref{P_1(Pi)-P_2(Pi)}, \eqref{P_2(Pi)-P_3(Pi)},
we have 
\begin{Lemma}\label{summary} Let $\bold\Pi$ be such that $m=|\text{Odd}\,(\bold\Pi)|\le m_n$.
Then 
\begin{equation*}
P(\bold\Pi)-P_4(\bold\Pi)\le \frac{e^{-\Theta(\log^2 n)}}{(n+m-1)!!},
\end{equation*}
where $P_4(\bold\Pi)$ is the integral of $F(\bold x)$ over $C_4\subset [0,1]^n$ defined by the
additional constraints:  with $s:=\sum_{i\in [n]}x_i$, $s_n=n^{1/2}+3\log n$, $\xi=\sum_{i\in [n_1]}x_i$,
\begin{align}
&\qquad\qquad s\le s_n,\quad \max_{i\in [n]} x_i\le 1.02\, \frac{s\log^2 n}{n};\label{s<,max x_i<}\\
&\left|\frac{n_1\sum_{i\in [n_1]}x_i^2}{2\xi^2}-1\right|\le n^{-\sigma}, \,
\,\left|\frac{2n_1\sum_{i\in [n_1/2]}x_i x_{i+n_1/2}}{\xi^2}-1\right|\le n^{-\sigma}.\label{sumu_iu_{i+n1/2}/(sumu_j)^2}
\end{align}
\end{Lemma}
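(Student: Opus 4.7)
The plan is to view the statement as a bookkeeping assembly of the four step-bounds already established. Writing $P_0(\bold\Pi) := P(\bold\Pi)$ and telescoping,
\[
P(\bold\Pi) - P_4(\bold\Pi) = \sum_{j=0}^{3}\bigl(P_j(\bold\Pi) - P_{j+1}(\bold\Pi)\bigr),
\]
each of the four differences on the right is bounded, in order, by Lemma \ref{C1}, Lemma \ref{P1-P2}, Lemma \ref{P2-P3}, and Lemma \ref{P3-P4}. The first two contribute $O\bigl(e^{-3\log^2 n}/(n+m-1)!!\bigr)$ and $O\bigl(e^{-\log^2 n}/(n+m-1)!!\bigr)$; the last two contribute $O\bigl(e^{-\Theta(n^{1/3-\sigma})}/(n+m-1)!!\bigr)$, which is much smaller since $\sigma<1/3$. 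Taking the sum, the slowest decay $e^{-\Theta(\log^2 n)}$ from Steps 1--2 absorbs the others, yielding the claimed error bound.

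It remains to verify that the domain of integration $C_4$ implicit in $P_4(\bold\Pi)$ is precisely the one described by the constraints \eqref{s<,max x_i<}--\eqref{sumu_iu_{i+n1/2}/(sumu_j)^2}. The upper bound $s\le s_n$ is inherited from $C_1$. The condition $\max_i x_i \le 1.02\,s\log^2 n/n$ is simply the $C_2$ condition $t_1(\bold u)\le 1.01\log^2 n/n$ rewritten via $x_i = s u_i$, with a harmless $1\%$ cushion. The two inequalities in \eqref{sumu_iu_{i+n1/2}/(sumu_j)^2} are, respectively, the defining constraints of $C_3$ and $C_4$ once one substitutes $v_i = x_i/\xi$, so that $t_2(\bold v) = \xi^{-2}\sum_{i\in [n_1]} x_i^2$ and $t_3(\bold v) = \xi^{-2}\sum_{i\in [n_1/2]} x_i x_{i+n_1/2}$.

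There is no genuine obstacle; the only point that requires a moment's care is to confirm that the telescoping is dominated by Steps 1--2 rather than by the (much sharper) Steps 3--4, and that all intermediate sets are nested $C_4\subset C_3\subset C_2\subset C_1\subset [0,1]^n$, so that each difference $P_j - P_{j+1}$ is indeed the integral of the nonnegative integrand $F(\bold x)$ over $C_j\setminus C_{j+1}$, as implicitly used in the proofs of the four preceding lemmas.
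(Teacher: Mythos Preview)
Your proposal is correct and matches the paper's own argument, which simply says ``Combining the estimates \eqref{P(Pi)-P_1(Pi)}, \eqref{P_1(Pi)-P_2(Pi)}, \eqref{P_2(Pi)-P_3(Pi)}'' (the omission of \eqref{P_3(Pi)-P_4(Pi)} there is evidently a typo). Your additional verification that the constraints listed in the Lemma reproduce the nested chain $C_1\supset C_2\supset C_3\supset C_4$, and your remark that the $1.01$ versus $1.02$ discrepancy only enlarges the integration set and hence can only decrease $P(\bold\Pi)-P_4(\bold\Pi)$, are both correct and make the bookkeeping more explicit than the paper itself.
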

\noindent The constraint \eqref{sumu_iu_{i+n1/2}/(sumu_j)^2} involves only $\{x_i\}_{i\in [n_1]}$. Furthermore, 
given $s$, the constraint  \eqref{s<,max x_i<} imposes the uniform upper bound
for the {\it individual\/} components $x_i$, $i\in [n]$: no mixing the components $x_i$, $i\in [n_1]$,
and $x_h$, $h\in\text{Odd}\,(\bold\Pi)$, either. Also, this constraint  {\it implies\/} that 
\begin{equation}\label{max<}
\max_{i\in n]}x_i\le 4 n^{-1/2}\log^2 n =o(1),
\end{equation}
meaning that the constraint $\max_i x_i\le 1$ is superfluous. Moreover,  the inequality \eqref{max<}
yields  the  {\it equality\/} 
\[
\prod_{(i,j)\notin D(\bold\Pi)}\!(1-x_i x_j)=\exp\Biggl(\!-\sum_{(i,j)\notin D(\bold\Pi)}\!\Bigl(x_ix_j+\frac{x_i^2x_j^2}{2}
\Bigr)+O\Bigl(\sum_{i\in [n]} x_i^6\Bigr)\!\Biggr),
\]
that holds uniformly for $\bold x\in C_4$, with the remainder term $\ll n^{-1}$, see \eqref{sumx_h^2}.
It is the matter of simple algebra
to obtain from the constraints on $C_4$: 
\begin{Lemma}\label{asform} Uniformly for $m\le m_n$ and $\bold x\in C_4$,
\begin{equation}\label{F=sharp}
F(\bold x)=\exp\!\left(\!-\frac{s^2}{2}\left(\!1-\frac{3}{n}\!\right)-\frac{s^4}{n^2}+O(n^{-\sigma})\!\right)\!
\prod_{h\in \text{Odd}\,(\bold\Pi)}\!\!\!\! x_h.
\end{equation}
\end{Lemma}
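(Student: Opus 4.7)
The plan is to take the logarithm of $F(\bold x)$ on $C_4$ and evaluate it piece by piece, using the exact algebraic identities from the proof of Lemma \ref{simple1,2} together with the constraints defining $C_4$. The factor $\prod_{h\in\text{Odd}(\bold\Pi)} x_h$ appears verbatim in the target formula, so all the work lies in analyzing $\prod_{(i,j)\notin D(\bold\Pi)}(1-x_ix_j)$. Because \eqref{max<} gives $\max_i x_i = o(1)$ on $C_4$, the Taylor expansion $\log(1-z) = -z - z^2/2 + O(z^3)$ is uniformly valid, leading, as the paper notes just before the lemma, to
\begin{equation*}
-\log\prod_{(i,j)\notin D(\bold\Pi)}(1-x_ix_j) = \sum_{(i,j)\notin D(\bold\Pi)}\Bigl(x_ix_j+\frac{x_i^2x_j^2}{2}\Bigr) + O\Bigl(\sum_{i\in[n]} x_i^6\Bigr),
\end{equation*}
and the $\sum x_i^6$ remainder is $O(n^{-2}\log^{12}n) = o(n^{-\sigma})$ by \eqref{sumx_h^2}.

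For the quadratic sum I would invoke identity \eqref{exactexp}, which splits it as $\frac{s^2}{2} - \frac{1}{2}\sum_i x_i^2 - \sum_{i\in[n_1/2]} x_i x_{i+n_1/2} - \sum_{h\in\text{Odd}(\bold\Pi)} x_h x_{\bold\Pi(h)}$. The Odd-cycle sum is $o(n^{-\sigma})$ by AM-GM together with $\sum_h x_h^2 \le_b n^{-1/2}\log^5 n$ from \eqref{sumx_h^2}. The two principal sums are controlled by the second pair of $C_4$ constraints, giving $\sum_{i\in[n_1]} x_i^2 = (2\xi^2/n_1)(1+O(n^{-\sigma}))$ and $\sum_{i\in[n_1/2]} x_i x_{i+n_1/2} = (\xi^2/(2n_1))(1+O(n^{-\sigma}))$, whose combined contribution is $(3\xi^2/(2n_1))(1+O(n^{-\sigma}))$. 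To replace $\xi^2/n_1$ by $s^2/n$ I use $\xi = s - \sum_h x_h$ with $\sum_h x_h \le_b s n^{-1}\log^3 n$, which yields $\xi^2 = s^2(1+O(n^{-1}\log^3 n))$, and $n_1 = n - m$ with $m \le m_n = O(n^{1/2}\log n)$, which yields $n/n_1 = 1+O(n^{-1/2}\log n)$. Since $s^2/n = O(1)$ on $C_1$, all the relative errors absorb into $O(n^{-\sigma})$, so
\begin{equation*}
\sum_{(i,j)\notin D(\bold\Pi)} x_ix_j = \frac{s^2}{2} - \frac{3s^2}{2n} + O(n^{-\sigma}) = \frac{s^2}{2}\Bigl(1-\frac{3}{n}\Bigr) + O(n^{-\sigma}).
\end{equation*}

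The quartic sum is treated in parallel via \eqref{exactexp2}: its leading term $\frac{1}{2}(\sum_i x_i^2)^2$ combined with $\sum_i x_i^2 = 2s^2/n + O(n^{-\sigma})$ (inherited from the quadratic step plus the negligibility of $\sum_h x_h^2$) and $s^2/n = O(1)$ yields $(\sum_i x_i^2)^2 = 4s^4/n^2 + O(n^{-\sigma})$, while the leftover $\sum_i x_i^4$ and $\sum_{i\in[n_1/2]} x_i^2 x_{i+n_1/2}^2$ are $O(n^{-1}\log^8 n) = o(n^{-\sigma})$ by \eqref{sumx_h^2}. Hence $\frac{1}{2}\sum x_i^2 x_j^2 = s^4/n^2 + O(n^{-\sigma})$, and assembling everything in the exponent reproduces $-\frac{s^2}{2}(1-\frac{3}{n}) - \frac{s^4}{n^2} + O(n^{-\sigma})$, which gives the claimed formula after exponentiating and restoring the factor $\prod_h x_h$. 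The main obstacle is purely the bookkeeping of error terms: one has to verify that every secondary quantity -- Odd-cycle diagonal pairs, quartic diagonals, and the cubic Taylor remainder -- stays below $n^{-\sigma}$, which the bounds \eqref{sumx_h^2} together with $s\le s_n$ do allow, the tightest comparison being of relative size $n^{-1/2}\log n$ coming from the $m/n$ slack when replacing $n_1$ by $n$, which is comfortably below $n^{-\sigma}$ since $\sigma<1/3$ is already in force from the construction of $C_3$.
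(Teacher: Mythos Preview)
Your proof is correct and is exactly the ``simple algebra'' the paper refers to but does not spell out: you Taylor-expand the log of the product, use the exact decomposition \eqref{exactexp} for the quadratic sum and its $x_i^2$-analogue for the quartic sum, then replace $\xi^2/n_1$ by $s^2/n$ and absorb all secondary terms (odd-cycle diagonals, $\sum x_i^4$, cubic remainder) into $O(n^{-\sigma})$ via \eqref{sumx_h^2} and $\sigma<1/3$. One tiny nit: \eqref{exactexp2} as labeled is only the inequality, but you are really using the equality version of \eqref{exactexp} with $x_i$ replaced by $x_i^2$, which is of course valid.
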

Thus, introducing $\eta=\sum_{h\in \text{Odd}\,(\bold\Pi)}x_h$, so that as $s=\xi+\eta$, 
within the factor $1+O(n^{-\sigma})$ the integrand depends on $(\xi,\eta)$
and $\prod_h x_h$. Observe also that, on $C_4$,
\[
\max_{i\in [n_1]}\frac{x_i}{\xi}\sim\max_{i\in [n_1]}\frac{x_i}{s}\le 1.02\frac{\log^2 n}{n}\ll \frac{1}{\xi}.
\]
So denoting $\psi_n(s)=\tfrac{s^2}{2}\bigl(1-\tfrac{3}{n}\bigr)+\tfrac{s^4}{n^2}$, and applying Lemma \ref{intervals1}, \eqref{joint<}, we have: $P_4(\bold\Pi)$, the integral of $F(\bold x)$ over $C_4$, is given by
\begin{equation}\label{P_3(P)approx}
\begin{aligned}
&P_4(\bold\Pi)=\bigl(1+O(n^{-\sigma})\bigr)\!\!\overbrace {\idotsint}^{n}_{\bold x\in C_4}
e^{-\psi_n(\xi+\eta)}\prod_{h\in \text{Odd}\,(\bold\Pi)} x_h\,d\bold x\\
&=\bigl(1+O(n^{-\sigma})\bigr)\!\!\iint\limits_{\xi+\eta\le s_n}\!\!e^{-\psi_n(\eta+\xi)}\frac{\xi^{n_1-1}}{(n_1-1)!}\cdot \frac{\eta^{2m-1}}{(m-1)!}\,d\eta\,d\xi\\
&\cdot\pr\Biggl(\Bigl|\frac{n_1}{2}\!\sum_{i\in [n_1]}\mathcal L_i^2-1\Bigr|\le n^{-\sigma},\,\,
\Bigl|2n_1\!\!\sum_{i\in [n_1/2]}\mathcal L_i\mathcal L_{i+n_1/2}-1\Bigr|\le n^{-\sigma}\Biggr).
\end{aligned}
\end{equation}
From the step {\bf (4)\/} we know that the probability factor is at least $1-e^{-\log ^2n}$.
The double integral, denote it $ I_{n,m}$, is given by
\[
I_{n,m}=\frac{(2m-1)!}{(m-1)!\,(n+m-1)!}\int_{s\le s_n} e^{-\psi_n(s)} s^{n+m-1}\,ds.
\]
The integrand attains its maximum at  $\hat s =(n+m-1)^{1/2}-\Theta(n^{-1/2})$, so that
$s_n-\hat s\ge 2\log n$. 
and it is easy to show that
\[
\int\limits_{|s-\hat s|\ge \log n}\!\!\!\!\!\!e^{-\psi_n(s)}s^{n+m-1}\,ds\le e^{-\Theta(\log ^2n)}
\!\!\!\!\!\!\!\int\limits_{|s-s^*|\le\log n}\!\!\!\!\!\!\!e^{-\psi_n(a)}s^{n+m-1}\,ds.
\]
Besides, $s^4/n^2=1+O(m/n)$ for $|s-\hat s|\le \log n$.  Therefore
\begin{align*}
I_{n,m}&=e^{-1+O(m/n)}\frac{(2m-1)!}{(m-1)!\,(n+m-1)!}\int_{s\ge 0}e^{-\frac{s^2(1-3/n)}{2}}s^{n+m-1}\,ds\\
&=e^{-1+O(m/n)}\frac{(2m-1)!}{(m-1)!\,(n+m-1)!}\cdot\frac{(n+m-2)!!}{(1-3/n)^{(n+m)/2}}\\
&=e^{1/2+O(m/n)}\frac{(2m-1)!}{(m-1)!\,(n+m-1)!!}. 
\end{align*}
Since $m/n=O(n^{-1/2}\log n)$, and $\sigma<1/3$ in \eqref{P_3(P)approx}, we have proved
\begin{Lemma}\label{P_4(P)=exact} Uniformly for even $m\le m_n$ and $\bold\Pi$ with $|\text{Odd}\,(\bold\Pi)|=m$,
\[
P_4(\bold\Pi)=\bigl(1+O(n^{-\sigma})\bigr)\frac{e^{1/2}}{(n+m-1)!!}.
\]
Consequently, by Lemma \ref{summary} ,
\begin{equation}\label{P(P)sim!}
P(\bold\Pi)=\bigl(1+O(n^{-\sigma})\bigr)\frac{e^{1/2}}{(n+m-1)!!}.
\end{equation}
\end{Lemma}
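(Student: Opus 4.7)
The plan is to carry out the integration promised by Lemma \ref{summary} against the simplified integrand provided by Lemma \ref{asform}, and to verify that the accumulated error cost is $O(n^{-\sigma})$. The starting point is the display \eqref{P_3(P)approx}, which already represents $P_4(\bold\Pi)$ as a double integral in the aggregate variables $(\xi,\eta)=\bigl(\sum_{i\in[n_1]}x_i,\sum_{h\in\text{Odd}(\bold\Pi)}x_h\bigr)$ times the probability that the two spacing constraints \eqref{sumu_iu_{i+n1/2}/(sumu_j)^2} hold for the Dirichlet-uniform simplex. Applying Lemma \ref{sumsofLs} (both parts, with $\nu=n_1$ and $\eps_\nu=n^{-\sigma}$) shows that this probability factor is $1-e^{-\Theta(n^{1/3-\sigma})}=1+O(n^{-\sigma})$, so it is absorbed into the overall prefactor. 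What remains is the asymptotic evaluation of the double integral.

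First I would reduce the double integral to a single integral in $s=\xi+\eta$. Substituting $\eta=s-\xi$ and applying the Beta-function identity $\int_0^s (s-\eta)^{n_1-1}\eta^{2m-1}\,d\eta=s^{n+m-1}B(n_1,2m)$, a direct instance of the first identity in \eqref{int,prod}, collapses the factorials coming from Lemma \ref{intervals1} against those produced by the Beta function and yields
\[
\frac{1}{(n+m-1)!}\int_0^{s_n}e^{-\psi_n(s)}\,s^{n+m-1}\,ds,\qquad \psi_n(s)=\tfrac{s^2}{2}\bigl(1-\tfrac{3}{n}\bigr)+\tfrac{s^4}{n^2}.
\]

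The core of the argument is a Laplace-type analysis of this one-dimensional integral. The integrand is log-concave with unique interior maximum at $\hat s$ solving $\psi_n'(\hat s)=(n+m-1)/\hat s$, which gives $\hat s=(n+m-1)^{1/2}+O(n^{-1/2})$; since $s_n-\hat s\ge 2\log n$, the same concavity argument used in the proof of Lemma \ref{C1} localizes the integral to $|s-\hat s|\le \log n$ at super-polynomially small cost. On this window $s^4/n^2=1+O\bigl((m+\log n)/n\bigr)=1+O(n^{-1/2}\log n)$, so the quartic term of $\psi_n$ contributes a uniform factor $e^{-1+O(m/n)}$ that pulls outside the integral. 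The remaining Gaussian integral $\int_0^\infty e^{-s^2(1-3/n)/2}s^{n+m-1}\,ds$ is evaluated by the substitution $s\mapsto s/\sqrt{1-3/n}$ and the standard half-Gaussian moment (a consequence of the first identity in \eqref{int,prod}), producing $(n+m-2)!!/(1-3/n)^{(n+m)/2}$; and $(1-3/n)^{-(n+m)/2}=\exp(3/2+O(m/n))$, so combining with $e^{-1}$ yields the uniform factor $e^{1/2+O(m/n)}$.

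Assembling the pieces via $(n+m-1)!=(n+m-1)!!(n+m-2)!!$ and $m/n=O(n^{-1/2}\log n)\ll n^{-\sigma}$ for $\sigma<1/3$ gives $P_4(\bold\Pi)=\bigl(1+O(n^{-\sigma})\bigr)\,e^{1/2}/(n+m-1)!!$. Lemma \ref{summary} then upgrades this to the same estimate for $P(\bold\Pi)$, since its $e^{-\Theta(\log^2 n)}/(n+m-1)!!$ error is dominated by the $n^{-\sigma}$ relative error already present in $P_4(\bold\Pi)$. The main obstacle I anticipate is the bookkeeping of parities in the double-factorial identities throughout the Laplace step; a single mismatched parity between $(n+m-1)!!$ and $(n+m-2)!!$ would corrupt the leading constant $e^{1/2}$.
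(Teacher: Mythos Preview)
Your proposal is correct and follows essentially the same route as the paper: reduce \eqref{P_3(P)approx} to a one-dimensional integral in $s=\xi+\eta$ via the Beta/Dirichlet identity, localize by log-concavity near $\hat s=(n+m-1)^{1/2}+O(n^{-1/2})$, extract the constant $e^{-1}$ from the quartic term, evaluate the remaining Gaussian moment as $(n+m-2)!!/(1-3/n)^{(n+m)/2}$, and collapse factorials via $(n+m-1)!=(n+m-1)!!(n+m-2)!!$. Your bookkeeping is in fact cleaner than the paper's displayed formulas---the $(2m-1)!/(m-1)!$ factor appearing in the paper's expression for $I_{n,m}$ is a typo (the $\eta$-integration of $\prod_h x_h$ produces $\eta^{2m-1}/(2m-1)!$, not $\eta^{2m-1}/(m-1)!$), and your Beta-function collapse to $1/(n+m-1)!$ is the correct statement; your parity worry is unfounded since $n$ and $m$ are both even, making $n+m-1$ odd as required for the half-Gaussian moment identity.
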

\noindent {\bf Note.\/} The formula \eqref{P(P)sim!} works for $m=0$ as well, meaning that 
\[
\pr(\text{matching }\bold\Pi\text{ is stable})=\bigl(1+O(n^{-\sigma})\bigr)\frac{e^{1/2}}{(n-1)!!}. 
\]
So the expected number of stable {\it matchings\/} tends to $e^{1/2}$ as $n\to\infty$, \cite{Pit1}.

\subsection{The expectations of the numbers of stable partitions and odd
parties} 
\begin{Theorem}\label{E[On]} Let $\mathcal S_n$ and $\mathcal O_n$ denote the total number of odd
stable partitions $\bold\Pi$, and the total number of odd cycles.  Then
\begin{align}
\ex\bigl[\mathcal S_n\bigr]
&=\bigl(1+O(n^{-1/4})\bigr)\frac{\Gamma(1/4)}{\sqrt{\pi e}\, 2^{1/4}}\,n^{1/4},\label{ESn=}\\
\ex\bigl[\mathcal O_n\bigr]&\lesssim \frac{\Gamma(1/4)}{4\sqrt{\pi e}\, 2^{1/4}}\,n^{1/4}\log n.\label{EOn<}
\end{align}
\end{Theorem}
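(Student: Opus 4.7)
The plan is to decompose $\ex[\mathcal S_n]=\sum_{\bold\Pi} P(\bold\Pi)$ by $m=|\text{Odd}\,(\bold\Pi)|$, truncate to $m\le m_n=\lceil n^{1/2}\log n\rceil$ using Corollary \ref{fruit} (whose tail contributes only $e^{-\Theta(\log^2 n)}$), and invoke Lemma \ref{P_4(P)=exact}. For fixed-point-free $\bold\Pi$ the integer $m$ is even, and the number of admissible $\bold\Pi$ with given $m$ is $\binom{n}{m}N_m(n-m-1)!!$, where $N_m$ is the number of permutations of $[m]$ whose cycles are all odd of length $\geq 3$. Its exponential generating function factorises as
\begin{equation*}
\sum_{m\ge 0}\frac{N_m}{m!}z^m \;=\; \exp\!\Bigl(\tfrac12\log\tfrac{1+z}{1-z}-z\Bigr) \;=\; e^{-z}\sqrt{\tfrac{1+z}{1-z}}.
\end{equation*}

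Combining these ingredients yields $\ex[\mathcal S_n]=(1+o(1))\,e^{1/2}\sum_m (N_m/m!)\,R_{n,m}$, with $R_{n,m}:=m!\binom{n}{m}(n-m-1)!!/(n+m-1)!!$. A short telescoping computation gives $R_{n,m}=\prod_{j=0}^{m/2-1}(n-2j)/(n+2j+1)$, and Taylor-expanding its logarithm delivers $R_{n,m}=(1+O(m_n^3/n^2))\,e^{-m^2/(2n)}$ uniformly for $m\le m_n$. Singularity analysis at the dominant singularity $z=1$, where $e^{-z}\sqrt{(1+z)/(1-z)}\sim \sqrt 2\,e^{-1}(1-z)^{-1/2}$, gives $N_m/m!\sim \sqrt 2\,e^{-1}/\sqrt{\pi m}$. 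A standard Laplace step (substitute $m=u\sqrt n$ and use $\int_0^\infty u^{-1/2}e^{-u^2/2}\,du=2^{-3/4}\Gamma(1/4)$) evaluates the Gaussian-weighted sum and, after multiplication by $e^{1/2}$, produces the constant $\Gamma(1/4)/(\sqrt{\pi e}\,2^{1/4})$ in \eqref{ESn=}.

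For $\ex[\mathcal O_n]$ I would replace $N_m$ by $M_m$, the total number of odd cycles across all permutations of $[m]$ whose cycles are all odd of length $\geq 3$. Marking cycles gives EGF $f(z)e^{f(z)}$ with $f(z)=\tfrac12\log\tfrac{1+z}{1-z}-z$. Since $f(z)e^{f(z)}\sim (e^{-1}/\sqrt 2)\log(1/(1-z))\cdot(1-z)^{-1/2}$ as $z\to 1$, singularity analysis yields $M_m/m!\sim e^{-1}\log m/\sqrt{2\pi m}$. For $m\le m_n$ we have $\log m\le \tfrac12\log n+O(\log\log n)$; inserting this bound and redoing the Laplace step gives the upper bound \eqref{EOn<}. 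An equality is not attempted because the crude bound $\log m\le \log m_n$ is sharp only near the upper end of the summation range.

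The main technical obstacle is ensuring all the approximations---the ratio $R_{n,m}$, the coefficient asymptotic from singularity analysis, and the replacement of the discrete sum by a Gaussian integral---are uniform in $m$ with cumulative relative error $O(n^{-1/4})$, so that the claimed rate in \eqref{ESn=} is recovered. Each individual error is at most $O(m_n^3/n^2)=O(n^{-1/2}\log^3 n)$ or better, so the delicate point is tracking the constants through the Laplace step and controlling the boundary contribution of the integral near $u=0$, where the integrand $u^{-1/2}e^{-u^2/2}$ is integrable but unbounded.
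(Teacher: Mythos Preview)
Your proposal is correct and takes essentially the same route as the paper: decompose $\ex[\mathcal S_n]$ by $m$, truncate via Corollary \ref{fruit}, apply Lemma \ref{P_4(P)=exact}, extract $N_m/m!\sim\sqrt{2}\,e^{-1}/\sqrt{\pi m}$ from the same EGF $e^{-z}\sqrt{(1+z)/(1-z)}$, and evaluate the resulting Gaussian-weighted sum; for $\mathcal O_n$ the paper likewise differentiates the bivariate EGF to obtain $\sum_k k f(m,k)\sim \tfrac{\log m}{2}\,f(m)$ and then repeats the same summation. The one point worth adding is that the $O(n^{-1/4})$ rate in \eqref{ESn=} actually comes from choosing $\sigma=1/4$ in Lemma \ref{P_4(P)=exact}, not from the approximations you list (the $R_{n,m}$ and coefficient errors are all sharper).
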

\begin{proof}  For even $m$, let $f(m)$ denote the total number of permutations of $m$, having only
odd cycles, each of length $3$ at least. For even $k$, let
$f(m,k)$ denote the total number of permutations of $[m]$ having only $k$ odd cycles, each of length 
$3$, at least;  so $f(m)=\sum_k f(m,k)$. Then the total number of permutations of $[n]$ with $k$ odd cycles, each of length $3$ at least, with $m$ elements overall, and even cycles of length $2$ only, is 
$\binom{n}{m}f(m,k)(n-m-1)!!$. So, by
Lemma \ref{P_4(P)=exact}, we have
\begin{equation}\label{E[Sn]sim}
\ex\bigl[\mathcal S_n\bigr]=\bigl(e^{1/2}+O(n^{-\sigma})\bigr)\sum_{m\le m_n}\!\!\binom{n}{m}
\frac{f(m)(n-m-1)!!}{(n+m-1)!!}.
\end{equation}
A standard argument from permutation enumeration shows that 
\begin{equation}\label{genf(m)}
\sum_{m\ge 4}\frac{f(m)}{m!}\,x^m =\exp\!\!\left(\sum_{\text{odd }j\ge 3}\frac{x^j}{j}\right)
=e^{-x}\sqrt{\frac{1+x}{1-x}}, \quad (|x|<1).
\end{equation}
So, using the saddle-point method (Flajolet and Sedgewick \cite{FlaSed}),
\begin{equation}\label{f(m)sim}
f(m)=\bigl(e^{-1}\sqrt{2}+O(m^{-1})\bigr)\frac{(2m-1)!!}{2^m}.
\end{equation}
With a bit of work, based on Stirling
formula, it follows that
\begin{align*}
&\bigl(e^{1/2}+O(n^{-\sigma})\bigr)\binom{n}{m}\frac{f(m)(n-m-1)!!}{(n+m-1)!!}\\
&=\bigl(1+O(n^{-\sigma}+m^{-1})\bigr)\sqrt{\frac{2}{\pi e}}\cdot m^{-1/2}\exp\left(-\frac{m^2}{2n}\right).
\end{align*}
Combining this formula with \eqref{E[Sn]sim}, and choosing $\sigma=1/4$, we complete the proof of \eqref{ESn=}.

A bivariate extension of \eqref{genf(m)} is
\begin{equation*}
\sum_{m\ge 4}\frac{x^m}{m!} \sum_{k\ge 2}y^kf(m,k)=\exp\!\!\left(y\!\sum_{\text{odd }j\ge 3}\frac{x^j}{j}\right).
\end{equation*}
Differentiating this identity at $y=1$, we obtain
\begin{multline}\label{gensumkf(m,k)}
\sum_{m\ge 4}\frac{x^m}{m!} \sum_{k\ge 2}kf(m,k)=
\sum_{\text{odd }j\ge 3}\frac{x^j}{j}\exp\left(\sum_{\text{odd }j\ge 3}\frac{x^j}{j}\right)\\
=\left(\frac{1}{2}\log\frac{1}{1-x}+\frac{1}{2}\log(1+x)-1\right) e^{-x}\sqrt{\frac{1+x}{1-x}}.
\end{multline}
So, analogously to \eqref{f(m)sim}, we obtain
\begin{equation*}
\sum_{k\ge 2}kf(m,k)=\bigl(1+O(m^{-1})\bigr)\frac{e^{-1}\sqrt{2}\log m}{2}\cdot\frac{(2m-1)!!}{2^m}.
\end{equation*}
Combining this formula with the counterpart of \eqref{E[Sn]sim}, i.e. with
\begin{equation*}
\ex\bigl[\mathcal O_n\bigr]\le \bigl(e^{1/2}+O(n^{-\sigma})\bigr)\sum_{m\le m_n}\!\!\binom{n}{m}
\frac{(n-m)!!}{(n+m-1)!!}\sum_{k\ge 2}kf(m,k),
\end{equation*}
we have \eqref{EOn<}.
\end{proof}

Tan \cite{Tan}, \cite{Tan1} defined a maximum stable matching for an instance $I$ as a matching $M$ of maximum size
(number of matched pairs) such that no pair of members, both having a partner in $M$, prefer
each other to their partners. In short, no two members assigned in $M$, but not to each
other, block $M$. He proved that a maximum stable matching has size $(n-\mathcal O)/2$, (see also
Manlove \cite{Man}).

\begin{Corollary}\label{maxst} Let $\mathcal M_n$ denote the size of the maximum stable matching
for the random instance $I_n$. Then
\begin{align*}
&\quad\ex\bigl[\mathcal M_n\bigr]\ge \frac{n - c n^{1/4}\log n}{2},\quad c=\frac{\Gamma(1/4)}{3\sqrt{\pi e}\, 2^{1/4}},\\
&\pr\Bigl(\mathcal M_n\ge \frac{n - n^{1/4}\log^2 n}{2}\Bigr)\ge 1- O\bigl(\log ^{-1} n)\bigr),
\end{align*}
so that the number of members unassigned in the maximum stable matching is likely to be of order
$O\bigl(n^{1/4}\log ^2n)$.
\end{Corollary}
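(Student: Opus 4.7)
The strategy is to reduce both assertions to the first-moment bound \eqref{EOn<} via Tan's identity $\mathcal M_n = (n - \mathcal O_n)/2$, which holds deterministically for every instance, since all stable partitions share the same odd cycles and each odd cycle contributes exactly one member that cannot be matched in any internally-stable matching. Once this identity is in hand, $\mathcal M_n$ is a simple affine function of $\mathcal O_n$, and everything comes down to the distribution of $\mathcal O_n$, which has already been controlled in Theorem \ref{E[On]}.

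For the first inequality, taking expectations in Tan's identity gives $\ex[\mathcal M_n] = (n - \ex[\mathcal O_n])/2$, so it suffices to upper-bound $\ex[\mathcal O_n]$. By \eqref{EOn<}, $\ex[\mathcal O_n] \le (1+o(1))\,\tfrac{\Gamma(1/4)}{4\sqrt{\pi e}\, 2^{1/4}}\, n^{1/4}\log n$; since $(1+o(1))/4 < 1/3$ for $n$ sufficiently large, this is bounded by $c\, n^{1/4}\log n$ with $c = \Gamma(1/4)/(3\sqrt{\pi e}\, 2^{1/4})$, yielding the mean bound. For the finitely many small values of $n$, the inequality is trivial after at worst enlarging the constant, since $\mathcal M_n \le n/2$ always.

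For the tail bound, I would apply Markov's inequality directly to the nonnegative integer-valued random variable $\mathcal O_n$:
\[
\pr\bigl(\mathcal O_n \ge n^{1/4}\log^2 n\bigr) \le \frac{\ex[\mathcal O_n]}{n^{1/4}\log^2 n} = O(\log^{-1} n),
\]
again invoking \eqref{EOn<}. On the complementary event, Tan's identity forces $\mathcal M_n = (n-\mathcal O_n)/2 \ge (n - n^{1/4}\log^2 n)/2$, so
\[
\pr\Bigl(\mathcal M_n \ge \tfrac{n - n^{1/4}\log^2 n}{2}\Bigr) \ge 1 - O(\log^{-1} n).
\]
There is essentially no obstacle here: given the nontrivial asymptotic analysis already carried out for $\ex[\mathcal O_n]$, the corollary is pure Markov-inequality bookkeeping. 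The only minor point worth a sentence of verification is that $\mathcal O_n$ is almost surely a well-defined finite random variable, which follows from Tan's existence theorem and the fact that the odd parties are common to all stable partitions of a given instance.
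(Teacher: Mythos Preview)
Your proposal is correct and matches the paper's approach exactly: the corollary is stated without proof precisely because it follows immediately from Tan's identity $\mathcal M_n=(n-\mathcal O_n)/2$ together with the bound \eqref{EOn<} on $\ex[\mathcal O_n]$, via linearity of expectation for the first part and Markov's inequality for the second. The introduction even sketches the Markov step in the form $\pr\bigl(|M(I_n)|\ge (n-\omega(n)n^{1/4}\log n)/2\bigr)\ge 1-O(\omega(n)^{-1})$, which specializes to your argument with $\omega(n)=\log n$.
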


\subsection{A ``maximally stable'' matching in the random instance $I_n$} For a given set of preferences, Abraham, Bir\'o and Manlove
\cite{AbrBirMan} (see also \cite{Man})
defined a ``maximally stable'' matching as a perfect matching $M$  on $[n]$ that is blocked by the smallest number of pairs, $B(I_n)$, of members not matched with each other in $M$. (Two members block $M$
if they prefer each other to their partners in $M$.)  A weaker corollary of the bound in \cite{AbrBirMan}
states that $B(I_n)\le d(I_n)\mathcal O(I_n)$, where $O(I_n)$ is the number of odd parties (common to
all stable partitions for $I_n$) and $d(I_n)$ is the length of the longest preference list.  

Once we estimate $R_{\text{max}}$, defined as the largest rank of a predecessor in the uniformly random instance $I_n$, we will be able to apply the ABM inequality via replacing $d(I_n)$ with $R_{\text{max}}$.

For a stable $\bold\Pi$ (without a fixed point), introduce $X(\bold\Pi):=\max_i X_{i,\bold\Pi^{-1}(i)}$.
Intuitively, $\max_{\bold\Pi} X(\bold\Pi)$ controls the worst predecessor's rank. From Lemma \ref{summary}, and the proof of Theorem \ref{E[On]}, it follows that 
\begin{equation*}
\pr\Biggl(\max_{\bold\Pi}X(\bold\Pi)\ge \frac{\log^2n}{n^{1/2}}\Biggr)\le e^{-\Theta(\log^2 n)}.
\end{equation*}
A bit more generally, for every $\delta>0$,
\begin{equation}\label{P_delta}
\pr_{\delta}:=\pr\Biggl(\max_{\bold\Pi}X(\bold\Pi)\ge \frac{\log^{1+\delta}n}{n^{1/2}}\Biggr)\le e^{-\Theta(\log^{1+\delta} n)}.
\end{equation}
Denoting $x_n=\frac{\log^{1+\delta}n}{n^{1/2}}$, let $R_i:=|\{j\neq i\,:\, X_{i,j}\le x_n\}|$. Since $X_{i,j}$
are independent $[0,1]$-Uniforms, we have $R_i\overset{\mathcal D}\equiv\text{Bin}(n-1, p=x_n)$. Let $c>1$; by the classic (Chernoff) bound for the tails of the binomial distribution,
\[
\pr(R_i\ge c(n-1)x_n)\le \exp(-f(c)(n-1)x_n),\quad f(c):=1+c(\log c -1).
\]
So, $\pr(R_i\ge 2nx_n)\le e^{-n^{1/2}/3}$ if $n$ is large enough. Invoking \eqref{P_delta}, we have
then
\begin{align*}
\pr\Bigl(R_{\text{max}}\ge 2n^{1/2}\log^{1+\delta}n\Bigr)&\le P_{\delta}+\sum_{i\in [n]}\pr\Bigl(R_i\ge
2n^{1/2}\log^{1+\delta}n\Bigr)\\
&\le e^{-\Theta(\log^{1+\delta} n)}+n e^{-n^{1/2}/3}.
\end{align*}
Thus
\begin{Lemma}\label{R_max<} For $\delta>0$ arbitrarily small, quite surely $R_{\text{max}}$ is of order $n^{1/2}\log^{1+\delta}n$.
\end{Lemma}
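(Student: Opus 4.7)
The plan is a two-stage union-bound argument that marries the stability-side control on predecessor $X$-values, already delivered by \eqref{P_delta}, with a Chernoff estimate on the rank-counting functional. Set $x_n := n^{-1/2}\log^{1+\delta}n$ and, for each $i \in [n]$, define $R_i := |\{j \neq i : X_{i,j} \le x_n\}|$ exactly as in the lead-in. The key observation is that ranks in $i$'s preference list are determined by sorting $\{X_{i,j}\}_{j\neq i}$ in increasing order, so on the event $\{\max_{\bold\Pi} X(\bold\Pi) < x_n\}$ every stable predecessor of $i$ has rank at most $R_i$. Hence $R_{\text{max}} \le \max_i R_i$ on that event.

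The first step is to invoke \eqref{P_delta} directly, giving $\pr(\max_{\bold\Pi} X(\bold\Pi) \ge x_n) \le e^{-\Theta(\log^{1+\delta} n)}$. This is the substantive input, inherited from Lemma \ref{summary}; it simultaneously caps the predecessor $X$-values across the (potentially polynomially many) stable partitions.

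The second step is purely binomial. Since the $X_{i,j}$ are i.i.d.\ $[0,1]$-Uniforms, $R_i \sim \text{Bin}(n-1,x_n)$ with mean $\sim n^{1/2}\log^{1+\delta}n$. The classical Chernoff tail gives
\[
\pr\bigl(R_i \ge 2(n-1)x_n\bigr) \le \exp\bigl(-f(2)\,(n-1)x_n\bigr), \qquad f(2) = 1 + 2(\log 2 - 1) > 0,
\]
so each tail is at most $e^{-c\,n^{1/2}\log^{1+\delta}n}$ for some $c>0$, well below $n^{-1}$. A union bound over $i \in [n]$, combined with the event from the first step, yields
\[
\pr\bigl(R_{\text{max}} \ge 2 n^{1/2}\log^{1+\delta}n\bigr) \le e^{-\Theta(\log^{1+\delta}n)} + n\,e^{-c\,n^{1/2}\log^{1+\delta}n},
\]
which is super-polynomially small, proving the quite sure bound.

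There is essentially no hard part here: the analytic obstacle — uniformly controlling predecessor $X$-values across all stable partitions at once — has already been surmounted in producing \eqref{P_delta} via the chain of reductions $C_1 \supset C_2 \supset C_3 \supset C_4$ of Section \ref{Esttools}. The only care needed is to ensure the translation from $X$-values to ranks is valid, and this is immediate from the construction of the random instance $I_n$ via the array $\{X_{i,j}\}$.
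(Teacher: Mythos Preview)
Your proof is correct and essentially identical to the paper's own argument: both define $x_n = n^{-1/2}\log^{1+\delta}n$ and $R_i = |\{j\neq i: X_{i,j}\le x_n\}|$, invoke \eqref{P_delta} to control $\max_{\bold\Pi} X(\bold\Pi)$, apply the Chernoff bound $\pr(R_i\ge 2(n-1)x_n)\le \exp(-f(2)(n-1)x_n)$ with $f(c)=1+c(\log c-1)$, and finish by a union bound over $i\in[n]$. The only cosmetic difference is that the paper weakens the binomial tail to $e^{-n^{1/2}/3}$ while you retain the sharper $e^{-c\,n^{1/2}\log^{1+\delta}n}$; either suffices.
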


Combining Lemma \ref{R_max<} with \eqref{EOn<} in Theorem \ref{E[On]}, we have proved
\begin{Corollary}\label{ABM=>} With high probability, there exists a perfect matching which is blocked by at most $n^{3/4} (\log n)^{2+\delta}$ unmatched pairs.
\end{Corollary}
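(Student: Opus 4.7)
The plan is to combine three ingredients already in hand: the Abraham--Bir\'o--Manlove bound $B(I)\le d(I)\,\mathcal O(I)$, the quite sure upper bound on the largest predecessor rank from Lemma \ref{R_max<}, and the bound on $\ex[\mathcal O_n]$ from \eqref{EOn<}. The architecture is: shrink the random full-lists instance to an instance with short preference lists for which the ABM bound becomes quantitatively useful, then verify that the stable partitions survive the truncation so that $\mathcal O$ is unchanged, and finally convert the expectation bound on $\mathcal O_n$ to a probability bound by Markov.

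First I would fix a small $\delta'>0$ with $\delta'<\delta$ and set $d_n=\lceil n^{1/2}\log^{1+\delta'}n\rceil$. Let $I_n'$ be the truncated instance obtained from $I_n$ by keeping only each member's top $d_n$ choices. By Lemma \ref{R_max<}, quite surely every predecessor in every stable partition of $I_n$ has rank at most $d_n$, so each stable partition of $I_n$ remains stable in $I_n'$, and in particular the odd parties of $I_n'$ coincide with those of $I_n$; hence $\mathcal O(I_n')=\mathcal O(I_n)=\mathcal O_n$, all with quite sure probability. By construction $d(I_n')\le d_n$, so the ABM bound gives
\[
B(I_n)\le B(I_n')\le d(I_n')\,\mathcal O(I_n')\le d_n\,\mathcal O_n,
\]
on this quite sure event.

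Next, put $\delta''=\delta-\delta'>0$ and set $M_n=n^{1/4}\log^{1+\delta''}n$. By \eqref{EOn<} in Theorem \ref{E[On]},
\[
\ex[\mathcal O_n]\le (1+o(1))\,\frac{\Gamma(1/4)}{4\sqrt{\pi e}\,2^{1/4}}\,n^{1/4}\log n,
\]
so Markov's inequality yields $\pr(\mathcal O_n\ge M_n)=O(\log^{-\delta''}n)\to 0$. On the complementary event, together with the quite sure event above,
\[
B(I_n)\le d_n\,M_n = \lceil n^{1/2}\log^{1+\delta'}n\rceil\cdot n^{1/4}\log^{1+\delta''}n\le n^{3/4}(\log n)^{2+\delta},
\]
for $n$ large enough, which is the required bound.

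The only subtle step is the assertion that truncating the preference lists of $I_n$ to the top $d_n$ choices preserves both the odd-cycle structure and the ABM estimate. This follows because the stability conditions \eqref{Tan1,2} for $\bold\Pi$ involve only comparisons with $\bold\Pi^{-1}(i)$, whose rank is at most $R_{\max}\le d_n$ quite surely; thus every stable partition of $I_n$ is a stable partition of $I_n'$ with the same odd parties (cf. Tan's theorem on uniqueness of odd parties in \cite{Tan}), and any perfect matching $M$ on $[n]$ that is maximally stable for $I_n'$ is blocked in $I_n$ only by pairs that are blocking in $I_n'$, since ranks above $d_n$ correspond to members who do not appear on each other's truncated lists. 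Hence the main obstacle is really bookkeeping: verifying that the two ``quite surely'' events (from Lemma \ref{R_max<}) and the Markov event (from \eqref{EOn<}) can be intersected and that their joint probability is still $1-o(1)$; this is immediate since the first event has probability $1-e^{-\Theta(\log^{1+\delta'}n)}$ and the second has probability $1-O(\log^{-\delta''}n)$, so their intersection has probability $1-O(\log^{-\delta''}n)\to 1$, establishing the ``with high probability'' qualifier in the statement.
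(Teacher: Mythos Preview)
Your overall architecture---truncate to lists of length $d_n\approx n^{1/2}\log^{1+\delta'}n$, observe that stable partitions and hence odd parties survive (Lemma~\ref{R_max<}), invoke the ABM bound on the truncated instance, and control $\mathcal O_n$ via Markov and \eqref{EOn<}---is exactly the route the paper sketches.

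There is, however, a genuine gap. The inequality $B(I_n)\le B(I_n')$ is backward: for any fixed perfect matching $M$, every pair blocking $M$ in the truncated instance $I_n'$ also blocks $M$ in $I_n$ (both members are on each other's short lists and each prefers the other to the current partner), but not conversely---if $R_i(j)>d_n$ then $(i,j)$ may block in $I_n$ while $j$ is off $i$'s truncated list, so it does not block in $I_n'$. Hence $B(I_n')\le B(I_n)$, and the ABM bound $B(I_n')\le d_n\mathcal O_n$ does not by itself control $B(I_n)$. Your last-paragraph justification has this implication reversed.

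The repair is to work directly with the ABM matching $M$ built from a stable partition $\bold\Pi$ of $I_n$: remove one member from each odd cycle, match the rest along $\bold\Pi$, and pair the removed members among themselves. Every non-removed member $i$ then has $M(i)\in\{\bold\Pi(i),\bold\Pi^{-1}(i)\}$, whence $R_i(M(i))\le R_i(\bold\Pi^{-1}(i))\le R_{\max}$; a short case check using \eqref{Tan1,2} shows that any blocking pair for $M$ in $I_n$ must contain a removed member $v$, and the other member $j$ (if non-removed) must satisfy $R_j(v)<R_j(M(j))\le R_{\max}$. Therefore
\[
B(I_n)\le\binom{\mathcal O_n}{2}+\sum_{v\text{ removed}}\bigl|\{j:R_j(v)\le R_{\max}\}\bigr|.
\]
For a fixed threshold $d_n\ge R_{\max}$, the count $|\{j\ne v:R_j(v)\le d_n\}|$ is $\text{Bin}(n-1,d_n/(n-1))$ for each $v$, and the same Chernoff-plus-union argument used just before Lemma~\ref{R_max<} gives $\max_v|\{j:R_j(v)\le d_n\}|\le 2d_n$ quite surely. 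Intersecting with your Markov event on $\mathcal O_n$ then yields $B(I_n)\le 2d_n\mathcal O_n+\binom{\mathcal O_n}{2}\le n^{3/4}(\log n)^{2+\delta}$ with high probability. The paper's own one-line ``proof'' does not spell this out either, so the slip is understandable.
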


\subsection{Likely range of $\mathcal R(\bold\Pi)$ in a stable, fixed-point free, partition $\bold\Pi$} In Lemma \ref{P_k(P)=} we proved that $\pr_k(\bold\Pi)$ the probability that $\bold\Pi$ is stable and the total
rank of all $n$ predecessors $\mathcal R(\bold\Pi)$ equals $k$, necessarily exceeding $n+|\text{Odd}\,(\bold\Pi)$, is given by
\begin{equation}\label{P_k(P)=,again}
\begin{aligned}
&\pr_k(\bold\Pi)=\idotsint\limits_{\bold x\in [0,1]^{n}}\left[z^{\bar k}\right] F(\bold x,z)\,d\bold x,\\
F(\bold x,z)&:=\prod_{h\in \text{Odd}\,(\bold\Pi)}\!\!\!\!\!\!x_h\,\,\cdot\prod_{(i,j)\notin D(\bold\Pi)}
\!\!\bigl(\bar x_i\bar x_j+zx_i\bar x_j+z\bar x_ix_j\bigr),
\end{aligned}
\end{equation}
where $m:=|\text{Odd}\,(\bold\Pi)|$ and $\bar k:=k-(n+m)$. 
\begin{Theorem}\label{P(R(P)sim)} For $\eps\in (0,1)$, 
\[
\pr\left(\max_{\bold\Pi}\left|\frac{\mathcal R(\bold\Pi)}{n^{3/2}}-1\right|\ge \eps\right) \le e^{-
\Theta(\log^2 n)}.
\]
\end{Theorem}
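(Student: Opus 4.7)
The plan is to reduce the uniform-in-$\bold\Pi$ claim to a first-moment union bound on the number of stable partitions with an atypical rank sum, and then apply a Chernoff-type argument in the generating variable $z$ of Lemma \ref{P_k(P)=}. Writing $\mathcal N^{\pm}_{\eps}$ for the number of stable $\bold\Pi$ with $\pm(\mathcal R(\bold\Pi)-n^{3/2})\ge\eps n^{3/2}$, the target probability is bounded by $\ex[\mathcal N^+_{\eps}]+\ex[\mathcal N^-_{\eps}]$. For every $\bold\Pi$ with $m:=|\text{Odd}\,(\bold\Pi)|\le m_n$ (the complementary contribution is absorbed by Corollary \ref{fruit}) and every $z=e^{\tau}>0$, Markov gives
\[
\pr\bigl(\bold\Pi\text{ stable},\,\mathcal R(\bold\Pi)\ge(1+\eps)n^{3/2}\bigr)\le z^{-(1+\eps)n^{3/2}}\cdot z^{n+m}\!\!\!\int\limits_{[0,1]^n}\!\!\! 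F(\bold x,z)\,d\bold x,
\]
with a symmetric inequality for the lower tail using $z<1$ (equivalently $\tau<0$).

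To estimate the generating integral for $z$ near $1$, I would rewrite each factor in $F(\bold x,z)$ as
\[
\bar x_i\bar x_j+zx_i\bar x_j+z\bar x_ix_j=(1-x_ix_j)(1+w\rho_{ij}),\quad w:=z-1,
\]
where $\rho_{ij}:=(x_i+x_j-2x_ix_j)/(1-x_ix_j)\in[0,1]$. The nested reductions $C_1\supset C_2\supset C_3\supset C_4$ of Lemma \ref{summary} go through: the deformation factor $\prod(1+w\rho_{ij})$ is bounded by $\exp(|w|(n-1)s(1+o(1)))$ on $C_1$, and for $|\tau|\le A(\log^2 n)/(\eps n^{3/2})$ this penalty is $e^{O(\log^2 n)}$, which is swamped by the $e^{-\Theta(\log^2 n)}$ reductive gains (after comparison of implicit constants). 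On $C_4$, $\rho_{ij}=O(n^{-1/2}\log^2 n)$, so $\log(1+w\rho_{ij})=w\rho_{ij}+O(w^2 n^{-1}\log^4 n)$, and a pair count analogous to \eqref{exactexp} yields $\sum_{(i,j)\notin D(\bold\Pi)}\rho_{ij}=(n-1)s(1+o(1))$. Hence $F(\bold x,z)=F(\bold x,1)\cdot\exp(w(n-1)s(1+o(1)))$ on $C_4$, and the saddle-point evaluation of Lemma \ref{P_4(P)=exact} applied to the shifted exponent $-\psi_n(s)+w(n-1)s$ (whose maximum $\hat s_w$ differs from $\hat s_0=\sqrt{n+m-1}$ by $O(|w|n)=O(n^{-1/2}\log^2 n)$) delivers
\[
z^{n+m}\!\int F(\bold x,z)\,d\bold x=\bigl(1+o(1)\bigr)\,\pr(\bold\Pi)\,\exp\bigl(\tau n^{3/2}(1+o(1))\bigr).
\]

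Inserting this back into the Chernoff inequality and choosing $\tau=\pm A(\log^2 n)/(\eps n^{3/2})$ with $A$ sufficiently large, I obtain, uniformly over eligible $\bold\Pi$,
\[
\pr\bigl(\bold\Pi\text{ stable},\,|\mathcal R(\bold\Pi)/n^{3/2}-1|\ge\eps\bigr)\le 2\,\pr(\bold\Pi)\,\exp\bigl(-A(1+o(1))\log^2 n\bigr).
\]
Summing over all $\bold\Pi$ with $m\le m_n$ and invoking $\ex[\mathcal S_n]=O(n^{1/4})$ from Theorem \ref{E[On]} produces a total bound of $O(n^{1/4}e^{-A\log^2 n})=e^{-\Theta(\log^2 n)}$ for $A$ large enough. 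The main technical obstacle is verifying that the tail-chops $C_j\setminus C_{j+1}$ retain their super-polynomial smallness after multiplication by the deformation factor $e^{w(n-1)s}=e^{O(\log^2 n)}$: one must check that the $w$-induced shift of the saddle-point is small enough to leave the Gaussian concentration window $O(1)$ around $\hat s_0$ intact, so that the original chopping estimates survive (perhaps after widening $s_n$ from $n^{1/2}+3\log n$ to $n^{1/2}+C\log^2 n$ for a suitable constant $C$). This is a uniformity check, and once it is in place the rest of the argument is the same saddle-point machinery already employed for Lemma \ref{P_4(P)=exact}.
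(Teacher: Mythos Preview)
Your plan is correct and follows the same high-level architecture as the paper (union bound over $\bold\Pi$, Chernoff in the generating variable $z$, reduction to $C_4$, saddle-point on $C_4$), but the paper handles the Chernoff parameter differently, and that difference eliminates exactly the obstacle you flag.

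You fix $z$ \emph{outside} the integral and therefore must carry the deformation factor $\prod(1+w\rho_{ij})\le e^{w(n-1)s}$ through every chopping step. This is doable---with $|w|=O(\log^2 n/(\eps n^{3/2}))$ the factor is $e^{O(\log^2 n)}$ on the region $s=O(n^{1/2})$, and for larger $s$ it is absorbed into $e^{-s^2/2}$ via a saddle shift of order $wn=o(1)$---but it forces the constant-balancing you allude to. The paper instead pushes the $\inf_{z\ge 1}$ \emph{inside} the integral:
\[
\pr(\mathcal R(\bold\Pi)\ge k)\le\int_{[0,1]^n}\inf_{z\ge 1}\bigl[z^{-\bar k}F(\bold x,z)\bigr]\,d\bold x.
\]
Since $z=1$ is admissible in the infimum, the integrand is pointwise $\le F(\bold x,1)=F(\bold x)$, so the entire reduction from $[0,1]^n$ to $C_4$ is \emph{literally} that of Lemma~\ref{summary}, with no deformation factor at all. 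Only on $C_4$ does the paper then pick $z$, and there it optimizes $z=z(s)$ as a function of $s$ (rather than globally), obtaining the much stronger $I_4(\bold\Pi,k)\le e^{-\Theta(\eps^2 n)}/(n+m-1)!!$. The final answer is the same since the $C_4^c$ contribution $e^{-\Theta(\log^2 n)}/(n+m-1)!!$ is the bottleneck either way.

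In short: your route works, but the paper's device of taking the infimum inside the integral makes the reduction step free and decouples it cleanly from the optimization step, removing the uniformity check you correctly identified as the main nuisance.
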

\begin{proof} Predictably, we will prove the claim via the union bound, i.e. summing the bounds 
of the respective probabilities for the individual partitions. It suffices then to consider the partitions
$\bold\Pi$ with $m\le m_n=\lceil n^{1/2}\log n\rceil$.

First of all, since $F(\bold x,z)$ in \eqref{P_k(P)=,again} is a polynomial of $z$ with non-negative coefficients, we have a Chernoff-type bound: for $k:=\lceil (1+\eps)n^{3/2}\rceil$,
\[
\pr(\mathcal R(\bold\Pi)\ge k)\le I(\bold\Pi,k):=\idotsint\limits_{\bold x\in [0,1]^{n}}\inf_{z\ge 1}\Bigl[z^{-\bar k} F(\bold x,z)\Bigr]\,d\bold x.
\]
The integrand is, at most,
\[
F(\bold x,1)=F(\bold x)\le_b e^{-\frac{s^2}{2}}\!\!\!\!\! \prod_{h\in \text{Odd}\,(\bold\Pi)}\!\!\!\!\!\!x_h,
\]
($s=\sum_{i\in [n]} x_i$), see Lemma \ref{simple1,2}. Therefore the proof of Lemma \ref{summary} delivers, with only notational modification, that
\begin{equation}\label{I-I4}
I(\bold\Pi,k)-I_4(\bold\Pi,k)\le \frac{e^{-\Theta(\log^2 n)}}{(n+m-1)!!}.
\end{equation}
Here $I_4(\bold\Pi,k)$ is the integral of $\inf_{z\ge 1}\Bigl[z^{-\bar k} F(\bold x,z)\Bigr]$ over
$C_4\subset [0,1]^n$, defined by the additional constraints: with 
$\xi=\sum_{i\in [n_1]}x_i$, ($n_1:=n-m)$,
\begin{align}
&\qquad\qquad s\le s_n:=n^{1/2}+3\log n,\quad \max_{i\in [n]} x_i\le 1.02\, \frac{s\log^2 n}{n};\label{I}\\
&\left|\frac{n_1\sum_{i\in [n_1]}x_i^2}{2\xi^2}-1\right|\le n^{-\sigma}, \,
\,\left|\frac{2n_1\sum_{i\in [n_1/2]}x_i x_{i+n_1/2}}{\xi^2}-1\right|\le n^{-\sigma}.\label{II}
\end{align}
Instead of looking for the best $z=z(\bold x)\ge 1$ where $z^{-\bar k} F(\bold x,z)$ attains, or is close
to, its infimum, we confine ourselves to a sub-optimal $z=z(s)\ge 1$ (i.e. dependent on $s$ only), which makes $z^{-\bar k} F(\bold x,z)$ suitably small for all $\bold x\in C_4$. Consider $z\le \frac{2\bar k}{sn}$;
as we shall see shortly, the minimum point of an auxiliary bound for the integrand does satisfy this constraint.

Using $1+x\le e^x$,  the constraints \eqref{I}, \eqref{II} and $z\le \frac{2\bar k}{sn}$, we have
\begin{align*}
&\prod_{(i,j)\notin D(\bold\Pi)}\!\!\!\!\bigl(\bar x_i\bar x_j+zx_i\bar x_j+z\bar x_ix_j\bigr)=\!\!\!\!
\prod_{(i,j)\notin D(\bold\Pi)}\!\!\!\!\bigl(1+(1-2z)x_ix_j+(z-1)(x_i+x_j)\bigr)\\
&\qquad\qquad\le\exp\Bigl(\sum_{(i,j)\notin D(\bold\Pi)}\bigl[(1-2z)x_ix_j +(z-1)(x_i+x_j)\bigr]\Bigr)\\
&\qquad\qquad\qquad\qquad\le_b\exp\Bigl((1-2z)\frac{s^2}{2}+n(z-1) s\Bigr);
\end{align*}
therefore 
\[
z^{-\bar k} F(\bold x,z)\le_b \exp\Bigl((1-2z)\frac{s^2}{2}+n(z-1) s-\bar k\log z\Bigr)\! \prod_{h\in \text{Odd}\,(\bold\Pi)}\!\!\!\!\!\!x_h.
\]
So, applying the identity \eqref{int,prod},
\begin{align}
&\qquad\quad I_4(\bold\Pi,k)\le_b \frac{1}{(n+m-1)!}\int_0^{s_n}\exp(H(z,s))\,ds,\label{I4<}\\
&H(z,s):=(1-2z)\frac{s^2}{2}+n(z-1) s-\bar k\log z+(n+m-1)\log s.\notag
\end{align}
Let us use \eqref{I4<} to prove that 
\begin{equation}\label{I4<expl}
I_4(\bold\Pi,k) \le\frac{e^{-\Theta(\eps^2 n)}}{(n+m-1)!!}.
\end{equation}

As a function of $z$, $H(z,s)$ is convex, and has its absolute minimum at
\[
\bar z=\bar z(s):=\frac{\bar k}{(n-s)s}\sim \frac{\bar k}{ns}< \frac{2\bar k}{ns}.
\]
This {\it decreasing\/} function of $s$ is ``Chernoff-admissible'' when $s$ is such that $z(s)\ge 1$. Let $s_1$ be the smaller root of the (quadratic) equation $\bar z(s)=1$: 
\[
s_1>\frac{\bar k}{n}, \quad s_1=\frac{\bar k}{n}+O(1)=(1+\eps)n^{1/2}+O(1).
\]
Thus our best hope is a function
\[
z(s)=\left\{\begin{aligned}
&\bar z(s),&&\text{if }s\le s_1,\\
&1,&&\text{if }s>s_1.\end{aligned}\right.
\]
{\bf (i)\/} $s>s_1$. Here $s_1\sim\frac{\bar k}{n} > (n+m-1)^{1/2}$, the maximum point of 
\[
h(s):=H(1,s)=-\frac{s^2}{2} +(n+m-1)\log s.
\]
So, arguing as in the proof of Lemma \ref{C1},
\begin{equation}\label{arg}
\begin{aligned}
&\frac{1}{(n+m-1)!}\int_{s_1}^{s_n}\exp(H(1,s))\,ds \le_b \frac{e^{h(s_1)}}{(-h'(s_1))(n+m-1)!}\\
&\le\frac{e^{-\Theta(\eps^2 n)}(n+m-2)!!}{(n+m-1)!}=\frac{e^{-\Theta(\eps^2 n)}}{(n+m-1)!!}.
\end{aligned}
\end{equation}
\noindent {\bf (ii)\/} $s<s_1$. Let $\bar h(s):=H(z(s),s)$. Since $H_z(z(s),s)=0$, we have
\begin{align*}
\bar h'(s)&=\left.H_s(z,s)\right|_{z=z(s)}=\Bigl(s-\frac{\bar k}{n-s}\Bigr)+\Bigl(\frac{\bar k}{s}-n\Bigr)+\frac{n+m-1}{s},\\
\bar h''(s)&=1-\frac{\bar k}{(n-s)^2}-\frac{k-1}{s^2}.
\end{align*}
By the second formula, we have $\bar h''(s)<0$ for $s\le s_n$, i.e. $\bar h(s)$ is concave. By the first
formula, we have 
\begin{align*}
\bar h'\Bigl(\tfrac{\bar k}{n}\Bigr)&\ge -(1+o(1))\frac{\bar k^2}{n^3}+\frac{n^2}{\bar k}\sim \frac{n^{1/2}}{1+\eps}\to\infty,\\
\bar h'(s_1)&=\frac{(n+m-1)-s_1^2}{s_1}\sim -\frac{(2\eps+\eps^2)n^{1/2}}{1+\eps}\to-\infty.
\end{align*}
Thus $\max\,\{\bar h(s): s\le s_n\}$ is attained at a unique point $s_2\in [\bar k/n, s_1]$; in particular,
$s_1-s_2=O(1)$. Since $|\bar h''(s)|=O(n^{1/2})$, it follows\,--\,via Taylor's approximation of $\bar h(s_1)
(=h(s_1))$ at $s_2$\,--\,that $\bar h(s_2)=h(s_1)+O(n^{1/2})$. Therefore, similarly to \eqref{arg}, we
obtain
\[
\frac{1}{(n+m-1)!}\int_{0}^{s_1}\exp(H(z(s),s))\,ds \le \frac{e^{-\Theta(\eps^2 n)}}{(n+m-1)!!}.
\]
This bound together with \eqref{arg} imply \eqref{I4<expl}, which in combination with \eqref{I-I4}
deliver
\[
\pr(\mathcal R(\bold\Pi)\ge k)\le \frac{e^{-\Theta(\log^2 n})}{(n+m-1)!!}.
\]
As in the proof of Corollary \ref{fruit}, it follows that
\[
\pr(\exists\,\bold\Pi: \mathcal R(\bold\Pi)\ge (1+\eps)n^{3/2})\le e^{-\Theta(\log^2 n)}.
\]
Similarly
\[
\pr(\exists\,\bold\Pi: \mathcal R(\bold\Pi)\le (1-\eps)n^{3/2})\le e^{-\Theta(\log^2 n)}.
\]
\end{proof}

\section{$\ex\bigl[\mathcal S_n^2\bigr]$ and the expected number of members with multiple stable
predecessors}
First of all
\[
\ex\bigl[(\mathcal S_n)_2\bigr]=\sum_{\bold\Pi_1\neq \bold\Pi_2} \!\!\!\pr(\bold\Pi_1,\bold\Pi_2),
\]
where $\pr(\bold\Pi_1,\bold\Pi_2)$ is the probability that $\bold\Pi_1$ and $\bold\Pi_2$ are both
stable.
By Lemma \ref{p(P1,P2stab)=},
\begin{equation}\label{P_1,2=intF}
\begin{aligned}
&\quad \pr(\bold\Pi_1,\bold\Pi_2)=\idotsint\limits_{\bold x,\bold y\in [0,1]^n}F(\bold x,\bold y)\,d\bold x d\bold y,\\
F(\bold x,\bold y)&:=\prod_{h} x_h\,\cdot\prod_{(i\neq j)}\!
[1-x_ix_j-y_iy_j+(x_i\wedge y_i)(x_j\wedge y_j)],\\
&\quad d\bold x=\prod_{i\in [n]} dx_i,\quad d\bold y=\prod_{i\in [n]:\bold\Pi_1(i)\neq \bold\Pi_2(i)}
\!\!\!\!\!\!\!\!\!\!\!dy_i,
\end{aligned}
\end{equation}
where $h\in \text{Odd}\,(\bold\Pi_{1,2})$, $(i\neq j)\in D_1^c\cap D^c_2$, $(D_t=D(\bold\Pi_t))$,
$y_i=x_i$ if $\bold\Pi_1(i)=\bold\Pi_2(i)$, and for every circuit $\{i_1,\dots,i_{\ell}\}$, ($\ell\ge 4$), formed by alternating pairs matched either in 
$\bold\Pi_1$ or $\bold\Pi_2$, we have : 
\begin{equation}\label{alter}
\begin{aligned}
&\text{either } \,\,x_{i_1}>y_{i_1},\,x_{i_2}<y_{i_2},\dots, x_{i_{\ell}}<y_{i_{\ell}},\\
&\text{or}\qquad\,\, x_{i_1}<y_{i_1},\,x_{i_2}>y_{i_2},\dots, x_{i_{\ell}}>y_{i_{\ell}}.
\end{aligned}
\end{equation}
Let $\mu\!=\!\mu(\bold\Pi_1,\bold\Pi_2)$ be the total number of these circuits, and $2\nu\!=\!2\nu(\bold\Pi_1,$$\bold\Pi_2)$, be their total length. Obviously, there are $2^{\mu}$ ways to select one of
two ``alternation'' sequences described in \eqref{alter} for each of the $\mu$ circuits. Whatever the choice, there are exactly $\nu$ vertices $i$, on those circuits, where $y_i>x_i$ and $\nu$ vertices where $y_i<x_i$. Let
$A$ and $B$ denote the correspondent subsets, $|A|=|B|=\nu$. So
\begin{equation}\label{cone}
\begin{aligned}
&y_i>x_i,\,\, \text{ if }i\in A;\quad y_i<x_i,\,\,\text{ if }i\in B,\\
&y_i=x_i,\,\, \text{ if } i\in [n]\setminus (A\cup B)=\text{Odd}_{1,2}\cup\bigl(\text{Even}_{1,2}\setminus(A\cup B)\bigr),
\end{aligned}
\end{equation}
$\text{Even}_{1,2}:=[n]\setminus \text{Odd}_{1,2}$. 
The individual contributions of these $2^{\mu}$ choices of the inequalities along the circuits to
the integral in \eqref{P_1,2=intF} are all the same. This means that $\pr(\bold\Pi_1,\bold\Pi_2)$ 
equals the RHS integral in \eqref{P_1,2=intF}, with inequalities \eqref{cone} instead of \eqref{alter}, {\it times\/} $2^{\mu}$.

As in the previous section,  we need first to identify the subrange of $(\bold x,\bold y)$ that
provides an asymptotically dominant contribution to the integral, and second to find a sharp approximation for that contribution. 
Like Theorem \ref{E[On]}, the key instrument is the bound for the double-indexed product in the definition of $F(\bold x,\bold y)$ proved in Lemma \ref{simple1,2}:
\begin{equation}\label{F(x,y)<}
\begin{aligned}
&\quad F(\bold x,\bold y)\le_b 
\exp\left(-\frac{s_1^2}{2}-\frac{s_2^2}{2}+\frac{s_{1,2}^2}{2}\right)\,\prod_{h} x_h ,\\
&s_1:=\sum_{i\in [n]}x_i,\,\,s_2:=\sum_{i\in [n]} y_i,\,\,s_{1,2}:=\sum_{i\in [n]}(x_i\wedge y_i).
\end{aligned}
\end{equation}
Here $(\bold x,\bold y)$ are subject to the constraints \eqref{cone}. To make use of this bound, we change the variables of integration:
\begin{equation}\label{change}
x'_i=\left\{\begin{aligned}
&x_i-y_i,&&i\in B,\\
&x_i,&&i\not\in B,\end{aligned}\right.
\quad
y_i'=\left\{\begin{aligned}
&y_i-x_i,&&i\in A,\\
&y_i,&&i\notin A.\end{aligned}\right.
\end{equation}
Here $\bold x',\bold y'\in [0,1]^n$, such that $x_i'=y_i'$ if $\bold\Pi_1(i)=\bold\Pi_2(i)$, and the Jacobian $\partial (\bold x,\bold y)/\partial(\bold x',\bold y')$ equals $1$. Furthermore, switching to $(\bold x',\bold y')$ and introducing
\begin{equation}\label{xij=}
\xi_1=\sum_{i\in [n]\setminus B}x_i'+\sum_{i\in B}y_i',\quad \xi_2=\sum_{i\in B}x_i',\quad \xi_3=\sum_{i\in A}y_i',
\end{equation}
we obtain
\begin{equation}\label{sumsqrs}
\begin{aligned}
&\,\,\,\,\,\,-\frac{s_1^2}{2}-\frac{s_2^2}{2}+\frac{s_{1,2}^2}{2}=-\frac{1}{2}\Biggl(\sum_{i\in [n]\setminus B}x_i'+\sum_{i\in B}y_i'+\sum_{i\in B}x_i'\Biggr)^2\\
&\,\,\,\,\,\,-\frac{1}{2}\Biggl(\sum_{i\in [n]\setminus B}x_i'+\sum_{i\in B}y_i'+\sum_{i\in A}y_i'\Biggr)^2
+\frac{1}{2}\Biggl(\sum_{i\in [n]\setminus B}x_i'+\sum_{i\in B}y_i'\Biggr)^2\\
&=-\frac{1}{2}(\xi_1+\xi_2)^2-\frac{1}{2}(\xi_1+\xi_3)^2+\frac{1}{2}\xi_1^2
=-\frac{1}{2}(\xi_1+\xi_2+\xi_3)^2+\xi_2\xi_3.
\end{aligned}
\end{equation}
Notice that
\begin{equation}\label{sumxi=sumlor}
\begin{aligned}
&\xi_1+\xi_2+\xi_3=\sum_{i\in [n]}x_i'+\sum_{i\in A\cup B}y_i'=\sum_{i\in [n]}(x_i\lor y_i),\\
&\qquad\xi_2+\xi_3=\sum_{i\in B}x_i'+\sum_{i\in A}y_i'=\sum_{i\in [n]}|x_i-y_i|.
\end{aligned}
\end{equation}
In full analogy with the case of $\ex\bigl[\mathcal O_n\bigr]$,  the bound \eqref{F(x,y)<} and the identity \eqref{sumsqrs} will allow us to shrink, in several steps,
 the range of $(\bold x,y)$ to a  core range, on which the integrand $F(\bold x,\bold y)$ can be sharply
 approximated.\\

{\bf (1)\/} Recall that we consider the partitions $\bold\Pi$ with the total length of all odd cycles $m=m(\bold\Pi)\le m_n=[n^{1/2}\log n]$. Our first step is to dispense with the pairs $(\bold\Pi_1,\bold\Pi_2)$ of the partitions such that $2\nu=2\nu(\bold\Pi_1,\bold\Pi_2)\ge 2m_n$. 
\begin{Lemma}\label{nu>nu_n} 
\begin{align*}
&\ex\bigl[(\mathcal S_n)_2\bigr]-\ex_1\bigl[(\mathcal S_n)_2\bigr]\le e^{-\Theta(\log^2 n)} ,\\
&\ex_1\bigl[(\mathcal S_n)_2\bigr]:=\sum_{\nu(\bold\Pi_1,\bold\Pi_2)\le m_n}
\!\!\!\!\!\!\!\!\!\pr(\bold\Pi_1,\bold\Pi_2).
\end{align*}
\end{Lemma}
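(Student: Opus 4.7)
My plan is to adapt the tail-estimate template behind Corollary \ref{fruit} to the two-partition integrand \eqref{P_1,2=intF}. Starting from the bound \eqref{F(x,y)<} and passing to the transformed variables \eqref{change}, the identity \eqref{sumsqrs} rearranges algebraically to
\[
-\tfrac{(\xi_1+\xi_2+\xi_3)^2}{2}+\xi_2\xi_3 = -\tfrac{\xi_1^2}{2}-\xi_1\xi_2-\tfrac{\xi_2^2}{2}-\xi_1\xi_3-\tfrac{\xi_3^2}{2}.
\]
This cleanly decouples the integrand across the three groups of new coordinates feeding $\xi_1$, $\xi_2$, and $\xi_3$; dropping the cube-boundary constraints $x_i'+y_i'\le 1$ (to get an upper bound), the integrations over the two ``excess'' blocks of $\nu$ coordinates each factor out.

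Next I would integrate. Lemma \ref{intervals1} together with \eqref{int,prod} reduces each excess block to a one-dimensional Gaussian-moment integral
\[
K(\xi_1):=\frac{1}{(\nu-1)!}\int_0^{\infty}e^{-\xi_1 s-s^2/2}s^{\nu-1}\,ds \le \xi_1^{-\nu},
\]
while the odd-cycle block supplies a factor $\xi_1^{2m-1}/(2m-1)!$ and the common-pair block supplies a Dirichlet factor $\xi_1^{n-m-2\nu-1}/(n-m-2\nu-1)!$. The residual Gaussian $e^{-\xi_1^2/2}$ in the outer integral localizes $\xi_1\sim\sqrt{n+m-2\nu}$; combined with $K(\xi_1)^2\le \xi_1^{-2\nu}$, a saddle-point analysis (mirroring the proof of Lemma \ref{P_4(P)=exact}) delivers
\[
\pr(\bold\Pi_1,\bold\Pi_2)\le_b\frac{2^\mu\cdot c^{\nu}}{n^\nu\cdot(n+m-1)!!}
\]
for some absolute $c>0$.

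The counting step is standard. The number of pairs $(\bold\Pi_1,\bold\Pi_2)$ with given $m$ (odd-cycle size), $\mu$ (circuit count), and $\nu$ (half-total circuit length) is bounded by
\[
\binom{n}{m}f(m)\cdot\binom{n-m}{2\nu}(2\nu-1)!!\cdot\frac{2^{\mu-1}}{\mu!}\cdot(n-m-2\nu-1)!!,
\]
where the $(2\nu-1)!!/\mu!$ factor encodes the partition of the $2\nu$ circuit vertices into $\mu$ alternating circuits of even length. Combining with the probability bound and applying Stirling as in Corollary \ref{fruit}, each $(m,\nu)$-slice contributes at most $\exp(-c_1m^2/n-c_2\nu^2/n)$ times polynomial factors. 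The sum over $\mu$ absorbs the $2^\mu/\mu!$, and summing over $\nu\ge m_n=\lceil n^{1/2}\log n\rceil$ uniformly in $m\le m_n$ delivers $\exp(-\Theta(m_n^2/n))=\exp(-\Theta(\log^2 n))$, as required.

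The hardest part will be that $K(\xi_1)\le \xi_1^{-\nu}$ is tight only in the regime $\xi_1\gtrsim\sqrt{\nu}$; for atypically small $\xi_1$ it must be replaced by the Gaussian-regime estimate $K\sim(c/\sqrt{\nu})^\nu$. Fortunately the polynomial weight $\xi_1^{n+m-2\nu-1}$ from the Dirichlet integration keeps $\xi_1$ localized near $\sqrt{n+m-2\nu}\gg\sqrt{\nu}$ when $\nu\le m_n$, so the crude regime suffices, but verifying this when $m$ and $\nu$ both approach $m_n$ is the most delicate part of the calculation. A secondary bookkeeping challenge is keeping the $2^\mu$ factor from the alternation choice in \eqref{alter} consistent with the $2^{\mu-1}$ arising in the pair count, so as not to propagate spurious exponential growth in $\mu$.
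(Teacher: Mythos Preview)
Your decoupling idea---rewriting the exponent as $-\xi_1^2/2-\xi_1\xi_2-\xi_2^2/2-\xi_1\xi_3-\xi_3^2/2$ and integrating out $\xi_2,\xi_3$ first---is a legitimate alternative to the paper's series expansion of $e^{\xi_2\xi_3}$. But the bound $K(\xi_1)\le\xi_1^{-\nu}$ is fatally crude: it discards the factor $e^{-s^2/2}$ in the integrand of $K$. A Laplace estimate shows that for $\xi_1\gg\sqrt{\nu}$ the $s$-integral localizes near $s_*\approx\nu/\xi_1$, giving
\[
K(\xi_1)\;\sim\;\xi_1^{-\nu}\,\exp\!\Bigl(-\tfrac{\nu^2}{2\xi_1^2}\Bigr).
\]
At the outer saddle $\xi_1\approx\sqrt{n+m}$ this extra Gaussian factor is $e^{-\nu^2/(2n)}$, which for $\nu\ge m_n=\lceil n^{1/2}\log n\rceil$ is at most $e^{-\log^2 n/2}$. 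That is \emph{precisely} the super-polynomial decay the lemma asserts; by replacing $K$ with $\xi_1^{-\nu}$ you throw it away. Concretely, at $m=0$ your probability bound becomes $2^{\mu}(n-2\nu-2)!!/(n-1)!$, and after multiplying by the (correct) pair count and summing over $\mu$ the $\nu$-term collapses to $n/(n-2\nu)=\Theta(1)$; the tail sum over $\nu\ge m_n$ is then $\Theta(n\log n)$, not $e^{-\Theta(\log^2 n)}$. The paper avoids this by keeping $e^{\xi_2\xi_3}$ intact, expanding it, and invoking the estimate \eqref{sum_k} for $\sum_k s(n+m,\nu,k)$, which retains the needed $(n+m)^{-\nu}$ decay uniformly in $\nu$.

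Two secondary issues: your circuit count $(2\nu-1)!!\cdot 2^{\mu-1}/\mu!$ is off by a large factor (the correct count is $2^{\mu}f(2\nu,\mu)$, with $\sum_\mu 2^{2\mu}f(2\nu,\mu)\sim e^{-1}(2\nu)!$, cf.\ \eqref{countpairs}--\eqref{sum_{mu}}), and your caveat paragraph speaks of ``$\nu\le m_n$'' when the whole point of the lemma is $\nu> m_n$. Both are fixable, but neither rescues the main estimate.
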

\begin{proof}
By the equations \eqref{F(x,y)<} and \eqref{sumsqrs}, {\it and\/} the identity \eqref{int,prod}, we have
\begin{equation}\label{sumxi_j}
\begin{aligned}
 &\pr(\bold\Pi_1,\bold\Pi_2)\le_b\,2^{\mu}\idotsint\limits_{\bold x',\bold y'\ge \bold 0}
\exp\Biggl(\!-\frac{1}{2}\Bigl(\sum_j\xi_j\Bigr)^2+\xi_2\xi_3\!\Biggr)\\
&\qquad\qquad\qquad\qquad\qquad\qquad\quad\times \Biggl(\,\prod_{h\in \text{Odd}_{1,2}}\!\!\!\! x_h'\Biggr)\,\,\,\prod_{i\in [n]} dx'_i\prod_{j\in A\cup B} dy'_j\\
&=2^{\mu}\iiint\limits_{\xi_j\ge \bold 0}\!
\exp\Biggl(\!-\frac{1}{2}\Bigl(\sum_j\xi_j\Bigr)^2+\xi_2\xi_3\!\Biggr)
\frac{\xi_1^{n+m-1}}{(n+m-1)!}\cdot\frac{(\xi_2\xi_3)^{\nu-1}}
{[(\nu-1)!]^2}\,d\boldsymbol{\xi}.
\end{aligned}
\end{equation}
Expanding $\exp(\xi_2\xi_3)=\sum_{k\ge 0}\xi_2^k\xi_3^k/k!$ and using again, term-wise, \eqref{int,prod}, we obtain
\begin{equation}\label{explsum}
\begin{aligned}
\pr(\bold\Pi_1,\bold\Pi_2)&\le_b2^{\mu}\sum_{k\ge 0}s(n+m,\nu,k),\\
s(n+m,\nu,k)&:=\frac{[(\nu-1+k)!]^2}{[(\nu-1)!]^2k!(n+m+2(\nu+k)-1)!!}.
\end{aligned}
\end{equation}
For $m=0$ this sum was estimated in \cite{Pit2}. For our case the estimate from \cite{Pit2} becomes
\begin{equation}\label{sum_k}
\sum_{k\ge 0}s(n+m,\nu,k)\le_b n\left(\frac{e}{n+m}\right)^{\frac{n+m}{2}}\!\!
(n+m)^{-\nu}.
\end{equation}
Furthermore, the number of ordered pairs $(\bold\Pi_1,\bold\Pi_2)$ with parameters $m$, $\nu$ and
$\mu$ is
\begin{equation}\label{countpairs}
\binom{n}{m} f(m) \binom{n-m}{2\nu}(n-m-2\nu-1)!!\cdot 2^{\mu}f(2\nu,\mu);
\end{equation}
here, as we recall, $f(m)$ is the total number of permutations of $[m]$ with only odd cycles of length
$3$ or more, and $f(2\nu,\mu)$ is the total number of circuit partitions of $[2\nu]$ with $\mu$  circuits, 
each of even length $4$ at least. The factor $2^{\mu}$ counts the total number of ways to assign,
in the alternating fashion, the edges of the circuits to the matching sets of $\bold\Pi_1$ and $\bold\Pi_2$.
Clearly then, $2^{\mu}f(2\nu,\mu)$ is the total number of permutations of $[2\nu]$ with only even
cycles, of length $4$ at least. We add that $(n-m-2\nu-1)!!$ is the total number of ways to form the
$(n-m-2\nu)/2$ matched pairs out of $n-m-2\nu$ elements outside the circuits, i.e. the pairs {\it common\/} to $\bold\Pi_1$ and $\bold\Pi_2$.

So, by \eqref{explsum}, \eqref{sum_k} and \eqref{countpairs}, we obtain
\begin{equation}\label{clumsy}
\begin{aligned}
&\sum_{\nu(\bold\Pi_1,\bold\Pi_2)\ge m_n}\!\!\!\!\!\!\!\!P(\bold\Pi_1,\bold\Pi_2)\le_b n\!\!\!\!\!\sum_{m\le
m_n}\!\binom{n}{m} f(m)
\left(\!\frac{e}{n+m}\!\right)^{\frac{n+m}{2}}\\
&\qquad\times\sum_{\nu\ge m_n}\!\!\binom{n-m}{2\nu}(n-m-2\nu-1)!! (n+m)^{-\nu}\sum_{\mu}2^{2\mu}f(2\nu,\mu).
\end{aligned}
\end{equation}
Now $f(m)\le m!$, and from \cite{Pit2} (Appendix) it follows that
\begin{equation}\label{sum_{mu}}
\sum_{\mu}2^{2\mu}f(2\nu,\mu)=e^{-1+O(\nu^{-1})} (2\nu)!=O((2\nu)!).
\end{equation}
Also
\[
\frac{(n-m-2\nu-1)!!}{(n-m-2\nu)!}=\left[2^{\frac{n-m-2\nu}{2}}\left(\frac{n-m-2\nu}{2}
\right)!\right]^{-1}.
\]
So the bound \eqref{clumsy} yields 
\begin{align*}
&\sum_{\nu(\bold\Pi_1,\bold\Pi_2)\ge m_n}\!\!\!\!\!\!\!\!P(\bold\Pi_1,\bold\Pi_2)\le_b
n!\,n\,\cdot\!\!\!\sum_{m\le m_n}\!\!\left(\!\frac{e}{n+m}\!\right)^{\frac{n+m}{2}}\\
&\qquad\qquad\qquad\times\sum_{\nu\ge m_n}\!
\left[(n+m)^{\nu}\, 2^{\frac{n-m-2\nu}{2}}\left(\frac{n-m-2\nu}{2}
\right) ! \right]^{-1}\\
&\le n!\,n^2\,\cdot\!\!\!\!\!\sum_{m\le m_n}\!\!\left(\!\frac{e}{n+m}\!\right)^{\frac{n+m}{2}}
\!\!\left[(n+m)^{m_n}\, 2^{\frac{n-m-2m_n}{2}}\left(\frac{n-m-2m_n}{2}
\right) !\,\right]^{-1}\!\!\!,
\end{align*}
since in the $\nu$-sum the terms decrease with $\nu$.
Applying Stirling formula for the two factorials and using $m\le m_n\ll n^{2/3}$ in the
expansions of $\log (1+z)$, $(z=m/n,\,z=-\tfrac{m+2m_n}{n})$, we transform this bound into
\begin{equation}\label{sum_{nu>nu_2}}
\sum_{\nu(\bold\Pi_1,\bold\Pi_2)\ge m_n}\!\!\!\!\!\!\!\!P(\bold\Pi_1,\bold\Pi_2)
\le n^2\cdot\!\!\! \sum_{m\le m_n}\!\!\!\exp\left(\!-\frac{(m+2m_n)^2}{4n}\right)\le e^{-0.99\log^2 n}.
\end{equation}
\end{proof}
From now on we will consider only {\it admissible\/} pairs $(\bold\Pi_1,\bold\Pi_2)$, i.e. those
satisfying $m(\bold\Pi_1,\bold\Pi_2)\le m_n$ and $\nu(\bold\Pi_1,\bold\Pi_2)\le m_n$.\\

{\bf (2)\/}  For the admissible pairs $(\bold\Pi_1,\bold\Pi_2)$, we can discard large parts of the $(\bold x,\bold y)$'s range, like we did
for individual partitions $\bold\Pi$ in the case of $\ex\bigl[\mathcal S_n\bigr]$. 

For a generic set $\mathcal C$ of $(\bold x,y)$ with $\bold x,\bold y\in [0,1]^n$, we define
\begin{align*}
&\pr_{\mathcal C}(\bold\Pi_1,\bold\Pi_2)=\idotsint\limits_{(\bold x,\bold y)\in \mathcal C}F(\bold x,\bold y)\,d\bold x d\bold y, 
\quad E_{\mathcal C}\bigl[(\mathcal S_n)_2\bigr]=\sum_{\bold\Pi_1,\bold\Pi_2}\!\!\!\pr_{\mathcal C}(\bold\Pi_1,\bold\Pi_2).
\end{align*}
\begin{Lemma}\label{sum(lor)} Introducing $s_n=n^{1/2}+6\log n$, and 
$
\mathcal C_1=\Bigl\{\bold x,\bold y: \sum_{i\in [n]}(x_i\lor y_i)\le s_n\Bigr\},
$
we have
\[
E_1\bigl[(\mathcal S_n)_2\bigr]- E_{\mathcal C_1}\bigl[(\mathcal S_n)_2\bigr]\le e^{-\Theta(\log^2n)}.
\]
\end{Lemma}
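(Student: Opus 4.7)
The proof closely parallels that of Lemma~\ref{C1}, which performs the analogous first truncation for a single partition. The crucial observation is that, after the change of variables \eqref{change}, the identity \eqref{sumxi=sumlor} rewrites the condition $\sum_{i\in [n]}(x_i\lor y_i)\ge s_n$ defining $\mathcal C_1^c$ simply as $\sigma:=\xi_1+\xi_2+\xi_3\ge s_n$. So the task reduces to bounding the integral contribution from $\sigma\ge s_n$, uniformly over admissible pairs $(\bold\Pi_1,\bold\Pi_2)$.

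First, apply the bound \eqref{F(x,y)<} together with the identity \eqref{sumsqrs} to get $F(\bold x,\bold y)\le_b\exp\bigl(-\tfrac12\sigma^2+\xi_2\xi_3\bigr)\prod_h x_h$ in the $(\xi_j)$-coordinates. Then expand $\exp(\xi_2\xi_3)=\sum_{k\ge 0}(\xi_2\xi_3)^k/k!$ exactly as in the passage from \eqref{sumxi_j} to \eqref{explsum}, and for each $k$ reparametrize $\xi_j=\sigma u_j$ with $u_1+u_2+u_3=1$, $u_j\ge 0$. The Dirichlet simplex integral factors out and produces the combinatorial factor $s(n+m,\nu,k)$ familiar from \eqref{explsum}, while the radial integral is restricted to $[s_n,\infty)$: $\int_{s_n}^{\infty}e^{-\sigma^2/2}\sigma^{N-1}\,d\sigma$ with $N:=n+m+2(\nu+k)$.

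Next, control this tail by splitting the $k$-sum at $K:=m_n=\lceil n^{1/2}\log n\rceil$. For $k\le K$ and $m,\nu\le m_n$, one has $\sigma_k^{*}:=\sqrt{N-1}\le\sqrt{n+5n^{1/2}\log n}\le n^{1/2}+3\log n$, so $s_n-\sigma_k^{*}\ge 3\log n$; the concavity argument used in \eqref{arg} (for Lemma~\ref{C1}) then yields $\int_{s_n}^{\infty}e^{-\sigma^2/2}\sigma^{N-1}\,d\sigma\le e^{-\Theta(\log^2 n)}\int_0^{\infty}e^{-\sigma^2/2}\sigma^{N-1}\,d\sigma$, with the saving uniform in $k\le K$ and $m,\nu\le m_n$. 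For $k>K$, the combinatorial factor itself is negligible: the ratio $s(n+m,\nu,k+1)/s(n+m,\nu,k)=(\nu+k)^2/[(k+1)(n+m+2\nu+2k+1)]$ is $O(\log n/n^{1/2})$ once $k\ge K$ (since $\nu\le m_n\ll K$ and $N\sim n$), so $\sum_{k>K}s(n+m,\nu,k)\le e^{-\Omega(n^{1/2}\log^2 n)}\sum_{k\ge 0}s(n+m,\nu,k)$, well within the desired margin.

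Finally, feed the resulting per-pair bound $\pr(\bold\Pi_1,\bold\Pi_2)-\pr_{\mathcal C_1}(\bold\Pi_1,\bold\Pi_2)\le e^{-\Theta(\log^2 n)}\cdot 2^{\mu}\sum_k s(n+m,\nu,k)$ into the counting chain \eqref{clumsy}--\eqref{sum_{nu>nu_2}}, restricted to admissible pairs (those with $m,\nu\le m_n$, as justified by Corollary~\ref{fruit} and Lemma~\ref{nu>nu_n}). That chain delivers at most a polynomial-in-$n$ bound for $\ex_1\bigl[(\mathcal S_n)_2\bigr]$, and multiplying by the $e^{-\Theta(\log^2 n)}$ savings gives the claim. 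The main technical point is uniformity of the Gaussian-tail saving across the full admissible range of $(m,\nu,\mu,k)$; the two-regime split at $K$ is what makes this work, since the large-$k$ tail is already exponentially small on purely combinatorial grounds, independently of the radial truncation.
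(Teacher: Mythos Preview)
Your proposal follows essentially the same route as the paper's proof: change variables via \eqref{change}, use \eqref{sumxi=sumlor} to rewrite the truncation as $\sigma\ge s_n$, expand $e^{\xi_2\xi_3}$ to obtain the $k$-sum as in \eqref{explsum}, split at $K=m_n$, use the Lemma~\ref{C1} concavity argument for the small-$k$ part, and then sum over pairs via the chain \eqref{clumsy}--\eqref{sum_{nu>nu_2}}. Your check that $\sigma_k^{*}\le n^{1/2}+3\log n$ for $k\le K$ (so that $s_n-\sigma_k^{*}\ge 3\log n$) matches the paper exactly.

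There is one imprecision in your large-$k$ tail argument. You assert that the ratio $s(n+m,\nu,k+1)/s(n+m,\nu,k)=(\nu+k)^2/[(k+1)(n+m+2\nu+2k+1)]$ is $O(\log n/n^{1/2})$ ``once $k\ge K$''. This holds for $k$ of order $m_n$, but fails for large $k$: as $k\to\infty$ the ratio tends to $1/2$, not to zero (and your parenthetical ``$N\sim n$'' breaks down once $k$ is comparable to $n$). So your geometric-decay reasoning, as stated, does not deliver the claimed $e^{-\Omega(n^{1/2}\log^2 n)}$ factor. The paper handles $S_{>m_n}$ differently: it extends the radial integral back to $[0,\infty)$, observes the ratio is uniformly below $2/3$ for $k>m_n$ (hence the tail is dominated by its first term), and then bounds that first term directly by Stirling to get \eqref{S_><}, which is the full-sum bound \eqref{sum_k} times $(n+m)^{-m_n}=e^{-\Theta(n^{1/2}\log^2 n)}$. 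Your conclusion is correct, but you should either adopt the paper's direct Stirling bound on $s(n+m,\nu,m_n)$, or else track the product $\prod_{j<K}r_j$ rather than the ratios past $K$.
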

\begin{proof} We already observed, \eqref{sumxi=sumlor}, that $\sum_i (x_i\lor y_i)=\sum_j\xi_j$. 
So,
similarly to \eqref{sumxi_j}-\eqref{explsum} we have: 
\begin{multline}\label{P-P_1}
\pr(\bold\Pi_1,\bold\Pi_2)-\pr_{\mathcal C_1}(\bold\Pi_1,\bold\Pi_2)\\
\le_b 2^{\mu}\!\!\!\!\iiint\limits_{\xi_1+\xi_2+\xi_3\ge s_n}\!
\!\!\!\!\exp\Biggl(\!-\frac{1}{2}\Bigl(\sum_j\xi_j\Bigr)^2+\xi_2\xi_3\!\Biggr)
\frac{\xi_1^{n+m-1}}{(n+m-1)!}\cdot\frac{(\xi_2\xi_3)^{\nu-1}}
{[(\nu-1)!]^2}\,d\boldsymbol{\xi}\\
= 2^{\mu}\sum_{k\ge 0}\frac{[(\nu-1+k)!]^2}{[(\nu-1)!]^2k!(n+m+2(\nu+k)-1)!}\\
\times\int_{s\ge s_n}\!\!\exp\Bigl(-\frac{s^2}{2}\Bigr) s^{n+m+2(\nu+k)-1}\,ds.
\end{multline}
(Relaxing the constraint on $s$ to $s\ge 0$ we get back to \eqref{explsum}.) The last integrand attains its maximum at
\[
s_{\text{max}}=\bigl(n+m+2(\nu+k)-1\bigr)^{1/2},
\]
which is below $s_n-3\log n$ if $k\le m_n$. Let $S_{\le m_n}$ and $S_{>m_n}$ denote the
sub-sums of the sum above, for $k\le m_n$ and $k>m_n$ respectively. Then, expanding integration to 
$[0,\infty)$, we obtain
\begin{align*}
S_{>m_n}&\le \sum_{k>m_n}\frac{[(\nu-1+k)!]^2}{[(\nu-1)!]^2\,k!\,(n+m+2(\nu+k)-1)!!}\\
&\le_b \frac{[(\nu+m_n)!]^2}{[(\nu-1)!]^2\,m_n!\,(n+m+2(\nu+m_n)+1)!!};
\end{align*}
since $\nu\le m_n$, the ratio of the consecutive terms in the sum is below $2/3$. Droping
$[(\nu-1)!]^2$ in the denominator and using the Stirling formula for the other factorials, we simplify the
bound to
\begin{equation}\label{S_><}
S_{>m_n}\le_b \left(\frac{e}{n+m}\right)^{\frac{n+m}{2}}\!\! (n+m)^{-(\nu+m_n)}.
\end{equation}
The bound is smaller than the bound \eqref{sum_k} for the full sum of $s(n+m,\nu,k)$ by the
factor $(n+m)^{m_n}$. Turn to $S_{\le m_n}$. This time the bottom integral over $s\ge s_n$ in \eqref{P-P_1} is small,
compared to the integral over all $s\ge 0$, because for $k\le m_n$ the maximum point of the
integrand is at distance $3\log n$, at least, from the interval $[s_n,\infty)$. More precisely, using the
argument in the proof of Lemma \ref{C1}, we have
\[
\int_{s\ge s_n}\!\!\exp\Bigl(-\frac{s^2}{2}\Bigr) s^{n+m+2(\nu+k)-1}\,ds\le_b e^{-8\log^2 n}\bigl(n+m +2(\nu+k)-2\bigr)!!.
\]
Therefore
\begin{equation}\label{S<<}
\begin{aligned}
S_{\le m_n}&\le_b e^{-8\log^2 n}\sum_{k\le m_n}\frac{[(\nu-1+k)!]^2}{[(\nu-1)!]^2k!(n+m+2(\nu+k)-1)!!}\\
&\le e^{-8\log^2 n}\sum_{k\ge 0}\frac{[(\nu-1+k)!]^2}{[(\nu-1)!]^2k!(n+m+2(\nu+k)-1)!!}\\
&=e^{-8\log^2 n}\sum_{k\ge 0}s(n+m,\nu,k).
\end{aligned}
\end{equation}
Combining \eqref{S_><}, \eqref{S<<} and \eqref{sum_k} we transform the inequality \eqref{P-P_1}
into
\begin{equation}\label{P-P1expl}
P(\bold\Pi_1,\bold\Pi_2)-P_{\mathcal C_1}(\bold\Pi_1,\bold\Pi_2)\le e^{-\Theta(\log^2 n)}\, 2^\mu \!\left(\frac{e}{n+m}\!\right)^{\frac{n+m}{2}}\!\!
(n+m)^{-\nu}.
\end{equation}
So,  like the part {\bf (1)\/} in  the  proof of Lemma \ref{nu>nu_n},
\begin{equation}\label{sum(P(P1,P_2)-P_1(P_1,P_2))}
\begin{aligned}
&\quad\sum_{\bold\Pi_1,\bold\Pi_2}\bigl[P(\bold\Pi_1,\bold\Pi_2)-P_{\mathcal C_1}(\bold\Pi_1,\bold\Pi_2)]
\le e^{-\Theta(\log^2n)}n!\,m_n\\
&\times\!\!\! \sum_{m\le m_n]}\!\!\!\left(\frac{e}{n+m}\right)^{\frac{n+m}{2}}
\left[(n+m)^{0}\, 2^{\frac{n-m-2\cdot 0}{2}}\left(\frac{n-m-2\cdot 0}{2}
\right) ! \right]^{-1}\\
&\qquad\qquad\qquad=e^{-\Theta(\log^2n)}.
\end{aligned}
\end{equation}
\end{proof}

We need some additional reduction of the last
range $\mathcal C_2$. The bound \eqref{F(x,y)<} will continue to be the key tool, 
until the resulting range is narrow enough to permit a sufficiently sharp bound of the double product
\[
G(\bold x,\bold y)=\prod_{(i\neq j)\in D^c_1\cap D^c_2}\!\!\!\!\!\!\!\bigl[1-x_ix_j-y_iy_j+(x_i\wedge y_i)(x_j\wedge y_j)\bigr]
\]
in \eqref{P_1,2=intF}.  
Define $\mathcal N=\mathcal N(\bold\Pi_1,\bold\Pi_2)$ and $\mathcal M=\mathcal M(\bold\Pi_1,\bold\Pi_2)$ as the vertex set of all odd
cycles and even cycles, of length $4$ or more,  and the vertex set of the edges common to
both partitions, respectively. So $|\mathcal N|=m+2\nu$, and $|\mathcal M|=n-(m+2\nu)$. Arguing as
in the proof of Lemma \ref{simple1,2},  but retaining more
terms,  we have 
\begin{equation}\label{G(x,y)<expl}
\begin{aligned}
G(\bold x,\bold y)&\le \exp\Biggl(\!-\frac{s_1^2}{2}-\frac{s_2^2}{2}+\frac{s_{1,2}^2}{2}+
\frac{1}{2}\sum_{i\in \mathcal M}x_i^2 +\sum_{(i\neq j)\in M_1\cap M_2}\!\!\!\!\!\!x_ix_j\\
&\,\,\,\,-\frac{1}{4}\Bigl(\sum_{i\in [n]} (x_i\wedge y_i)^2\Bigr)^2 +O\Bigl(\sum_{i\in \mathcal N}(x_i^2+y_i^2)\!\Bigr)+O\Bigl(\sum_{i\in [n]}x_i^4\Bigr)\!\Biggr).
\end{aligned}
\end{equation}
Thus we  have to find sharp
approximations of the three explicit sums and to establish the $o(1)$ bounds of the remainders for almost all $(\bold x,\bold y)\in \mathcal C_1$. With those approximations at hand we will obtain an explicit upper bound for $\ex\bigl[(\mathcal S_n)_2\bigr]$.  For brevity will not present a proof of a matching lower bound. \\

{\bf (3)\/} By \eqref{change} and \eqref{xij=}, $s:=\xi_1+\xi_2+\xi_3=\sum_{i\in [n]}x_i'+\sum_{i\in A\cup B}y_i'.$
\begin{Lemma}\label{C1-C2} Define $\bold u'=\{u_i'\}_{i\in [n]}$, where $u_i'=x_i'/s$, for $i\in [n]$, and $u_i'=y_i'/s$ for $i\in A\cup B$. Define $T_1(\bold u')=\max_i u_i'$. 
For
\[
\mathcal C_2:=\Bigl\{(\bold x,\bold y)\in \mathcal C_1: T_1(\bold u')\le 1.01\frac{\log^2 n}{n}\Bigr\},
\]
we have
\begin{equation*}
\pr_{\mathcal C_1}(\bold\Pi_1,\bold\Pi_2)-\pr_{\mathcal C_2}(\bold\Pi_1,\bold\Pi_2)
\le 2^{\mu} e^{-\Theta(\log^2 n)}\left(\frac{e}{n+m}\right)^{\frac{n+m}{2}}\!\!n^{-\nu}.
\end{equation*}
\end{Lemma}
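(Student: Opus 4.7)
The plan is to follow the template of Lemma \ref{nu>nu_n} and Lemma \ref{sum(lor)}, incorporating the extra indicator $\chi\{T_1(\bold u') \ge x_0\}$ where $x_0 := 1.01 \log^2 n / n$. First I would apply Lemma \ref{simple1,2}, the change of variables \eqref{change}, and the identity \eqref{sumsqrs} to bound the integrand on $\mathcal C_1 \setminus \mathcal C_2$ by $2^\mu \exp(-s^2/2 + \xi_2\xi_3)\prod_h x_h'$, where $s = \xi_1 + \xi_2 + \xi_3$. Integrating the individual $(x_i', y_i')$ within each of the three groups defining $\xi_1, \xi_2, \xi_3$ via \eqref{int,prod} would reduce the unrestricted version to the three-dimensional integral in \eqref{sumxi_j}, whose series expansion \eqref{explsum} and sum bound \eqref{sum_k} are already available.

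The effect of the indicator would be tracked via the union bound $\chi\{T_1(\bold u') \ge x_0\} \le \sum_i \chi\{u_i' \ge x_0\}$ together with exchangeability within each group. An event $u_i' \ge x_0$ in the $\xi_1$-group forces $x_i' \ge x_0 s$; substituting $x_i' = x_0 s + \tilde x_i'$ and rescaling to the simplex of sum $\xi_1 - x_0 s$ gives an explicit ratio of the constrained to unconstrained group-wise integral, equal to $(1-x_0 s/\xi_1)^{n+m-1}$ for an unmarked index and $(1-x_0 s/\xi_1)^{n+m-2}\bigl(1 + O((n+m) x_0)\bigr)$ for a marked index $i \in \text{Odd}_{1,2}$. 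Since $\xi_1 \le s$ on the relevant domain, the base factor is at most $e^{-(n+m-2)x_0} \le e^{-(1-o(1))\log^2 n}$, so summing over the $n$ indices of the $\xi_1$-group contributes an acceptable $e^{-\Theta(\log^2 n)}$ saving relative to the unrestricted integral.

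The analysis in the $\xi_2$- and $\xi_3$-groups is more delicate because each contains only $\nu \le m_n$ variables, so the individual-group ratio $(1 - x_0 s/\xi_g)^{\nu-1}$ need not be small. To extract the desired factor I would expand $e^{\xi_2\xi_3} = \sum_k (\xi_2\xi_3)^k/k!$ and treat the resulting integrals term-by-term: the $k$-th term is an incomplete-beta-type integral in $(\xi_2, \xi_3)$ subject to $\xi_g \ge x_0 s$, which, combined with the Gaussian weighting $e^{-s^2/2}$, can be bounded by its unrestricted analogue times an exponentially small factor. Splitting the $k$-sum at $k^* \asymp \log^2 n$ is essential: for $k \le k^*$ a Chernoff--Binomial estimate for the incomplete-beta tail delivers $e^{-\Theta(\log^2 n)}$, while for $k > k^*$ the series term is already tiny, since typical values $\phi_2 \phi_3 = O(\nu^2/n^2)$ make $(s^2\phi_2\phi_3)^k/k!$ decay super-geometrically past $k^*$.

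Summing the resulting modified series as in \eqref{sum_k} would yield
\[
\pr_{\mathcal C_1}(\bold\Pi_1, \bold\Pi_2) - \pr_{\mathcal C_2}(\bold\Pi_1, \bold\Pi_2) \le 2^\mu e^{-\Theta(\log^2 n)} \Bigl(\frac{e}{n+m}\Bigr)^{(n+m)/2}(n+m)^{-\nu},
\]
which implies the stated bound since $(n+m)^{-\nu} \le n^{-\nu}$. The main obstacle is the handling of the small-dimensional $\xi_2$- and $\xi_3$-groups: the group-wise bound alone is too weak, and the $e^{-\Theta(\log^2 n)}$ saving must be extracted by combining the restriction $\xi_g \ge x_0 s$ with the Gaussian weighting across the full three-dimensional integral, via the term-by-term series analysis described in the previous paragraph.
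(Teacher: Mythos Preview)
Your route diverges from the paper's in one essential respect, and that divergence is what creates the obstacle you flag. The paper does \emph{not} decompose the union bound by the three groups $\xi_1,\xi_2,\xi_3$. Instead it keeps all $n+2\nu$ primed variables (the $x_i'$, $i\in[n]$, together with the $y_j'$, $j\in A\cup B$) as a single block summing to $s$, and invokes Lemma~\ref{intervals1} with that full block. The indicator $\chi\{T_1(\bold u')\ge x_0\}$ and the product $\prod_h u_h'$ are then functions of the normalised vector $\bold u'=\bold z/s$ alone, so the \eqref{union}-style bound applies on the $(n+2\nu-1)$-dimensional simplex: whichever coordinate exceeds $x_0$, the rescaling extracts a factor $(1-x_0)^{\,n+2\nu-2+\cdots}\le e^{-(1+o(1))\cdot 1.01\log^2 n}$, because the \emph{dimension} is $\sim n$ regardless of whether that coordinate sits in a large or small group. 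The remaining integral is then handled exactly as $I_n(m,\nu)$ and bounded via the series $\sum_k s'(n,m,\nu,k)$.

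By contrast, your group-wise framing normalises within each $\xi_g$-block, so the small blocks $g=2,3$ of size $\nu$ yield only $(1-x_0 s/\xi_g)^{\nu-1}$, which (as you note) can be $\approx 1$. Your proposed repair---relax to the constraint $\xi_g\ge x_0 s$ and analyse term-by-term---has a genuine gap: for $\nu$ anywhere above $\log^2 n$ (in particular for $\nu$ near $m_n=n^{1/2}\log n$) the typical value of $\phi_g=\xi_g/s$ is of order $\nu/n\gg x_0$, so the relaxed constraint $\phi_g\ge x_0$ is essentially vacuous and delivers no saving at all, for any fixed $k$. The ``Chernoff--Binomial estimate for the incomplete-beta tail'' you invoke cannot rescue this, since there is no tail to speak of. The splitting at $k^*\asymp\log^2 n$ does not help either: the dominant $k$ in the unrestricted series are $O(1)$, so the large-$k$ tail is already negligible without any constraint, and the small-$k$ terms are precisely where the relaxed constraint is empty.

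In short: the difficulty you identify is an artefact of normalising by $\xi_g$ rather than by $s$. If you redo the union bound on the full $(n+2\nu)$-variable simplex, the $e^{-\Theta(\log^2 n)}$ factor drops out uniformly from the dimension, and no special treatment of the small groups is needed.
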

\begin{proof}
Introduce $L_1',\dots,L_{n+2\nu}'$, the intervals lengths in the random partition of $[0,1]$ by the
$n+2\nu-1$ random points. Analogously to \eqref{sumxi_j}, but using the sharper inequality in Lemma \ref{intervals1}, \eqref{joint<}, we have: with $s:=\xi_1+\xi_2+\xi_3$,
\begin{equation*}
\begin{aligned}
 &\pr_{\mathcal C_1}(\bold\Pi_1,\bold\Pi_2)-\pr_{\mathcal C_2}(\bold\Pi_1,\bold\Pi_2)\\
 &\le_b\,2^{\mu}\!\!\!\!\!\!\idotsint\limits_{\bold x', \bold y'\ge \bold 0\atop T_1(\bold u')>1.01\frac{\log^2 n}{n}}\!\!\!\!\!
e^{-\frac{s^2}{2}+\xi_2\xi_3}
\prod_{h\in\text{Odd}_{1,2}}\!\!\!\!x_h\,\prod_{i\in [n]} dx'_i\prod_{j\in A\cup B}\!\! dy'_j\\
&\le \frac{2^{\mu}}{(n+2\nu-1)!}\,\ex\Biggl[\chi\Bigl(T_1(\bold L')\ge 1.01\frac{\log^2n}{n}\Bigr)\prod_{h\in \text{Odd}_{1,2}}\!\!\!\!\!L_h'\Biggr]\\
&\quad\times \idotsint\limits_{\bold x', \bold y'\ge \bold 0}
e^{-\frac{s^2}{2}+\xi_2\xi_3}\,s^m\prod_{i\in [n]} dx'_i\prod_{j\in A\cup B}\!\! dy'_j.
\end{aligned}
\end{equation*}
Arguing as in \eqref{union}, the expectation factor is less than
\[
e^{-1.01\log^2 n}\frac{(n+2\nu)!}{(n+m+2\nu-2)!}. 
\]
The integral is less than
\begin{align*}
I_n(m,\nu)&:=\iiint\limits_{\xi_j\ge 0} e^{-\frac{s^2}{2}+\xi_2\xi_3} s^m\frac{\xi_1^{n-1}}{(n-1)!}
\cdot\frac{(\xi_2\xi_3)^{\nu-1}}{\bigl[(\nu-1)!\bigr]^2}\,d\boldsymbol\xi\\
&=\sum_{k\ge 0}\frac{\bigl[(\nu-1+k)!\bigr]^2}{(n-1)!\,\bigl[(\nu-1)!\bigr]^2\,k!\,\bigl(2(\nu+k)-1\bigr)!}\\
&\times\iint\limits_{\xi_1,\xi_4\ge 0}e^{-\frac{(\xi_1+\xi_4)^2}{2}} (\xi_1+\xi_4)^m\, \xi_1^{n-1}\xi_4^{2(\nu+k)-1}\,d\xi_1 d\xi_4.
\end{align*}
Here the double integral equals
\[
\frac{(n-1)!\,\bigl(2(\nu+k)-1\bigr)!\,\bigl(n+m+2(\nu+k)-2\bigr)!!}{\bigl(n+2(\nu+k)-1\bigr)!}.
\]
So
\[
I_n(m,\nu)=\sum_{k\ge 0}\frac{\bigl[(\nu-1+k)!\bigr]^2\,\bigl(n+m+2(\nu+k)-2\bigr)!!}
{\bigl[(\nu-1)!\bigr]^2\,k!\,\bigl(n+2(\nu+k)-1\bigr)!}
\]
Therefore 
\begin{align*}
&\quad\pr_{\mathcal C_1}(\bold\Pi_1,\bold\Pi_2)-\pr_{\mathcal C_2}(\bold\Pi_1,\bold\Pi_2)
\le_b 2^{\mu} e^{-\log^2 n}\sum_{k\ge 0} s'(n,m,\nu,k),\\
&s'(n,m,\nu,k):=\frac{\bigl[(\nu-1+k)!\bigr]^2\bigl(n+m+2(\nu+k)-2\bigr)!!\,(n+2\nu)!}{(n+m+2\nu-2)!\,\bigl[(\nu-1)!\bigr]^2\,k!\,\bigl(n+2(\nu+k)-1\bigr)!}.
\end{align*}
The summand $s'(n,m,\nu,k)$ is similar to the summand $s(n+m,\nu,k)$ defined in \eqref{explsum}.
Closely following the derivation of the bound for $\sum_{k\ge 0}s(n,\nu,k)$ in \cite{Pit2}, we obtain
\begin{equation*}
\sum_{k\ge 0} s'(n,m,\nu,k)\le_b n^2\left(\frac{e}{n+m}\right)^{\frac{n+m}{2}}\!\!n^{-\nu},
\end{equation*}
compare to \eqref{sum_k}. The last two bounds complete the proof.
\end{proof}
On $\mathcal C_1\supset \mathcal C_2$ we have 
\[
s=\sum_{ i\in [n]}x_i'+\sum_{i\in A\cup B}y_i'=\sum_{i\in [n]}(x_i\lor y_i)\le s_n=n^{1/2}+6\log n.
\]
and on $\mathcal C_2$
\[
\max\Bigl\{\max_i\frac{x_i'}{s},\,\max_{j\in A\cup B}\frac{y_j'}{s}\le 1.01\frac{\log^2 n}{n}\Bigr\}.
\]
Since $m,\,\nu\le n^{1/2}\log n$, we have then the counterparts of the bounds in \eqref{sumx_h^2}.
Namely,  on $\mathcal C_2$,
\begin{equation}\label{onC_2}
\begin{aligned}
\sum_{i\in \mathcal N}(x_i'+&y_i')\le_b sn^{-1}\log^3n,\quad \sum_{i\in \mathcal N}(x_i'+y_i')^2\le_b n^{-1/2}\log^5n,\\
&\sum_{i\in \mathcal N}(x_i'+y_i')^4\le_b n^{-1}\log^8 n,
\end{aligned}
\end{equation}
\\



{\bf (4)\/} With $\xi:=\sum_{i\in \mathcal M}x_i'(=\sum_{i\in \mathcal M}x_i)$, define $v_i'=x_i'/\xi$ for
$i\in \mathcal M$, $|\mathcal M|=n-m-2\nu$. Introduce $T_2(\bold v')=\sum_{i\in \mathcal M}(v_i')^2$, and $V'$ the set of all $\bold v'$ such that 
\[
\left|\frac{n-m-2\nu}{2}\,T_2(\bold v')-1\right|\le n^{-\sigma}.
\]
\begin{Lemma}\label{C2-C3} For $\sigma<1/3$, let
$
\mathcal C_3=\bigl\{(\bold x,\bold y)\in \mathcal  C_2: \bold v'\in V\bigr\}.
$
Then
\begin{equation*}
\pr_{\mathcal C_2}(\bold\Pi_1,\bold\Pi_2)-\pr_{\mathcal C_3}(\bold\Pi_1,\bold\Pi_2)
\le_b 2^{\mu}\, e^{-\Theta(n^{1/3-\sigma})}\left(\frac{e}{n+m}\right)^{\frac{n+m}{2}}\!\!n^{-\nu}.
\end{equation*}
\end{Lemma}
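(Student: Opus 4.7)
The plan is to mirror the single--partition argument used in Lemma \ref{P2-P3}, now applied to the block of components indexed by the ``common'' vertex set $\mathcal M$. As in Lemma \ref{C1-C2}, start from the inequality
\[
F(\bold x,\bold y)\le_b \exp\!\Bigl(-\tfrac{s_1^2}{2}-\tfrac{s_2^2}{2}+\tfrac{s_{1,2}^2}{2}\Bigr)\prod_{h\in\text{Odd}_{1,2}}\!\!\!\!x_h,
\]
perform the change of variables \eqref{change} (with unit Jacobian), and use the identity \eqref{sumsqrs}, obtaining
\[
F(\bold x,\bold y)\le_b e^{-\tfrac{(\xi_1+\xi_2+\xi_3)^2}{2}+\xi_2\xi_3}\!\!\prod_{h\in\text{Odd}_{1,2}}\!\!\!\!x_h.
\]
The crucial observation is that the components $\{x_i'\}_{i\in\mathcal M}$ enter $F$ only through $\xi := \sum_{i\in\mathcal M}x_i'$, since $\mathcal M\cap (A\cup B)=\emptyset$ and $\xi_1=\xi+\sum_{i\in\mathcal N\setminus B}x_i'+\sum_{i\in B}y_i'$.

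Condition on $\xi$ and all variables outside $\mathcal M$. Then $\bold v'|_{\mathcal M}=(x_i'/\xi)_{i\in\mathcal M}$ is a Dirichlet-type vector that, by Lemma \ref{intervals1}, \eqref{joint<}, has density bounded by $\xi^{|\mathcal M|-1}/(|\mathcal M|-1)!$ times the density of the consecutive interval lengths $\bold{\mathcal L}$ obtained from $|\mathcal M|-1$ uniform points on $[0,1]$. Applying Lemma \ref{sumsofLs}(1) with exponent $s=2$, $\nu=|\mathcal M|=n-m-2\nu\sim n$, and $\eps_\nu=n^{-\sigma}$ (which satisfies $\eps_\nu\gg \nu^{-1/3}$ precisely because $\sigma<1/3$), we get
\[
\pr\!\left(\Bigl|\tfrac{|\mathcal M|}{2}\sum_{i\in\mathcal M}\mathcal L_i^2 - 1\Bigr|\ge n^{-\sigma}\right)=O\!\left(e^{-c\,n^{1/3-\sigma}}\right).
\]

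After integrating out $\bold v'|_{\mathcal M}$ against this tail event, the remaining variables $\xi,\xi_2,\xi_3$ and the $\mathcal N$--components produce exactly the same integral as in Lemma \ref{C1-C2}: the extra $\prod_{h\in\text{Odd}_{1,2}}x_h$ factor is handled as there, and one obtains the familiar sum $\sum_{k\ge 0} s'(n,m,\nu,k)$, which is $\le_b n^2\bigl(e/(n+m)\bigr)^{(n+m)/2}n^{-\nu}$. Combined with the $2^{\mu}$ factor coming from the $2^{\mu}$ alternation patterns (as in \eqref{sumxi_j}) and the tail bound above, this yields
\[
\pr_{\mathcal C_2}(\bold\Pi_1,\bold\Pi_2)-\pr_{\mathcal C_3}(\bold\Pi_1,\bold\Pi_2)\le_b 2^{\mu}e^{-\Theta(n^{1/3-\sigma})}\!\left(\tfrac{e}{n+m}\right)^{\frac{n+m}{2}}\!\!n^{-\nu}.
\]

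The main technical obstacle is the bookkeeping in separating the $\mathcal M$-block from the $\mathcal N$-block in the variable change: one must check that the change of variables \eqref{change} leaves the components indexed by $\mathcal M$ untouched (which holds since $\bold\Pi_1=\bold\Pi_2$ on $\mathcal M$, so $x_i=y_i=x_i'=y_i'$ there) and that the Dirichlet integration over $\bold v'|_{\mathcal M}$ really decouples from the $(\xi_2,\xi_3,\bold y'|_{A\cup B})$ integration. Once that separation is justified, everything else reduces to the already--established sub--exponential concentration estimate \eqref{sumLj^s} and the bookkeeping identities from the proof of Lemma \ref{C1-C2}.
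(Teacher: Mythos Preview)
Your proposal follows essentially the same route as the paper: bound $F(\bold x,\bold y)$ via Lemma \ref{simple1,2}, change variables as in \eqref{change}--\eqref{sumsqrs}, observe that the $\mathcal M$--block enters only through $\xi=\sum_{i\in\mathcal M}x_i'$, factor out the tail probability for $T_2(\boldsymbol{\mathcal L}')$ using Lemma \ref{sumsofLs}(1), and bound the residual integral. The paper does exactly this, introducing the auxiliary variable $\xi_4:=\xi_1-\xi$ so that the residual integral becomes a clean four--fold integral over $(\xi,\xi_4,\xi_2,\xi_3)$, which after a beta--integral collapse in $(\xi,\xi_4)$ reduces \emph{exactly} to the three--dimensional integral in \eqref{sumxi_j} and is then bounded by \eqref{explsum}--\eqref{sum_k}.

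There is one small bookkeeping slip in your write--up: the residual integral here is \emph{not} the integral $I_n(m,\nu)$ from Lemma \ref{C1-C2} (that one carried an $s^m$ factor arising from bounding $\prod_h x_h\le s^m\prod_h u_h$ and led to the sum $\sum_k s'(n,m,\nu,k)$). Here the $\prod_{h}x_h$ factor is integrated out directly together with the other $\mathcal N$--variables, producing $\xi_4^{2m+2\nu-1}/(2m+2\nu-1)!$, and the residual integral is the one from \eqref{sumxi_j}, controlled by $\sum_k s(n+m,\nu,k)$ and \eqref{sum_k}. This does not affect the outcome, since both sums are $\le_b \text{poly}(n)\bigl(e/(n+m)\bigr)^{(n+m)/2}n^{-\nu}$ and the polynomial is swallowed by $e^{-\Theta(n^{1/3-\sigma})}$; but the attribution should be to \eqref{sumxi_j}--\eqref{sum_k} rather than to Lemma \ref{C1-C2}.
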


\begin{proof} Introduce 
\[
\xi_4:=\xi_1-\xi=\sum_{i\in B^c\cap \mathcal M^c}x_i'+\sum_{i\in B}y_i'=\sum_{i\in \text{Odd}_{1,2}}x_i'+
\sum_{i\in A} x_i'+\sum_{i\in B}y_i'.
\]
Then with $s:=\xi+\xi_4+\xi_2+\xi_3$,
\begin{align*}
&\pr_{\mathcal C_2}(\bold\Pi_1,\bold\Pi_2)-\pr_{\mathcal C_3}(\bold\Pi_1,\bold\Pi_2)\\
&\qquad\quad \le_b\,2^{\mu}\idotsint\limits_{\bold x',\,\bold y':\, \bold v'\in V} e^{-\frac{s^2}{2}+\xi_2\xi_3}
\prod_{i\in M} dx'_i\, \prod_{j\in\text{Odd}_{1,2}}\!\!\!\!x_j dx_j\\
&\qquad\qquad\qquad\qquad\quad\times\prod_{k\in A}dx_k'\,\prod_{\ell\in B}dy_{\ell}'\,\prod_{b\in B} dx_b'\,\prod_{a\in A}dy_a'.
\end{align*}
Now the integrand depends on $\{x_i'\}_{i\in \mathcal M}$ only through $\xi=\sum_{i\in \mathcal M}x_i'$. So, introducing 
the random intervals $\mathcal L_1',\dots, \mathcal L_{n-m-2\nu}'$ forming the partition of $[0,1]$, we obtain
\begin{align*}
&\pr_{\mathcal C_2}(\bold\Pi_1,\bold\Pi_2)-\pr_{\mathcal C_3}(\bold\Pi_1,\bold\Pi_2)\\
&\le_b 2^{\mu}\!\pr\Bigl(\Bigl|\frac{n-m-2\nu}{2}T_2(\boldsymbol{\mathcal L}')-1\Bigr|\ge n^{-\sigma}\!\Bigr)
\iiiint\limits_{\xi, \,\xi_j\ge 0}e^{-\frac{s^2}{2}+\xi_2\xi_3}\frac{\xi^{|\mathcal M|-1}\,d\xi}{(|\mathcal M|-1)!}\\
&\qquad\qquad\qquad\times \frac{\xi_4^{2m+2\nu-1}\,d\xi_4}{(2m+2\nu-1)!}\cdot
 \frac{\xi_2^{\nu-1}\,d\xi_2}{(\nu-1)!}\cdot\frac{\xi_3^{\nu-1}\,d\xi_3}{(\nu-1)!}.
\end{align*}
The probability is of order $e^{-\Theta(n^{1/3-\sigma})}$, and the integral equals the bottom
$3$-dimensional integral in \eqref{sumxi_j}. Jointly with \eqref{explsum} and \eqref{sum_k}
this proves the claim.
\end{proof}

Finally, introduce $T_3(\bold v')=\sum\limits_{(i,j)\in M_1\cap M_2}\!\!v_i' v_j'$;  (here, of course, $i,j\in\mathcal M$).
\begin{Lemma}\label{C3-C4} For $\sigma<1/3$, let
\[
\mathcal C_4=\left\{(\bold x,\bold y)\in \mathcal  C_3:\left|2(n-m-2\nu)T_3(\bold v')-1\right|\le
n^{-\sigma}\right\}; 
\]
Then
\begin{equation*}
\pr_{\mathcal C_3}(\bold\Pi_1,\bold\Pi_2)-\pr_{\mathcal C_4}(\bold\Pi_1,\bold\Pi_2)
\le 2^{\mu} e^{-\Theta(n^{1/3-\sigma})}\left(\frac{e}{n+m}\right)^{\frac{n+m}{2}}\!\!n^{-\nu}.
\end{equation*}
\end{Lemma}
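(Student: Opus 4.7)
The plan is to mirror the proof of Lemma \ref{C2-C3} almost verbatim, replacing the role of the quadratic form $T_2(\bold v')$ by the bilinear form $T_3(\bold v')$, and invoking the companion tail bound \eqref{sumsofprods} from Lemma \ref{sumsofLs} instead of \eqref{sumLj^s}. First I would apply the bound \eqref{F(x,y)<} to the integrand, perform the change of variables \eqref{change}, and write $\xi_1 = \xi + \xi_4$ with $\xi := \sum_{i\in\mathcal M} x_i'$ and
\[
\xi_4 = \sum_{i\in \text{Odd}_{1,2}} x_i' + \sum_{i\in A} x_i' + \sum_{i\in B} y_i'.
\]
The key observation is that the defining constraint of $\mathcal C_3 \setminus \mathcal C_4$ involves only the scale-invariant quantity $T_3(\bold v')$, where $v_i' = x_i'/\xi$ for $i\in\mathcal M$; none of the other variables $\xi, \xi_2, \xi_3, \xi_4$, $\{x_h'\}_{h\in \text{Odd}_{1,2}}$, $\{x_k', y_k'\}_{k\in A\cup B}$ enter into the constraint.

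Next I would apply Lemma \ref{intervals1}, estimate \eqref{joint<}, to the sub-vector $\{x_i'\}_{i\in\mathcal M}$: this replaces its joint density by $\tfrac{\xi^{|\mathcal M|-1}}{(|\mathcal M|-1)!} f_{\boldsymbol{\mathcal L}'}(\bold v')$, where $\boldsymbol{\mathcal L}'$ is the interval-length vector in a random partition of $[0,1]$ into $|\mathcal M|=n-m-2\nu$ pieces. Re-indexing $\mathcal M$ so that $M_1\cap M_2$ consists of the pairs $(j,\,j+|\mathcal M|/2)$ for $j=1,\dots,|\mathcal M|/2$ recasts $T_3(\boldsymbol{\mathcal L}')$ as $\sum_{j=1}^{|\mathcal M|/2} \mathcal L'_j \mathcal L'_{j+|\mathcal M|/2}$, which is precisely the form required by \eqref{sumsofprods}. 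Since $|\mathcal M|=n(1+o(1))$ for admissible pairs, \eqref{sumsofprods} yields
\[
\pr\!\left(\bigl|2|\mathcal M|\,T_3(\boldsymbol{\mathcal L}')-1\bigr|\ge n^{-\sigma}\right) = O\!\left(e^{-\Theta(n^{1/3-\sigma})}\right)
\]
for any $\sigma<1/3$.

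What remains is a four-dimensional integral in $(\xi, \xi_4, \xi_2, \xi_3)$, identical in structure to the one that appears in the proof of Lemma \ref{C2-C3}. Expanding $e^{\xi_2\xi_3}$ as a power series and applying the beta-type identities \eqref{int,prod} term by term reduces it to the same sum $\sum_{k\ge 0} s'(n,m,\nu,k)$ already bounded in the proof of Lemma \ref{C2-C3} by $n^2(e/(n+m))^{(n+m)/2} n^{-\nu}$ via the estimate \eqref{sum_k} (which is the adaptation of the bound from \cite{Pit2}). Multiplying the two factors and absorbing the polynomial prefactors into the sub-exponential $e^{-\Theta(n^{1/3-\sigma})}$ delivers the claimed bound.

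I do not expect a genuine obstacle: the one point requiring a line of care is that the bilinear form $T_3$ actually matches the hypothesis of Lemma \ref{sumsofLs}(2), but since $M_1\cap M_2$ is by construction a perfect matching on $\mathcal M$, the required pairing of coordinates is automatic. Every other step is a mechanical transcription of the corresponding step in Lemma \ref{C2-C3}, with $T_2$ replaced by $T_3$ and \eqref{sumLj^s} replaced by \eqref{sumsofprods}.
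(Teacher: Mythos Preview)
Your proposal is correct and follows exactly the route the paper intends: the paper's entire proof of Lemma~\ref{C3-C4} is the sentence ``The proof is a copy of the previous argument,'' and you have spelled out precisely that copy, swapping $T_2$ for $T_3$ and invoking \eqref{sumsofprods} in place of \eqref{sumLj^s}. One small bookkeeping slip: the four-dimensional integral in the proof of Lemma~\ref{C2-C3} collapses (via $\xi+\xi_4=\xi_1$) to the three-dimensional integral in \eqref{sumxi_j} and hence to the sum $\sum_{k\ge 0} s(n+m,\nu,k)$ bounded by \eqref{sum_k}, not to the sum $\sum_{k\ge 0} s'(n,m,\nu,k)$, which appears in the proof of Lemma~\ref{C1-C2}; this does not affect the argument, since both sums are bounded the same way.
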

\noindent The proof is a copy of the previous argument.

The Lemmas \ref{sum(lor)}, \ref{C1-C2}, \ref{C2-C3} and \ref{C3-C4} imply

\begin{Lemma}\label{comb} For every admissible pair $\bold\Pi_1,\,\bold\Pi_2$,
\[
P(\bold\Pi_1,\bold\Pi_2)-P_{\mathcal C_4}(\bold\Pi_1,\bold\Pi_2)\le_b e^{-\Theta(\log^2n)}\,2^{\mu} \left(\frac{e}{n+m}\right)^{\frac{n+m}{2}}\!\!n^{-\nu}.
\]
Here  $P_{\mathcal C_4}(\bold\Pi_1,\bold\Pi_2)$ is the integral of $F(\bold x,\bold y)$ over 
\[
\mathcal C_4\subset \{\bold x,\,\bold y\in [0,1]^n:\, x_i=y_i\ \text{ if }\,\bold\Pi_1(i)=\bold\Pi_2(i)\},
\]
defined by the additional constraints:  denoting $\xi:=\sum_{i\in \mathcal M} x_i'\,\left(=\sum_{i\in \mathcal M}x_i\right)$,
\begin{align}
s&:=\sum_{i\in [n]} x_i'+\sum_{j\in A\cup B} y_j' (=\xi_1+\xi_2+\xi_3)\le s_n (=n^{1/2}+6\log n), \label{1}\\
&\max\left\{\max_{i\in [n]} x_i',\,\max_{j\in A\cup B} y_j'\right\}\le 1.01\frac{s\log^2 n}{n},
\label{2}\\
&\left|\frac{|\mathcal M|}{2\xi^2}\sum\limits_{i\in\mathcal M} x_i^2-1\right|\le n^{-\sigma},\quad
\left|\frac{2|\mathcal M|}{\xi^2}\sum\limits_{(i,j)\in M_1\cap M_2}\!\!\!\!\!\!\!\!x_ix_j-1\right|\le n^{-\sigma}.
\label{3}
\end{align}
\end{Lemma}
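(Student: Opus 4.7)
The plan is to telescope the difference along the nested chain $\mathcal C_4 \subset \mathcal C_3 \subset \mathcal C_2 \subset \mathcal C_1 \subset \mathcal C_0 := [0,1]^{2n}$. Setting $P_{\mathcal C_0} := P$, write
\[
P(\bold\Pi_1,\bold\Pi_2) - P_{\mathcal C_4}(\bold\Pi_1,\bold\Pi_2) = \sum_{j=0}^{3} \bigl[P_{\mathcal C_j}(\bold\Pi_1,\bold\Pi_2) - P_{\mathcal C_{j+1}}(\bold\Pi_1,\bold\Pi_2)\bigr]
\]
and bound each of the four increments separately using one of the previously established lemmas.

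For the first increment ($j=0$) I would invoke the per-pair inequality \eqref{P-P1expl}, proved in the course of establishing Lemma \ref{sum(lor)}, which yields
\[
P - P_{\mathcal C_1} \le_b 2^{\mu}\, e^{-\Theta(\log^2 n)} \Bigl(\frac{e}{n+m}\Bigr)^{(n+m)/2} (n+m)^{-\nu};
\]
since $n+m \ge n$, we may weaken $(n+m)^{-\nu}$ to $n^{-\nu}$. The three remaining increments are precisely the content of Lemmas \ref{C1-C2}, \ref{C2-C3} and \ref{C3-C4}, each delivering a bound of the form
\[
2^{\mu}\, e^{-\Theta(n^{1/3-\sigma})} \Bigl(\frac{e}{n+m}\Bigr)^{(n+m)/2} n^{-\nu}.
\]

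To assemble the final estimate I would add the four bounds. Under the admissibility hypothesis $m,\nu \le m_n$, all four share the common factor $2^{\mu}(e/(n+m))^{(n+m)/2} n^{-\nu}$; comparing the four exponential prefactors, the restriction $\sigma < 1/3$ guarantees $n^{1/3-\sigma} \gg \log^2 n$, so the dominant contribution is the one coming from the first increment, giving the advertised prefactor $e^{-\Theta(\log^2 n)}$.

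There is no genuine technical obstacle here; Lemma \ref{comb} is a bookkeeping concatenation of the four earlier estimates. The only point worth verifying is that the per-pair bound \eqref{P-P1expl}, which appears inside the proof of Lemma \ref{sum(lor)} rather than in its statement, is indeed available as input for the first telescoping step. This is immediate: the summation over admissible pairs in the proof of Lemma \ref{sum(lor)} is carried out only \emph{after} the per-pair bound has been obtained, so \eqref{P-P1expl} may be applied to the individual $(\bold\Pi_1,\bold\Pi_2)$ here.
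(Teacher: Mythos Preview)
Your proposal is correct and matches the paper's own approach exactly: the paper simply states that Lemma~\ref{comb} follows by combining Lemmas~\ref{sum(lor)}, \ref{C1-C2}, \ref{C2-C3} and \ref{C3-C4}, which is precisely your telescoping argument, and you rightly note that the per-pair estimate \eqref{P-P1expl} (rather than the summed statement of Lemma~\ref{sum(lor)}) is what is actually needed for the first increment. One harmless slip: Lemma~\ref{C1-C2} carries the prefactor $e^{-\Theta(\log^2 n)}$, not $e^{-\Theta(n^{1/3-\sigma})}$, but this only reinforces your conclusion that the dominant term is $e^{-\Theta(\log^2 n)}$.
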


The constraint \eqref{3} involves only $\{x_i\}_{i\in \mathcal M}$, and the constraint \eqref{2} imposes the bound for the individual components $x_i'$ and $y_j'$. Since $s_n\le 2n^{1/2}$, the latter implies that 
\begin{equation}\label{max[n],A,B<}
\max\left\{\max_{i\in [n]} x_i',\,\max_{j\in A\cup B} y_j'\right\}\le 3 n^{-1/2}\log^2 n,
\end{equation}
obviating the constraint $x_i'\le 1,\,y_j'\le 1$. On $\mathcal C_4$ the inequality 
\eqref{G(x,y)<expl} can be drastically simplified. First of all, the bottom part of the bound \eqref{G(x,y)<expl} is
\[
-\frac{1}{4}\Bigl(\sum_{i\in\mathcal M} x_i^2\Bigr)^2 +O\bigl(n^{-1/2}\log^5n\bigr).
\]
Second, 
\begin{align*}
\sum_{i\in\mathcal M}x_i^2&=\bigl(1+O(n^{-\sigma})\bigr)\frac{2\xi^2}{|\mathcal M|}=\frac{2\xi^2}{|\mathcal M|}+O(n^{-\sigma}),\\
\sum_{(i,j)\in M_1\cap M_2}\!\!\!\!\!\!\!\!x_ix_j&=\bigl(1+O(n^{-\sigma})\bigr)\frac{\xi^2}{2|\mathcal M|}
=\frac{\xi^2}{2|\mathcal M|} +O(n^{-\sigma}),
\end{align*}
{\it and\/} $\xi=\xi_1\bigl(1+O(n^{-1}\log^2 n)\bigr)$. In addition, $|\mathcal M|=n\bigl(1+O(n^{-1/2}\log n)\bigr)$. Therefore \eqref{G(x,y)<expl} becomes
\begin{align*}
G(\bold x,\bold y)&\le \bigl(1+O(n^{-\sigma})\bigr)\exp\bigl[H(\boldsymbol\xi)\bigr],\,\,
H(\boldsymbol\xi)=-\frac{s^2}{2}+\xi_2\xi_3+\frac{3\xi_1^2}{2n}
-\frac{\xi_1^4}{n^2}.
\end{align*}
\begin{Lemma}\label{P*sim}
\begin{align*}
&\pr_{\mathcal C_4}(\bold\Pi_1,\bold\Pi_2)\le \frac{2^{\mu}\bigl(1+O(n^{-\sigma})\bigr)}
{(n+m-1)!\,\bigl[(\nu-1)!\bigr]^2}\cdot \mathcal I(n+m,\nu),\\
&\mathcal I(n+m,\nu):=\iiint\limits_{(\xi_1,\xi_2,\xi_3)\in \bold R}\!
\exp\bigl[H(\boldsymbol{\xi})\bigr]\,\xi_1^{n+m-1}\cdot(\xi_2\xi_3)^{\nu-1}\,d\boldsymbol{\xi},\\
&\,\,\bold R:=\bigl\{\boldsymbol\xi\ge\bold 0:\, \xi_1\le n^{1/2}+6\log n;\,\,\xi_2, \xi_3\le 2\log^3 n\bigr\}.
\end{align*}
\end{Lemma}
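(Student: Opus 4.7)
The plan is to combine the pointwise bound on $G(\bold x,\bold y)$ valid on $\mathcal C_4$ with repeated applications of the Dirichlet-type identity \eqref{int,prod}, after the change of variables \eqref{change}. The alternating choices \eqref{alter} along the $\mu$ circuits collectively contribute the overall factor $2^\mu$, leaving the integration over the single ``branch'' \eqref{cone}, which in the primed variables reads simply $\bold x',\bold y'\ge\bold 0$. The Jacobian equals one; $\prod_{h\in\text{Odd}_{1,2}} x_h=\prod_h x_h'$ since $\text{Odd}_{1,2}\cap(A\cup B)=\emptyset$; and on $\mathcal C_4$ we have $G(\bold x,\bold y)\le (1+O(n^{-\sigma}))\exp[H(\boldsymbol\xi)]$ from the discussion immediately preceding the lemma.

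Next I would relax $\mathcal C_4$ by discarding the individual-component constraints \eqref{2}--\eqref{3} and keeping only $\xi_1\le s_n$ from \eqref{1} together with the upper bounds $\xi_2,\xi_3\le 2\log^3 n$, the latter two being automatic consequences of \eqref{max[n],A,B<} and $|A|=|B|=\nu\le m_n$. This enlargement preserves the inequality. The majorizing integrand now depends on $(\bold x',\bold y')$ only through $\xi_1,\xi_2,\xi_3$ and through the monomial $\prod_h x_h'$, so integrating out the individual components can be carried out explicitly.

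To do so, partition $[n]=\text{Odd}_{1,2}\sqcup\mathcal M\sqcup A\sqcup B$ and split the inner integration into six independent groups, namely $\{x_h'\}_{\text{Odd}_{1,2}},\{x_i'\}_{\mathcal M},\{x_i'\}_A,\{y_i'\}_B,\{x_i'\}_B,\{y_i'\}_A$ of sizes $m,\, n-m-2\nu,\,\nu,\,\nu,\,\nu,\,\nu$, with sums $\alpha,\beta,\gamma_1,\gamma_2,\xi_2,\xi_3$ and $\xi_1=\alpha+\beta+\gamma_1+\gamma_2$. Applying \eqref{int,prod} group by group (with exponent one on each $x_h'$ in the first group and zero everywhere else) produces, via the standard scaling argument, the densities $\alpha^{2m-1}/(2m-1)!,\,\beta^{n-m-2\nu-1}/(n-m-2\nu-1)!,\,\gamma_1^{\nu-1}/(\nu-1)!,\,\gamma_2^{\nu-1}/(\nu-1)!,\,\xi_2^{\nu-1}/(\nu-1)!,\,\xi_3^{\nu-1}/(\nu-1)!$. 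A final convolution in $(\alpha,\beta,\gamma_1,\gamma_2)$ via the same identity --- whose combined ``Dirichlet parameters'' sum to $2m+(n-m-2\nu)+\nu+\nu=n+m$ --- collapses the four gamma densities into a single one, $\xi_1^{n+m-1}/(n+m-1)!$. What remains is precisely $2^\mu(1+O(n^{-\sigma}))\cdot[(n+m-1)!\,((\nu-1)!)^2]^{-1}\cdot\mathcal I(n+m,\nu)$ over $\bold R$, which is the claimed bound.

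The main obstacle is purely the bookkeeping: tracking which primed variable resides in which of the six subsets after \eqref{change}, verifying that the monomial $\prod_h x_h'$ contributes weight one (not zero) in the Dirichlet identity precisely for the $m$ coordinates in $\text{Odd}_{1,2}$ so that the total added degree is $2m$ and the final exponent matches $n+m-1$, and checking that the inherited upper limits on $\xi_1,\xi_2,\xi_3$ coincide with the region $\bold R$ stated in the lemma. The analytic content beyond these combinatorial checks is nothing more than iterated use of \eqref{int,prod}.
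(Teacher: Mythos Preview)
Your proposal is correct and follows essentially the same route the paper has in mind: the paper omits the proof, pointing to the argument for Lemma~\ref{C2-C3}, which is precisely the ``bound on $\mathcal C_4$, change to primed variables, then Dirichlet-integrate each block and convolve'' scheme you execute. Your six-block decomposition and the collapse $2m+(n-m-2\nu)+\nu+\nu=n+m$ are exactly right; the only cosmetic point is that the bound $\xi_2,\xi_3\le 2\log^3 n$ should be cited from \eqref{2} directly (giving $\approx 1.01\log^3 n$) rather than from the looser \eqref{max[n],A,B<} (which would give $3\log^3 n$), but this has no bearing on the argument.
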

\noindent The proof, of course, is based on the description of $\mathcal C_4$, and it runs along the familiar lines
of our preceding proofs; in particular, see the proof of Lemma \ref{C2-C3}. 
We omit the details. Furthermore, by the asymptotic formula for $\mathcal I(n,\nu)$ from \cite{Pit2} (3.60), we have 
\begin{align*}
\mathcal I(n+m,\nu)=(1+o(1))\left(\frac{\pi e}{n+m}\right)^{1/2}\!\left(\frac{n+m}{e}\right)^{\frac{n+m}{2}}
\!\!\!(n+m)^{-\nu}\bigl[(\nu-1)!\bigr]^2.
\end{align*}
So, by Lemma \ref{P*sim},
\begin{equation*}
\begin{aligned}
\pr_{\mathcal C_4}(\bold\Pi_1,\bold\Pi_2)&\le  \frac{2^{\mu}\,\bigl(1+O(n^{-\sigma})\bigr)}{(n+m-1)!}
\left(\frac{\pi e}{n+m}\right)^{1/2}\!\left(\frac{n+m}{e}\right)^{\frac{n+m}{2}}\!\!(n+m)^{-\nu}.
\end{aligned}
\end{equation*}
Therefore, within the factor $1+O(n^{-\sigma})$,
\begin{equation*}
\begin{aligned}
&\sum_{\bold\Pi_1,\bold\Pi_2}P_{\mathcal C_4}(\bold\Pi_1,\bold\Pi_2)\\
&\le \sum_{m\le m_n} \binom{n}{m}\,
\frac{f(m)}{(n+m-1)!}
\left(\frac{\pi e}{n+m}\right)^{1/2}\left(\frac{n+m}{e}\right)^{\frac{n+m}{2}}\\
&\,\,\,\,\,\times\sum_{\nu\le m_n}\binom{n-m}{2\nu}(n-m-2\nu-1)!!\,(n+m)^{-\nu}
\cdot\sum_{\mu}2^{2\mu}f(2\nu,\mu)
\end{aligned}
\end{equation*}
\begin{equation*}
\begin{aligned}
&=n!\sum_{m\le m_n}\frac{f(m)}{m!\,(n+m-1)!}\left(\frac{\pi e}{n+m}\right)^{1/2}\left(\frac{n+m}{e}\right)^{\frac{n+m}{2}}\\
&\,\,\,\,\,\times\sum_{\nu\le m_n}\frac{(n-m-2\nu-1)!!}{(n-m-2\nu)!\,(2\nu)!}
(n+m)^{-\nu}
\cdot\sum_{\mu}2^{2\mu}f(2\nu,\mu).
\end{aligned}
\end{equation*}
cf.  \eqref{clumsy}. The sum over $\mu$ is $e^{-1}(2\nu)! (1+O(1/\nu))$, see \eqref{sum_{mu}}. So the
sum over $\nu$ is asymptotic to
\begin{align*}
&\frac{e^{-1}}{(n-m)!!}\sum_{\nu\le m_n}(n+m)^{-\nu}\prod_{j=0}^{\nu-1}(n-m-2j)
\sim
\frac{e^{-1}}{(n-m)!!}\sum_{\nu\le m_n}e^{-\frac{\nu^2}{n}-\frac{2\nu m}{n}}.
\end{align*}
Thus, since $f(m)/m!=e^{-1}\sqrt{\frac{2}{\pi m}}(1+O(m^{-1}))$, the $m$-term in the resulting sum 
is (within a factor $1+O(m^{-1})$)
\begin{align*}
&e^{-2}\sqrt{\frac{2}{\pi m}}\cdot\frac{n!}{(n-m)!!\,(n+m-1)!}
\left(\frac{ e}{n+m}\right)^{1/2}\left(\frac{n+m}{e}\right)^{\frac{n+m}{2}}\\
&\times \sum_{\nu\le m_n}e^{-\frac{\nu^2}{n}-\frac{2\nu m}{n}}\\
&
\sim e^{-3/2}\sqrt{\frac{2}{\pi^2 m}}\cdot e^{-\frac{m^2}{2n}}\sum_{\nu\le m_n}e^{-\frac{\nu^2}{n}-\frac{2\nu m}{n}}.
\end{align*}
So
\begin{equation*}
\begin{aligned}
&\sum_{\bold\Pi_1,\bold\Pi_2}\!\!P_{\mathcal C_4}(\bold\Pi_1,\bold\Pi_2)
\lesssim e^{-3/2}\sqrt{\frac{2}{\pi^2}}\sum_{m,\,\nu\le m_n}\!\!\!\!\frac{1+O(m^{-1})}{m^{1/2}}e^{-\frac{m^2}{2n}-\frac{2\nu m}{n}-\frac{\nu^2}{n}} \sim c n^{3/4},\\
 &\qquad\qquad\quad c:=e^{-3/2}\sqrt{\frac{2}{\pi^2}}\iint\limits_{x,\,y\ge 0} x^{-1/2}e^{-\frac{x^2}{2}-2xy -y^2}\,dxdy\approx
 0.617.
\end{aligned}
\end{equation*}
Thus, since $E[\mathcal S_n]$ is of order $n^{1/4}$, we have
\begin{Theorem}\label{E[Sn^2]sim} $\ex\bigl[\mathcal S_n^2\bigr]\lesssim cn^{3/4}$.
\end{Theorem}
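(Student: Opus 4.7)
The plan is to reduce $\ex[\mathcal S_n^2]$ to the ordered falling factorial $\ex[(\mathcal S_n)_2]=\sum_{\bold\Pi_1\neq\bold\Pi_2}\pr(\bold\Pi_1,\bold\Pi_2)$, since $\ex[\mathcal S_n^2]=\ex[(\mathcal S_n)_2]+\ex[\mathcal S_n]$ and $\ex[\mathcal S_n]=O(n^{1/4})$ by Theorem \ref{E[On]}, a lower order term. So the entire task is to bound $\sum_{\bold\Pi_1\neq\bold\Pi_2}\pr(\bold\Pi_1,\bold\Pi_2)$ by $cn^{3/4}(1+o(1))$. Pairs with unequal odd parties contribute $0$, and Lemma \ref{nu>nu_n} already shows that pairs with $m(\bold\Pi_1,\bold\Pi_2)>m_n$ or $\nu(\bold\Pi_1,\bold\Pi_2)>m_n$ contribute $e^{-\Theta(\log^2 n)}$. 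Thus we may restrict to admissible pairs.

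The first main step is to use the representation from Lemma \ref{p(P1,P2stab)=}, change variables via \eqref{change}, and apply the crucial product bound in Lemma \ref{simple1,2}. Following Lemmas \ref{sum(lor)}--\ref{C3-C4}, I would successively cut down the integration domain: first restricting $\sum_i(x_i\lor y_i)=\xi_1+\xi_2+\xi_3\le s_n=n^{1/2}+6\log n$, then bounding the maximum component $\max\{x_i',y_j'\}$ via Lemma \ref{intervals1} with an interval-lengths comparison, and finally pinning down $\sum_{i\in\mathcal M}x_i^2$ and $\sum_{(i,j)\in M_1\cap M_2}x_ix_j$ to within $n^{-\sigma}$ of their ``typical'' values via Lemma \ref{sumsofLs}. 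Each excised region contributes $e^{-\Theta(\log^2n)}$ or $e^{-\Theta(n^{1/3-\sigma})}$ times the benchmark weight $2^{\mu}\bigl(e/(n+m)\bigr)^{(n+m)/2}n^{-\nu}$, which after the counting step is negligible.

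The main obstacle, and where the asymptotic constant $c$ emerges, is the sharp approximation of the double-indexed product $G(\bold x,\bold y)$ on the core set $\mathcal C_4$. Here one uses the refinement \eqref{G(x,y)<expl} of Lemma \ref{simple1,2}, retaining the $(3\xi_1^2/(2n))$ and $(-\xi_1^4/n^2)$ terms and showing that the remaining ``error'' sums over $\mathcal N$ and $\sum x_i^4$ are $o(1)$ via the bounds \eqref{onC_2}. The resulting integral $\mathcal I(n+m,\nu)=\iiint\exp[H(\boldsymbol\xi)]\,\xi_1^{n+m-1}(\xi_2\xi_3)^{\nu-1}\,d\boldsymbol\xi$ is evaluated (reusing the saddle-point asymptotic from \cite{Pit2}, eq.~(3.60)) to give the factor $\bigl(\pi e/(n+m)\bigr)^{1/2}\bigl((n+m)/e\bigr)^{(n+m)/2}(n+m)^{-\nu}[(\nu-1)!]^2$.

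The final step is the combinatorial summation. Multiplying by the number \eqref{countpairs} of pairs with parameters $(m,\nu,\mu)$, using \eqref{f(m)sim} and the identity $\sum_{\mu}2^{2\mu}f(2\nu,\mu)\sim e^{-1}(2\nu)!$ from \eqref{sum_{mu}}, and then applying Stirling's formula to the ratios of factorials, the $m$-summand reduces (up to $1+O(m^{-1})$) to $e^{-3/2}\sqrt{2/\pi^2}\,m^{-1/2}e^{-m^2/(2n)}\sum_{\nu\le m_n}e^{-\nu^2/n-2\nu m/n}$. Writing $m=\sqrt n\, x$, $\nu=\sqrt n\, y$ turns the double sum into a Riemann sum of step $n^{-1/2}\cdot n^{-1/2}$, hence contributes $n\cdot n^{-1/4}=n^{3/4}$ times $\iint_{x,y\ge 0}x^{-1/2}e^{-x^2/2-2xy-y^2}\,dx\,dy$, yielding $c\approx 0.617$ after multiplication by $e^{-3/2}\sqrt{2/\pi^2}$. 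The likely subtle point is to verify that the truncation $m,\nu\le m_n$ introduces only negligible error in passing to the full Gaussian-type integral, which follows from the sub-exponential decay of the integrand at infinity. Combining everything with the bound $\ex[\mathcal S_n]=O(n^{1/4})\ll n^{3/4}$ completes the proof.
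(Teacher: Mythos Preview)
Your proposal is correct and follows essentially the same route as the paper: the decomposition $\ex[\mathcal S_n^2]=\ex[(\mathcal S_n)_2]+\ex[\mathcal S_n]$, the restriction to admissible pairs via Lemma \ref{nu>nu_n}, the chain of domain reductions in Lemmas \ref{sum(lor)}--\ref{C3-C4}, the sharp bound \eqref{G(x,y)<expl} on $\mathcal C_4$ leading to Lemma \ref{P*sim} and the asymptotic for $\mathcal I(n+m,\nu)$ from \cite{Pit2}, and finally the combinatorial summation using \eqref{countpairs}, \eqref{f(m)sim}, \eqref{sum_{mu}} and Stirling to extract the Riemann-sum constant $c$. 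The only minor slip is that the restriction $m\le m_n$ is not part of Lemma \ref{nu>nu_n} but comes from Corollary \ref{fruit}; otherwise your outline matches the paper step for step.
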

\noindent With extra work, we could have proved that  $\ex\bigl[\mathcal S_n^2\bigr]\gtrsim cn^{3/4}$, as
well. Since $\ex\bigl[\mathcal S_n^2\bigr]\gg \ex^2[\mathcal S_n]$, we cannot deduce that $\mathcal S_n\to\infty$ in probability, even though $\ex[\mathcal S_n]\to\infty$. We firmly believe that the argument itself may help to define  a subset  of stable partitions for which the two-moments approach
will work just fine. For now we are content to use the techniques above to prove a result that would have been out of reach if not for the analysis of $\ex\bigl[\mathcal S_n^2]$.  
\begin{Theorem}\label{mult} Let $q_n$ denote the fraction of members that have more than one
stable predecessor. Then $\ex[q_n]\lesssim 2ec\, n^{-1/4}$, so that with high probability almost all members have a unique stable predecessor.
\end{Theorem}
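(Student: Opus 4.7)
Write $N_n:=nq_n=\sum_{i\in[n]}\chi(\mathcal P_i\ge2)$, where $\mathcal P_i$ is the number of distinct stable predecessors of $i$. The plan is to bound $N_n$ by a weighted count of ordered pairs of stable partitions whose symmetric difference is a \emph{single} alternating circuit, and then recycle the analysis of Theorem \ref{E[Sn^2]sim}.

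The structural input is Tan's rotation theorem: if $\bold\Pi_a,\bold\Pi_b$ are stable with $\mu$ alternating circuits in their symmetric difference, swapping any subset of these circuits yields another stable partition. Consequently, whenever $\mathcal P_i\ge2$, one can pick any witness pair, let $C^{*}$ be the (unique) circuit containing $i$, and swap in $\bold\Pi_a$ every circuit \emph{other than} $C^{*}$ to match $\bold\Pi_b$; the result $(\bold\Pi_1,\bold\Pi_2)$ is a pair of stable partitions differing only on $C^{*}$, i.e.\ $\mu(\bold\Pi_1,\bold\Pi_2)=1$ and $i\in C^{*}$. Ordered union bound over such witness pairs gives
\[
\chi(\mathcal P_i\ge 2)\;\le\!\!\sum_{(\bold\Pi_1,\bold\Pi_2),\,\mu=1}\!\!\chi_1\chi_2\,\chi(i\in C),
\]
and since the single circuit has $2\nu$ vertices, summing over $i$ yields
\[
N_n\;\le\!\!\sum_{(\bold\Pi_1,\bold\Pi_2),\,\mu=1}\!\!2\nu\,\chi_1\chi_2,\qquad\ex[N_n]\;\le\!\!\sum_{(\bold\Pi_1,\bold\Pi_2),\,\mu=1}\!\!2\nu\,\pr(\bold\Pi_1,\bold\Pi_2).
\]

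Next I would evaluate the right-hand side by running the entire apparatus of Section 5: the same reductions $\mathcal C_1\supset\mathcal C_2\supset\mathcal C_3\supset\mathcal C_4$, the same sharp bound for $\pr_{\mathcal C_4}(\bold\Pi_1,\bold\Pi_2)$ from Lemma \ref{P*sim}, and the same enumeration of admissible configurations $(m,\nu,\mu)$ with $m,\nu\le m_n$, only now restricted to $\mu=1$ and weighted by $2\nu$. Where the $\ex[(\mathcal S_n)_2]$ computation used the asymptotic $\sum_\mu 2^{2\mu}f(2\nu,\mu)\sim e^{-1}(2\nu)!$ for the inner sum over circuit structures, the new inner expression is
\[
2\nu\cdot 2^{2}f(2\nu,1)\;=\;2\nu\cdot 2(2\nu-1)!\;=\;2\,(2\nu)!,
\]
larger by exactly the factor $2e$. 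Every other factor in the $(m,\nu)$-double sum is unchanged, so the resulting asymptotic equals $2e$ times the one in Theorem \ref{E[Sn^2]sim}, giving $\ex[N_n]\lesssim 2ec\,n^{3/4}$ and hence $\ex[q_n]=\ex[N_n]/n\lesssim 2ec\,n^{-1/4}$.

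The main obstacle is bureaucratic rather than conceptual: one must check that the cut-off Lemmas \ref{nu>nu_n}, \ref{sum(lor)}, \ref{C1-C2}, \ref{C2-C3}, \ref{C3-C4} and the final approximation of $G(\bold x,\bold y)$ on $\mathcal C_4$ all remain valid after the $\mu=1$ restriction and the $2\nu$-weighting. The check is mechanical because every error bound in Section 5 already carries an explicit $2^\mu$ factor that covers the $\mu=1$ sub-case, and the extra polynomial weight $2\nu\le 2m_n=O(n^{1/2}\log n)$ is harmlessly absorbed into the exponential decay $e^{-\Theta(\log^2 n)}$ present in every peripheral contribution (just as in the proof of Lemma \ref{nu>nu_n}).
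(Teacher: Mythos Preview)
Your proof is correct and follows essentially the same approach as the paper: reduce any witness pair of stable partitions to one with $\mu=1$ via the rotation/swap property, bound $N_n$ by the total length of the single alternating circuits over all such $\mu=1$ pairs, and then rerun the Section~5 machinery with $\sum_\mu 2^{2\mu}f(2\nu,\mu)$ replaced by $2\nu\cdot 2^{2}f(2\nu,1)=2(2\nu)!$, picking up exactly the factor $2e$ relative to $e^{-1}(2\nu)!$. Your write-up is in fact more explicit than the paper's about the reduction step and about why the peripheral error terms survive the extra $2\nu$ weight.
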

\begin{proof} It suffices to consider the members outside the odd cycles. If any such member has some two stable partners, it belongs to a cycle of even length $\ge 4$ formed by the alternating
pairs matched in the corresponding stable partitions  $\bold \Pi_1$ and $\bold\Pi_2$. Notice that selecting every other edge of those cycles, we get a stable partition. Therefore, without loss of generality
we can  assume that $\bold\Pi_1$ and $\bold\Pi_2$ form a unique cycle of even length $2\nu\ge 4$. It follows that $Q_n$, the total number of members with at least two stable partners, is below the total
length of the single cycles formed by these special pairs of stable partitions $\bold\Pi_1$ and $\bold\Pi_2$. The bound does look crude, but it works. 

To bound the total expected length of those cycles, we need to estimate $\sum
2\nu(\bold\Pi_1,\bold\Pi_2) P_{\mathcal C_4}(\bold\Pi_1,\bold\Pi_2)$.
For those pairs we have $\mu:=\mu(\bold\Pi_1,\bold\Pi_2)=1$, and $\sum_{\mu}2^{2\mu}f(2\nu,\mu)=2(2\nu-1)!$.
Therefore
\[
\ex[Q_n]\lesssim \sum_{\bold\Pi_1,\bold\Pi_2}2\nu(\bold\Pi_1,\bold\Pi_2) P_{\mathcal C_4}(\bold\Pi_1,\bold\Pi_2)\lesssim 2ec\, n^{3/4}.
\]
\end{proof}
{\bf Acknowledgment.\/}  Almost thirty years ago Don Knuth introduced me to his ground-breaking
work on random stable marriages. Don's ideas and techniques have been a source of inspiration
for me ever since. The masterful book by Dan Gusfield and Rob Irving encouraged me to continue working on stable matchings. I am very grateful to Rob for the chance to work with him on
the stable roommates problem back in $1994$, and for his encouragement these last
months. Itai Ashlagi, Peter Bir\'o, Jennifer Chayes, Gil Kalai, Yash Kanoria, Jacob Leshno and the 
recent monograph by David Manlove made me aware of a significant progress
 in theory and applications of two/one--sided stable matchings. I thank the organizers for a valuable
 opportunity to participate in MATCH-UP 2017 Conference.

\end{document}